\documentclass[pdflatex,sn-mathphys-num]{sn-jnl}% Math and Physical Sciences Numbered Reference Style
%%\documentclass[pdflatex,sn-mathphys-ay]{sn-jnl}% Math and Physical Sciences Author Year Reference Style
%%\documentclass[pdflatex,sn-aps]{sn-jnl}% American Physical Society (APS) Reference Style
%%\documentclass[pdflatex,sn-vancouver-num]{sn-jnl}% Vancouver Numbered Reference Style
%%\documentclass[pdflatex,sn-vancouver-ay]{sn-jnl}% Vancouver Author Year Reference Style
%%\documentclass[pdflatex,sn-apa]{sn-jnl}% APA Reference Style
%%\documentclass[pdflatex,sn-chicago]{sn-jnl}% Chicago-based Humanities Reference Style

%%%% Standard Packages
%%<additional latex packages if required can be included here>

\usepackage{graphicx}%
\usepackage{multirow}%
\usepackage{amsmath,amssymb,amsfonts}%
\usepackage{amsthm}%
\usepackage{mathrsfs}%
\usepackage[title]{appendix}%
\usepackage{xcolor}%
\usepackage{textcomp}%
\usepackage{manyfoot}%
\usepackage{booktabs}%
\usepackage{algorithm}%
\usepackage{algorithmicx}%
\usepackage{algpseudocode}%
\usepackage{listings}%
%%%%

%%%%%=============================================================================%%%%
%%%%  Remarks: This template is provided to aid authors with the preparation
%%%%  of original research articles intended for submission to journals published
%%%%  by Springer Nature. The guidance has been prepared in partnership with
%%%%  production teams to conform to Springer Nature technical requirements.
%%%%  Editorial and presentation requirements differ among journal portfolios and
%%%%  research disciplines. You may find sections in this template are irrelevant
%%%%  to your work and are empowered to omit any such section if allowed by the
%%%%  journal you intend to submit to. The submission guidelines and policies
%%%%  of the journal take precedence. A detailed User Manual is available in the
%%%%  template package for technical guidance.
%%%%%=============================================================================%%%%

%% as per the requirement new theorem styles can be included as shown below
\iffalse
\theoremstyle{thmstyleone}%
\newtheorem{theorem}{Theorem}%  meant for continuous numbers
%%\newtheorem{theorem}{Theorem}[section]% meant for sectionwise numbers
%% optional argument [theorem] produces theorem numbering sequence instead of independent numbers for Proposition
\newtheorem{proposition}[theorem]{Proposition}%

%%\newtheorem{proposition}{Proposition}% to get separate numbers for theorem and proposition etc.

\theoremstyle{thmstyletwo}%
\newtheorem{example}{Example}%
\newtheorem{remark}{Remark}%
\newtheorem{corollary}{Corollary}%

\theoremstyle{thmstylethree}%
\newtheorem{definition}{Definition}%
\newtheorem{lemma}{Lemma}%
\fi

\newtheorem{theorem}{Theorem}[section]
\newtheorem{lemma}[theorem]{Lemma}
\newtheorem{corollary}[theorem]{Corollary}
\newtheorem{proposition}[theorem]{Proposition}

\theoremstyle{definition}
\newtheorem{definition}[theorem]{Definition}
\newtheorem{example}[theorem]{Example}

\theoremstyle{remark}
\newtheorem{remark}[theorem]{Remark}

\raggedbottom
%%\unnumbered% uncomment this for unnumbered level heads

\begin{document}

\title[Article Title]{Cowen-Douglas operators on quaternionic Hilbert spaces}

%%=============================================================%%
%% GivenName	-> \fnm{Joergen W.}
%% Particle	-> \spfx{van der} -> surname prefix
%% FamilyName	-> \sur{Ploeg}
%% Suffix	-> \sfx{IV}
%% \author*[1,2]{\fnm{Joergen W.} \spfx{van der} \sur{Ploeg}
%%  \sfx{IV}}\email{iauthor@gmail.com}
%%=============================================================%%

\author[1]{\fnm{Xiaoqi} \sur{Feng}}\email{fengxiaoqi2011@qq.com}

\author*[1]{\fnm{Bingzhe} \sur{Hou}}\email{houbz@jlu.edu.cn}
\equalcont{These authors contributed equally to this work.}

\author[2]{\fnm{Kui} \sur{Ji}}\email{jikui@hebtu.edu.cn}
\equalcont{These authors contributed equally to this work.}

\affil[1]{\orgdiv{School of Mathematics}, \orgname{Jilin University}, \orgaddress{\street{Qianjin Street}, \city{Changchun}, \postcode{130012}, \country{China}}}

\affil[2]{\orgdiv{School of Mathematics}, \orgname{Hebei Normal University}, \orgaddress{\street{Nan'erhuan East Road}, \city{Shijiazhuang}, \postcode{050016}, \country{China}}}

%%==================================%%
%% Sample for unstructured abstract %%
%%==================================%%

\abstract{In 1978, M. J. Cowen and R. G. Douglas introduced a class of geometric operators  (known as Cowen-Douglas class of operators) and associated a Hermitian holomorphic vector bundle to such operators. They gave a complete set of unitary invariants in terms of the curves and its covariant derivatives. In this paper, after giving some basic properties of $S$-spectrum and right eigenvalues of bounded right linear operators on separable quaternionic Hilbert spaces, we generalize the class of Cowen-Douglas operators to the quaternionic Hilbert space via the $S$-spectrum and denote this class as $B_n^s(\Omega_q)$. Due to the lack of commutativity of quaternion multiplication, the quaternionic Cowen-Douglas operators are not trivial generalizations of the classical Cowen-Douglas operators. Each operator in $B_n^{s}(\Omega_q)$ corresponds to an $n$-dimensional Hermitian right holomorphic quaternionic vector bundle. We first establish a rigidity theorem for Hermitian right holomorphic quaternionic vector bundles. It is then proven that two operators in $B_n^{s}(\Omega_q)$ are quaternion unitarily equivalent if and only if the associate bundles are equivalent as Hermitian right holomorphic quaternionic vector bundles. In particular, we introduce canonical matrix representations of operators in $B_1^{s}(\Omega_q)$ and furthermore, we give the quaternion unitarily equivalent classification of $B_1^{s}(\Omega_q)$ by the canonical matrix representations. It is worth noting that curvature is a complete unitary invariant for the classical (complex) Cowen-Douglas operators, however, there exist two quaternionic Cowen-Douglas operators which have the same curvature but are not quaternion unitarily equivalent. In addition, we prove that the operators in $B_1^{s}(\Omega_q)$ are quaternion unitarily equivalent if and only if their complex representations are unitarily equivalent. Some relevant examples of the above results are also provided.}

\keywords{Cowen-Douglas operator, quaternionic Hilbert space, $S$-spectrum, unitary equivalence.}

%%\pacs[JEL Classification]{D8, H51}

\pacs[MSC Classification]{47B13, 47S05, 32L05}

\maketitle

\section{Introduction}\label{sec1}

The quaternions form a unital non-commutative division algebra and play an important role in quantum physics \cite{AF, FJSS, E, HB}. From the
mathematical point of view, one of the most important investigations in the quaternionic context is to find a convenient manner to express the
``analyticity" of functions with respect to quaternions. In 2007, G. Gentili and C. Struppa \cite{GS} introduced a concept of slice regularity for functions of one quaternionic variable, which led to a large development \cite{CSS, GP}. Another important investigation is the unitary representation of groups on quaternionic
Hilbert spaces \cite{FJS, NV}. In the case of finite dimension, quaternionic matrices have been widely studied \cite{L, W} (see F. Zhang's survey \cite{Z} for example), especially the right eigenvalue problems \cite{LS, LSS}. In the case of infinite dimension, the normal operators and unitary
operators on quaternionic separable Hilbert spaces were well studied, one can see \cite{CCKS, RK, SC, V} for more details.
In recent years, F. Colombo et al. developed a new theory of quaternionic functional calculus by  $S$-spectrum and $S$-resolvent set for quaternionic linear operators \cite{CSS}.

In 1978, M. Cowen and R. Douglas \cite{CD} introduced a class of geometric operators $B_n(\Omega)$ on a separable complex Hilbert space $\mathcal {H}$, known as Cowen-Douglas class of operators.

\begin{definition}[\cite{CD}]
	For $\Omega$ a connected open subset of $\mathbb{C}$ and $n$ a
	positive integer, let ${B}_{n}(\Omega)$ denotes the
	operators $T$ in $\mathcal {L}(\mathcal {H})$  satisfying:
	\begin{enumerate}
		\item[(a)] \ $\Omega \subseteq \sigma(T)=\{\omega \in \mathbb{C}: T-\omega \
	\text{not  invertible}\}$;
	
\item[(b)] \ ${\rm Ran}(T-\omega)=\mathcal {H} \ \text{for} \ \omega \ \text{in} \ \Omega$;
	
\item[(c)] \ $\bigvee {\rm Ker}_{\omega\in \Omega}(T-\omega)=\mathcal {H}$;
	
	\item[(d)] \ ${\rm dim \ Ker}(T-\omega)=n$ for $\omega$ in $\Omega$.
		\end{enumerate}
\end{definition}
It is well known that study of Cowen-Douglas operators is closely connected with  complex Hermitian geometry. In \cite{CD}, M. Cowen and R. Douglas investigated that
$B_{n}(\Omega)$ corresponds to holomorphic curves on a complex Hilbert space $\mathcal {H}$, which naturally induce $n$-dimensional Hermitian holomorphic vector bundles, they also
proved that the unitary classification of $B_n(\Omega)$ is related to the isometrically isomorphic classification of holomorphic curves.

Moreover, C. Jiang and K. Ji considered
the similarity classification of Cowen-Douglas operators in \cite{JGJ, JJ}. G. Misra \cite{M} studied the Cowen-Douglas operator class and its related topics. In recent years,  the
similarity invariant of Cowen-Douglas operators has been considered, one can see \cite{JJK, JJW}.  Furthermore, K. Ji et al considered the tuples of Cowen-Douglas operators \cite{HJJX, JJKX}.

In this article, the basic idea is to introduce an analogue of Cowen-Douglas operator on a separable quaternionic Hilbert spaces.  Denote by $\mathcal{H}_q$ a separable quaternionic Hilbert space and denote by $L_r(\mathcal{H}_q)$ the set of all the bounded right linear quaternionic operators $\mathcal{H}_q$.

\begin{definition}[\cite{CSS}]\label{SSP}
	Let $T\in L_r(\mathcal{H}_q)$. The set of all $S$-spectrum of $T$ is defined by
	\[
	\sigma_s(T)=\{s\in\mathbb{H};\ T^2-2\mathrm{Re}(s)T+|s|^2\boldsymbol{I}~\text{is not invertible}\}.
	\]
    where $I$ means the identity operator.
\end{definition}

For a subset $F\subseteq \mathcal{H}_q$,  we define
\[
\rm{span}_{\mathbb{H}}F:=\{\sum\limits_{\mathbf{a}\in A} \mathbf{a}q_{\mathbf{a}}; A\subseteq F \ \text{is a finite subset and} \ q_{\mathbf{a}}\in \mathbb{H}\},
\]
and
\begin{equation*}
	{\bigvee}_{\mathbb{H}}F:=\overline{\rm{span}_{\mathbb{H}}F}.
\end{equation*}

\begin{definition}\label{SPCD}
	For $\Omega_q$ a connected open subset  of $\mathbb{H}$ and $n$ a positive integer, let $B_n^{s}(\Omega_q)$ denote the operators in $L_r(\mathcal{H}_q)$  satisfy:
\begin{enumerate}
	\item[(a)] \ $\Omega_q\subseteq\sigma_{s}(T)$; \\
	\item[(b)] \ $\mathrm{ran}(T^2-2\mathrm{Re}(\omega) T+\vert\omega \vert^2)=\mathcal{H}_q$, for any $\omega\in\Omega_q$;\\
	\item[(c)] \ $\bigvee_{\mathbb{H}} \{\ker(T^2-2\mathrm{Re}(\omega) T+\vert\omega \vert^2);\ \omega\in\Omega_q\}=\mathcal{H}_q$;\\
	\item[(d)] \ $\dim_{\mathbb{H}} \ker(T^2-2\mathrm{Re}(\omega)T+\vert\omega \vert^2)=n$, for any $\omega\in\Omega_q$.
\end{enumerate}
\end{definition}

In order to study the generalization of Cowen-Douglas operators on quaternionc Hilbert spaces, we will investigate both geometry and operator theory on quaternionic Hilbert spaces.

In Section 2, we give some related notations and definitions of bounded right linear operators on quaternionic Hilbert spaces, and introduce the quaternion-valued right holomorphic functions on a domain (connected open subset) in $\mathbb{C}$.

In Section 3, we give the basic properties of $S$-point spectrum and right eigenvalues of bounded right linear operators on quaternionic Hilbert spaces. Then, we study some fundamental properties of Cowen-Douglas operators induced by $S$-point spectrum. We also show the backward unilateral weighted shift operators in the class of quaternionic Cowen-Douglas operators.

In Section 4, We give a rigidity theorem for Hermitian right holomorphic quaternionic vector bundles. Based on this theorem we further prove that two operators in $B_n^{s}(\Omega_q)$ are unitarily equivalent if and only if the associate bundles are equivalent as Hermitian right holomorphic quaternionic vector bundles.

In the last section, we introduce canonical matrix representations of operators in $B_1^{s}(\Omega_q)$. Based on the canonical matrix representations, we give the unitarily equivalent classification of $B_1^{s}(\Omega_q)$. In particular, we provide an example to show that curvature is not a complete unitary invariant,  which is different from the complex linear operators in $B_1(\Omega)$. In addition, we show that two  operators in $B_1^{s}(\Omega_q)$ are unitarily equivalent if and only if their complex representations are unitarily equivalent.

\section{Preliminaries}\label{sec2}

In this section, let us review some basic facts of quaterninonic numbers, quaterninonic Hilbert spaces, quaterninonic linear operators and quaterninon-valued holomorphic functions.

Denote the set of all quaternions by $\mathbb{H}$. Each element $a\in\mathbb{H}$ has the form
\[
a=a_0+a_1\boldsymbol{i}+a_2\boldsymbol{j}+a_3\boldsymbol{k},\ \ \ a_0, a_1, a_2, a_3\in\mathbb{R},
\]
where $\boldsymbol{i}^2=\boldsymbol{j}^2=\boldsymbol{k}^2=-1$, $\boldsymbol{i}\boldsymbol{j}=-\boldsymbol{j}\boldsymbol{i}=\boldsymbol{k}$,
$\boldsymbol{j}\boldsymbol{k}=-\boldsymbol{k}\boldsymbol{j}=\boldsymbol{i}$ and
$\boldsymbol{k}\boldsymbol{i}=-\boldsymbol{i}\boldsymbol{k}=\boldsymbol{j}$. Denote the real and image part of $a$  by $\mathrm{Re}(a)=a_0$ and $\mathrm{Im}(a)=a_1\boldsymbol{i}+a_2\boldsymbol{j}+a_3\boldsymbol{k}$, respectively. The modulus of $a$ is defined by
$|a|=\sqrt{a_0^2+a_1^2+a_2^2+a_3^2}$, and the conjugate of $a$ is defined by $\overline{a}=a_0-a_1\boldsymbol{i}-a_2\boldsymbol{j}-a_3\boldsymbol{k}$.

Notice that $\{1, \boldsymbol{i}, \boldsymbol{j}, \boldsymbol{k}\}$ is the standard basis of $\mathbb{H}$.
Denote by $\mathbb{S}$ the unit sphere of purely imaginary quaternions, i.e.,
\[
\mathbb{S}=\{a=a_1\boldsymbol{i}+a_2\boldsymbol{j}+a_3\boldsymbol{k}\in\mathbb{H};\ a_1^2+a_2^2+a_3^2=1\}.
\]
For any  $q\in\mathbb{H}\setminus\mathbb{R}$, define $I_q=\frac{\mathrm{Im}(q)}{|\mathrm{Im}(q)|}$, then $I_q\in\mathbb{S}$. Furthermore,
denote
\[
\mathbb{C}_{I_q}=\{a+I_qb; a,b\in \mathbb{R}\}.
\]
Then for any  $q\in\mathbb{H}\setminus\mathbb{R}$, $\mathbb{C}_{I_q}$ is isomorphic to $\mathbb{C}$ and $\mathbb{C}_{I_q}$ is just the multiplication commutant of $q$ in $\mathbb{H}$, i.e.,
\[
\mathbb{C}_{I_q}=\{p\in\mathbb{H}; pq=qp\}.
\]

For $p,\ q\in\mathbb{H}$, we say that $p$ and $q$ are axially symmetric,  denoted by $p\sim q$, if $|p|=|q|$ and $\mathrm{Re}(p)=\mathrm{Re}(q)$. Following from \cite{CGSS, HT}, one can see that $p\sim q$ if and only if there exists $0\neq \omega\in\mathbb{H}$ such that $q=\omega^{-1}p\omega$, or equivalently, there exists $\omega\in\mathbb{H}$ with $|\omega|=1$ such that $q=\overline{\omega}p\omega$. Moreover,
A subset $\Omega_q\subseteq\mathbb{H}$ is said to be an axially symmetric set if for every $x\in\Omega_q$, the set $\{y\in\mathbb{H};\ y\sim x\}$ is contained in $\Omega_q$.
Furthermore, we introduce the concept of axially symmetric reduced set for axially symmetric sets.

\begin{definition}
    Let $\Omega_q$ be a subset in $\mathbb{H}$. For any $\omega\in\Omega_q$,
    let $\omega_{red}=\mathrm{Re}(\omega)+\boldsymbol{i}|\mathrm{Im}(\omega)|$. Denoted by $\Omega_{red}$ the set of all $\omega_{red}$. We call $\Omega_{red}$ the axially symmetric reduced subset of $\Omega_q$.
\end{definition}
Throughout this paper, $\Omega$ is used to denote an open connected set in $\mathbb{C}$, $\Omega_q$ is used to denote an axially symmetric set of $\mathbb{H}$ and $\Omega_{red}$ is used to denote the axially symmetric reduced set of $\Omega_q$.

Now, let us review the quaternionic Hilbert spaces and quaternionic linear operators (we refer to \cite{CSS}). Because of the noncommutative multiplication of quaternions, the vector spaces (Hilbert spaces) over quaternion field $\mathbb{H}$ are sightly different from the vector spaces over complex field $\mathbb{C}$.

Since the noncommutative multiplications of quaternions, the results is signification differences between vector spaces over the quaternion field
$\mathbb{H}$ and those over the complex field. We therefore briefly recall some definitions and properties related to quaternionic Hilbert spaces and quaternionic linear operators (we refer to \cite{CSS}).

\begin{definition}
Let $X$ be a nonempty set. $X$ is called a right(left) linear quaternionic vector space if for any $x,\ y,\ z\in X$ and any $\alpha,\ \beta\in\mathbb{H}$, the following conditions hold.
\[
\begin{aligned}
(1) \ & x+y=y+x, \\
(2) \ & x+(y+z)=(x+y)+z, \\
(3) \ & \text{there is a unique zero element}\ 0\ \text{satisfying}\ x+0=x, \text{for any}\ x\in X,\\
(4) \ & \text{for every}\ x,\ \text{there is a unique element}\ -x\ \text{satisfying}\ x+(-x)=0, \\
(5) \ & (x+y)\alpha=x\alpha+y\alpha, (\alpha(x+y)=\alpha x+\alpha y) \\
(6) \ & x(\alpha+\beta)=x\alpha+x\beta, ((\alpha+\beta) x=\alpha x+\beta x) \\
(7) \ & (x\beta)\alpha=x(\beta\alpha), (\alpha(\beta x)=(\alpha\beta)x) \\
(8) \ & x1=x,(1x=x).
\end{aligned}
\]
\end{definition}

\begin{definition}
	Let $\mathcal{V}$ be a right linear quaternionic vector space. A quaternionic inner product on $\mathcal{V}$ is a map $\langle\cdot,\cdot\rangle_{\mathbb{H}}:\mathcal{V} \times \mathcal{V}\mapsto\mathbb{H}$ satisfying the following conditions,
	\begin{flalign}
& (1)\langle\boldsymbol{x},\boldsymbol{y}\rangle_\mathbb{H}=\overline{\langle\boldsymbol{y},\boldsymbol{x}\rangle_\mathbb{H}}, \nonumber & \\
& (2)\langle\boldsymbol{x}p+\boldsymbol{y}q,\boldsymbol{z}\rangle_\mathbb{H}=\langle\boldsymbol{x},\boldsymbol{z}\rangle_\mathbb{H}p+\langle\boldsymbol{y},\boldsymbol{z}\rangle_\mathbb{H}q, \nonumber & \\
& (3)\langle\boldsymbol{x},\boldsymbol{y}p+\boldsymbol{z}q\rangle_\mathbb{H}=\overline{p}\langle\boldsymbol{x},\boldsymbol{y}\rangle_\mathbb{H}+\overline{q}\langle\boldsymbol{x},\boldsymbol{z}\rangle_\mathbb{H}, \nonumber & \\
& (4)\langle\boldsymbol{x},\ \boldsymbol{x}\rangle_\mathbb{H}\geq0\mathrm{\ and\ }\langle\boldsymbol{x},\boldsymbol{x}\rangle_{\mathbb{H}}=0\ \text{if and only if}\ \boldsymbol{x}=0, \nonumber
\end{flalign}
for all $\boldsymbol{x},\ \boldsymbol{y},\ \boldsymbol{z}\in\mathcal{V}$ and $p,\ q\in\mathbb{H}$. Obviously,
$||\boldsymbol{x}||_{\mathbb{H}}=\sqrt{\langle\boldsymbol{x},\boldsymbol{x}\rangle_\mathbb{H}}$
is a norm on $\mathcal{V}$. Furthermore, if the right linear quaternionic vector space $\mathcal{V}$ together with a quaternionic inner product is a complete normed vector space,  $\mathcal{V}$ is called a quaternionic Hilbert space. The quaternionic Hilbert space is separable if it has countable orthonormal basis.
	\end{definition}

Throughout the rest of this article, we denote by $\mathcal{H}_q$ a separable quaternionic Hilbert space.

\begin{definition}
A map $T:\ \mathcal{H}_q \mapsto \mathcal{H}_q$ is said to be a right linear quaternionic operator if for any $\boldsymbol{u},\ \boldsymbol{v}\in \mathcal{H}_q $ and any $s\in\mathbb{H}$,
\[
T(\boldsymbol{u}+\boldsymbol{v})=T(\boldsymbol{u})+T(\boldsymbol{v}),\ T(\boldsymbol{u}s)=T(\boldsymbol{u})s.
\]
Similarly, a map $T:\ \mathcal{H}_q  \mapsto \mathcal{H}_q $ is said to be a left linear quaternionic operator if
\[
T(\boldsymbol{u}+\boldsymbol{v})=T(\boldsymbol{u})+T(\boldsymbol{v}),\ T(s\boldsymbol{u})=sT(\boldsymbol{u}),
\]
for any $\boldsymbol{u},\ \boldsymbol{v}\in \mathcal{H}_q $ and any $s\in\mathbb{H}$.
Moreover, we say that $T$ is bounded if
$$\mathrm{sup}_{\substack{||\boldsymbol{x}||= 1\\ \boldsymbol{x}\in \mathcal{H}_q}} ||T\boldsymbol{x}||\le \infty.$$
Denote by $L_r(\mathcal{H}_q)$ ($L_l(\mathcal{H}_q)$) the set of all the bounded right (left) linear quaternionic operators on a separable quaternionic Hilbert space
$\mathcal{H}_q$.
\end{definition}

In this paper, we only consider the right linear quaternionic operators.

Notice that each element $a$ in $\mathbb{H}$ can be written as the following form,
\[
a=(a_0+a_1\boldsymbol{i})+\boldsymbol{j}(a_2+a_3\boldsymbol{i}).
\]
Given any $\boldsymbol{x}=(x_1,\ x_2,\ x_3\ \cdots)^t\in\mathbb{H}_q$, where $\boldsymbol{x}^t$ means the transposition of $\boldsymbol{x}$. We could write
\[
\boldsymbol{x}=\boldsymbol{x}_1+\boldsymbol{j}\boldsymbol{x}_2,
\]
where $\boldsymbol{x}_1=(x_{11},\ x_{21},\ x_{31},\ \cdots)^t$ and $\boldsymbol{x}_2=(x_{12},\ x_{22},\ x_{32}\ \cdots)^t$ are complex vectors. We call
\[
\boldsymbol{x}_\mathbb{C}=\begin{pmatrix}
	\boldsymbol{x}_1 \\
	\boldsymbol{x}_2
\end{pmatrix}
\]
the complex representation of the vector $\boldsymbol{x}$. Furthermore, we write
\[
(\mathcal{H}_q)_{\mathbb{C}}=\{\boldsymbol{x}_\mathbb{C}; ~\boldsymbol{x}\in\mathcal{H}_q\}.
\]

For every infinite or finite dimensional quaternionic matrix $A$, we can write
$$
A=
\begin{pmatrix}
    a_{11} & a_{12} & \cdots & a_{1n} & \cdots \\
    a_{21} & a_{22} & \cdots & a_{2n} & \cdots \\
    a_{31} & a_{32} & \cdots & a_{3n} & \cdots \\
    \vdots & \vdots & \ddots & \vdots & \cdots \\
\end{pmatrix}
=\begin{pmatrix}
    b_{11} & b_{12} & \cdots & b_{1n} & \cdots \\
    b_{21} & b_{22} & \cdots & b_{2n} & \cdots \\
    b_{31} & b_{32} & \cdots & b_{3n} & \cdots \\
    \vdots & \vdots & \ddots & \vdots & \cdots \\
\end{pmatrix}
+\boldsymbol{j}
\begin{pmatrix}
    c_{11} & c_{12} & \cdots & c_{1n} & \cdots \\
    c_{21} & c_{22} & \cdots & c_{2n} & \cdots \\
    c_{31} & c_{32} & \cdots & c_{3n} & \cdots \\
    \vdots & \vdots & \ddots & \vdots & \cdots \\
\end{pmatrix}
$$
where $a_{ij}=b_{ij}+\boldsymbol{j}c_{ij}$, $b_{ij},\ c_{ij}\in\mathbb{C}$ for all $i,\ j\in\mathbb{N}$. Then, $A=A_1+\boldsymbol{j}A_2$, where
\[A_1=\begin{pmatrix}
    b_{11} & b_{12} & \cdots & b_{1n} & \cdots \\
    b_{21} & b_{22} & \cdots & b_{2n} & \cdots \\
    b_{31} & b_{32} & \cdots & b_{3n} & \cdots \\
    \vdots & \vdots & \ddots & \vdots & \cdots \\
\end{pmatrix}
\ \ \ \text{and} \ \ \ A_2=\begin{pmatrix}
    c_{11} & c_{12} & \cdots & c_{1n} & \cdots \\
    c_{21} & c_{22} & \cdots & c_{2n} & \cdots \\
    c_{31} & c_{32} & \cdots & c_{3n} & \cdots \\
    \vdots & \vdots & \ddots & \vdots & \cdots \\
\end{pmatrix}
\]
are complex matrices. Let
\[
A_{\mathbb{C}}=
\begin{pmatrix}
A_1 & -\overline{A_2} \\
A_2 & \overline{A_1}\\
\end{pmatrix}.
\]
We call $A_\mathbb{C}$ the complex representation of $A$. Notice that the complex representations preserve the algebraic structure of quaternionic matrices. More precisely, we have $(A+B)_\mathbb{C}=A_\mathbb{C}+B_\mathbb{C}$, $(AB)_\mathbb{C}=A_\mathbb{C}B_\mathbb{C}$ and $( A\lambda)_{\mathbb{C}}=A_\mathbb{C}(\lambda\mathbf{I})_\mathbb{C}$,
for any quaternionic matrices $A$ and $B$ and any $\lambda\in\mathbb{H}$. On the contrary, for a complex matrix $M$ with the form
\[
M=\begin{pmatrix}
M_1 & -\overline{M_2} \\
M_2 & \overline{M_1} \\
\end{pmatrix},
\]
we call $M_\mathbb{H}=M_1+\boldsymbol{j}M_2$ is the quaternionic representation of $M$ and
$(M_{\mathbb{H}})_{\mathbb{C}}=M$. Obviously, for any quaternionic matrix $A$, we have $(A_{\mathbb{C}})_{\mathbb{H}}=A$.

Let $T$ be a bounded right linear quaternionic operator in $L_r(\mathcal{H}_q)$ and let $\{e_n\}_{n=1}^\infty$ be an orthonormal basis  of $\mathcal{H}_q$. Then, the matrix representation
of $T$ under $\{e_n\}_{n=1}^\infty$ is a quaternionic matrix, and we obtain the complex matrix representation $T_{\mathbb{C}}$ of $T$. Then, we also use $T_{\mathbb{C}}$ to denote the complex linear operator acting on $(\mathcal{H}_q)_{\mathbb{C}}$. Moreover, for any  $T\in L_r(\mathcal{H}_q)$ and any
$\boldsymbol{x}\in\mathbb{H}_q$, we have $(T\boldsymbol{x})_{\mathbb{C}}=T_{\mathbb{C}}\boldsymbol{x}_{\mathbb{C}}$.

\begin{definition}
Let $T_1$ and $T_2$ be two bounded right linear quaternionic operators in $L_r(\mathcal{H}_q)$. We say that $T_1$ is quaternion similar to $T_2$ if there exists an invertible right linear quaternionic operator $Q$ such that
$QT_1=T_2Q$. In particular, if $Q^*Q=QQ^*=\boldsymbol{I}$, where $\boldsymbol{I}$ means the identity operator on $\mathcal{H}_q$, we say that $T_1$ is quaternion unitarily equivalent to $T_2$.
\end{definition}

\begin{definition}\label{LRESP}
Let $T$ be a right linear quaternionic operator in $L_r(\mathcal{H}_q)$. The set of all right eigenvalues of $T$ is defined by
\[
\sigma_r(T)=\{\omega\in\mathbb{H};\ \text{there exists a nonzero vector}\ \boldsymbol{x}\ \text{such that }\ T\boldsymbol{x}=\boldsymbol{x}\omega\}.
\]
Here, $\boldsymbol{x}$ is called the right eigenvector of $T$ with respect to $\omega$.
Similarly, the set of all left eigenvalues of $T$ is defined by
\[
\sigma_l(T)=\{\lambda\in\mathbb{H};\ \text{there exists a nonzero vector}\ \boldsymbol{y}\ \text{such that }\ T\boldsymbol{y}=\lambda\boldsymbol{y}\}.
\]
Here, $\boldsymbol{y}$ is called the left eigenvector of $T$ with respect to $\lambda$.
\end{definition}
For convenience, we use $T-\mathbf{I}\omega$ to denote the map defined by,
\[
(T-\mathbf{I}\omega)\boldsymbol{x}=T\boldsymbol{x}-\boldsymbol{x}\omega, \ \ \  \  \text{for any} \ \boldsymbol{x}\in\mathcal{H}_q.
\]
Similarly, we use $T-\lambda\mathbf{I}$ to denote the map defined by ,
\[
(T-\lambda\mathbf{I})\boldsymbol{x}=T\boldsymbol{x}-\lambda\boldsymbol{x}, \ \ \  \  \text{for any} \ \boldsymbol{x}\in\mathcal{H}_q.
\]
Notice that $T-\lambda\mathbf{I}$ is a right linear quaternionic operator, but not is $T-\mathbf{I}\omega$ if $\omega\notin\mathbb{R}$.

In the study of noncommutative functional calculus for quaternionic operators, the $S$-spectrum plays an important role (see \cite{CSS} for example).
Now let us introduce some conclusions of  $S$-spectrum.

\iffalse
\begin{definition}[\cite{CSS}]\label{Riesz basis sequenceR}
	Let $T\in L_r(\mathcal{H}_q)$. The set of all $S$-spectrum of $T$ is defined by
\[
\sigma_s(T)=\{s\in\mathbb{H};\ T^2-2\mathrm{Re}(s)T+|s|^2\boldsymbol{I}~\text{is not invertible}\}.
\]
The $S$-spectrum radius of $T$ is defined by
\[
r_s(T):=\sup\{|s|;\ s\in\sigma_s(T)\}.
\]
\end{definition}
\fi

\begin{theorem}[$S$-spectrum radius formula \cite{CGSS}]\label{SMT}
	Let $T$ be a bounded right linear quaternionic operator in $L_r(\mathcal{H}_q)$. Then
	\[
	r_s(T)=\lim_{m\mapsto\infty}\Vert T^m \Vert^{{1}/{m}}.
	\]
\end{theorem}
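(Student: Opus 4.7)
The plan is to adapt the classical Gelfand spectral-radius argument to the quaternionic $S$-spectral setting. Set $L := \lim_m \|T^m\|^{1/m}$; submultiplicativity $\|T^{m+n}\| \le \|T^m\|\|T^n\|$ makes $\log\|T^m\|$ subadditive, so Fekete's lemma guarantees this limit exists and in fact equals $\inf_m \|T^m\|^{1/m} \in [0,\|T\|]$. The remaining task is to establish $r_s(T) = L$, which splits into two opposite inequalities.

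For the easier inequality $r_s(T) \le L$, I would fix $s\in\mathbb{H}$ with $|s| > L$ and introduce the left quaternionic Neumann-type series
\[
\Sigma(s) := \sum_{n=0}^{\infty} T^n s^{-1-n}.
\]
Since $\limsup_n \|T^n\|^{1/n} = L < |s|$ the series converges absolutely in operator norm; because all powers of a single quaternion $s$ commute among themselves, no ordering ambiguity arises on the right factor. A short telescoping manipulation identifies $\Sigma(s)$ with the left $S$-resolvent and delivers an explicit inverse for $T^2 - 2\mathrm{Re}(s)T + |s|^2 I$, whence $s\notin\sigma_s(T)$ and therefore $r_s(T) \le L$.

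The reverse inequality $L \le r_s(T)$ is the main obstacle, and is where the quaternionic setting genuinely departs from the complex one. The strategy is to exploit the slice hyperholomorphic functional calculus. The left $S$-resolvent operator
\[
S_L^{-1}(s,T) := -(T^2 - 2\mathrm{Re}(s)T + |s|^2 I)^{-1}(T - \bar s I)
\]
is slice hyperholomorphic in $s$ on $\mathbb{H}\setminus\sigma_s(T)$ and coincides with $\Sigma(s)$ on the region of convergence of the latter. Fix $r > r_s(T)$ and any imaginary unit $I\in\mathbb{S}$, and apply the slice Cauchy integral formula on the circle $|s|=r$ inside $\mathbb{C}_I$ to recover
\[
T^n = \frac{1}{2\pi}\int_{|s|=r,\,s\in\mathbb{C}_I} S_L^{-1}(s,T)\, ds_I\, s^n, \qquad ds_I = -I\,ds.
\]
Estimating the integrand in operator norm yields $\|T^n\| \le r^{n+1}\sup_{|s|=r}\|S_L^{-1}(s,T)\|$, hence $L\le r$; letting $r\searrow r_s(T)$ completes the proof. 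The substantive work lies in verifying that $S_L^{-1}(s,T)$ is slice hyperholomorphic off $\sigma_s(T)$, that it is uniformly bounded on $\{|s|=r\}$ (requiring compactness and nonemptiness of $\sigma_s(T)$, a quaternionic analogue of Liouville's theorem applied to the resolvent), and that the operator-valued slice Cauchy formula is legitimate—ingredients that are standard in the noncommutative functional calculus developed in \cite{CSS, CGSS} but which replace the ordinary Cauchy integral of the scalar resolvent used in Gelfand's classical proof.
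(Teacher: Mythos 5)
The paper offers no proof to compare against: Theorem \ref{SMT} is stated as a known result imported from \cite{CGSS} (see also \cite{CSS}), so your task was in effect to reconstruct the literature proof, and your outline does so correctly. For $r_s(T)\le L$ the idea is right, but the ``short telescoping manipulation'' should be made concrete: the telescoping identity $\Sigma(s)s-T\Sigma(s)=\boldsymbol{I}$ by itself does not immediately invert $Q_s(T):=T^2-2\mathrm{Re}(s)T+|s|^2\boldsymbol{I}$. The clean route is to form $\bigl(\Sigma(\bar{s})-\Sigma(s)\bigr)(s-\bar{s})^{-1}=\sum_{n\ge0}T^n c_n$ with $c_n=(\bar{s}^{-n-1}-s^{-n-1})(s-\bar{s})^{-1}$, observe that each $c_n$ is \emph{real} (it is fixed by conjugation in $\mathbb{C}_{I_s}$), so that no ordering ambiguities arise, and then deduce $Q_s(T)\sum_n T^nc_n=\boldsymbol{I}$ from the scalar partial-fraction expansion of $\bigl((x-s)(x-\bar{s})\bigr)^{-1}$, with the derivative series handling the case $s\in\mathbb{R}$; convergence for $|s|>L$ is exactly the root test you invoke. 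For $L\le r_s(T)$ your argument is the standard one and is sound, but it is only as strong as the three imported ingredients you name --- nonemptiness and compactness of $\sigma_s(T)$, slice hyperholomorphy of $S_L^{-1}(\cdot,T)$ off the $S$-spectrum, and the operator-valued slice Cauchy formula applied to the intrinsic functions $s\mapsto s^n$ --- all of which are established in \cite{CSS}, so the appeal is legitimate; with them the bound $\Vert T^n\Vert\le Mr^{n+1}$ and the limit $r\searrow r_s(T)$ finish the proof as you say.
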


Furthermore, we could define $S$-point spectrum as follows.

\begin{definition}
Let $T\in L_r(\mathcal{H}_q)$. The set of all $S$-point spectrum is defined by
	$$\sigma_{sp}(T)=\{s\in\mathbb{H};\ \exists\ 0\neq\boldsymbol{v}\in\mathcal{H}_q \ \text{such that}\ (T^2-2\mathrm{Re}(s)T+|s|^2\boldsymbol{I})\boldsymbol{v}=0\},$$
where $\boldsymbol{v}$ is called the $S$-eigenvector of $T$ with respect to $s$.
The $S$-point spectrum radius of $T$ is defined by
\[
r_{sp}(T):=\sup\{|s|;\ s\in\sigma_{sp}(T)\}.
\]
\end{definition}

In this paper, we will use quaternion-valued holomorphic functions on a connected open set in the (upper-half) complex plane. More about generalized quaternion-valued holomorphic functions, we also refer to \cite{CSS}.
\begin{definition}
Let $\Omega$ be a connected open set in $\mathbb{C}$. We say that a quaternion-valued function $f:\ \Omega\mapsto\mathbb{H}$ is right holomorphic if the following limit exists,
\[
\lim_{\Delta z\rightarrow0}(f(z+\Delta z)-f(z))(\Delta z)^{-1}.
\]
\end{definition}

\begin{remark}
Let $\Omega$ be a connected open set in $\mathbb{C}$ and let $f$ be a real differentiable quaternion-valued function from $\Omega$ to $\mathbb{H}$. Write $z=x+\boldsymbol{i}y\in\mathbb{C}$. Define
\[
\frac{\partial f}{\partial z}=\frac{1}{2}\left(\frac{\partial f}{\partial x}-\frac{\partial f}{\partial y}\cdot\boldsymbol{i} \right) \ \ \text{and} \ \ \
\frac{\partial f}{\partial \bar{z}}=\frac{1}{2}\left(\frac{\partial f}{\partial x}+\frac{\partial f}{\partial y}\cdot\boldsymbol{i} \right).
\]
Then the following  statements are equivalent.
\begin{enumerate}
	\item[(1)] \ $f$ is right holomorphic.
	\item[(2)] \ For any $z_0\in \Omega$, $f(z)$ has a Taylor expansion on a neighbourhood of $z_0$, i.e.,
 $f(z)=\sum_{n=0}^{\infty}\alpha_n(z-z_0)^n$,  where  $\alpha_n\in\mathbb{H}$.
	\item[(3)] \  $f(z)=f_1(z)+\boldsymbol{j}f_2(z)$, where $f_1(z)$ and $f_2(z)$ are complex-valued holomorphic functions.
	\item[(4)] \  $\frac{\partial f}{\partial \bar{z}}\equiv 0$.
\end{enumerate}
Moreover, we could also consider right holomorphic quaternionic vector-valued functions and their equivalent definitions in a similar way.
\end{remark}

\section{Cowen-Douglas operators on a quaternionic Hilbert space}\label{sec3}

In this section, we will study some fundamental properties of  Cowen-Douglas operators on a quaternionic Hilbert space. Some examples are also shown.

\subsection{$S$-point spectra and right eigenvalues}

\begin{lemma}\label{SpR}
Let $T \in L_r(\mathcal{H}_q)$. Then
\begin{enumerate}
	\item[(1)]  Both $\sigma_s(T)$ and $\sigma_{sp}(T)$ are axially symmetric sets. For any $\omega\in\sigma_{sp}(T)$, $\ker(T^2-2\mathrm{Re}(\omega)T+\vert\omega \vert^2)$ is a right linear quaternionic vector space and if $\omega\sim\omega'$, then $\ker(T^2-2\mathrm{Re}(\omega)T+\vert\omega \vert^2)=\ker(T^2-2\mathrm{Re}(\omega')T+\vert\omega' \vert^2)$.
	\item[(2)]  $\sigma_r(T)$ is an axially symmetric set. For any $\lambda\in\sigma_r(T)$, $\ker(T-\mathbf{I}\lambda)$ is a right linear vector space over the field $\mathbb{C}_{I_\lambda}$. Moreover, for any $0\neq q\in\mathbb{H}$, $\ker(T-\boldsymbol{I}q^{-1}\omega q)=\{\boldsymbol{x}q; \boldsymbol{x}\in\ker(T-\boldsymbol{I}\omega)\}$.
	\end{enumerate}
\end{lemma}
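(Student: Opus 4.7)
The plan is to treat parts (1) and (2) separately, observing that part (1) is essentially algebraic (relying only on the fact that the relevant operator is a real polynomial in $T$), while part (2) requires careful bookkeeping with quaternionic non-commutativity.

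For part (1), my starting observation is that the operator $P_\omega := T^2 - 2\mathrm{Re}(\omega)T + |\omega|^2\boldsymbol{I}$ depends on $\omega$ only through $\mathrm{Re}(\omega)$ and $|\omega|$. Since axial symmetry $\omega\sim\omega'$ is \emph{defined} by $\mathrm{Re}(\omega)=\mathrm{Re}(\omega')$ and $|\omega|=|\omega'|$, we immediately get $P_\omega=P_{\omega'}$. This single identity yields three of the four claims at once: axial symmetry of $\sigma_s(T)$ (invertibility is preserved), axial symmetry of $\sigma_{sp}(T)$ (existence of a nonzero vector in the kernel is preserved), and equality of the kernels for equivalent $\omega,\omega'$. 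For the right $\mathbb{H}$-linearity of $\ker P_\omega$, I would note that $P_\omega$ is a polynomial in $T$ with \emph{real} coefficients; since real scalars lie in the center of $\mathbb{H}$, every such polynomial remains a right $\mathbb{H}$-linear operator, so its kernel is a right quaternionic subspace.

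For part (2), I would first use the characterization recalled in Section~2: $p\sim\omega$ if and only if $p=q^{-1}\omega q$ for some $0\neq q\in\mathbb{H}$. Given $\boldsymbol{x}\in\ker(T-\boldsymbol{I}\omega)$, the right linearity of $T$ gives the one-line computation
\[
T(\boldsymbol{x}q) = (T\boldsymbol{x})q = (\boldsymbol{x}\omega)q = (\boldsymbol{x}q)(q^{-1}\omega q),
\]
so $\boldsymbol{x}q\in\ker(T-\boldsymbol{I}q^{-1}\omega q)$. This simultaneously delivers the axial symmetry of $\sigma_r(T)$ and the inclusion $\{\boldsymbol{x}q:\boldsymbol{x}\in\ker(T-\boldsymbol{I}\omega)\}\subseteq\ker(T-\boldsymbol{I}q^{-1}\omega q)$. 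The reverse inclusion follows by symmetry: replacing $(\omega,q)$ by $(q^{-1}\omega q,q^{-1})$ sends any $\boldsymbol{y}$ in the right-hand kernel to $\boldsymbol{y}q^{-1}\in\ker(T-\boldsymbol{I}\omega)$.

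For the module structure of $\ker(T-\boldsymbol{I}\lambda)$, the key point is that the relation $T\boldsymbol{x}=\boldsymbol{x}\lambda$ is only preserved under right multiplication by scalars that commute with $\lambda$. Concretely, for $p\in\mathbb{C}_{I_\lambda}$ one has $\lambda p=p\lambda$, so $T(\boldsymbol{x}p)=\boldsymbol{x}\lambda p=(\boldsymbol{x}p)\lambda$, which gives $\mathbb{C}_{I_\lambda}$-linearity. I would emphasize that this is sharp: the multiplication commutant of $\lambda$ in $\mathbb{H}$ is precisely $\mathbb{C}_{I_\lambda}$ (a fact already recalled in Section~2), so $\ker(T-\boldsymbol{I}\lambda)$ cannot in general be made a right $\mathbb{H}$-module — this is the only real subtlety in the lemma, and the main place where quaternionic non-commutativity forces a genuine departure from the complex picture. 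Everything else reduces to one-line computations, so I do not expect any serious obstacle.
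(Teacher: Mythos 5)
Your proposal is correct and follows essentially the same route as the paper's own proof: both rest on the observation that $T^2-2\mathrm{Re}(\omega)T+|\omega|^2$ depends only on $\mathrm{Re}(\omega)$ and $|\omega|$ and is a right-linear real polynomial in $T$, together with the one-line computations $T(\boldsymbol{x}p)=\boldsymbol{x}p\omega$ for $p\in\mathbb{C}_{I_\omega}$ and $T(\boldsymbol{x}q)=(\boldsymbol{x}q)(q^{-1}\omega q)$. If anything, yours is slightly more complete, since you make the reverse inclusion $\ker(T-\boldsymbol{I}q^{-1}\omega q)\subseteq\{\boldsymbol{x}q;\ \boldsymbol{x}\in\ker(T-\boldsymbol{I}\omega)\}$ explicit via the substitution $(\omega,q)\mapsto(q^{-1}\omega q,\,q^{-1})$, a step the paper leaves implicit.
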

    \begin{proof}
  (1) Notice that $\omega\sim\omega'$ if and only if $\vert\omega\vert=\vert\omega'\vert$ and $\rm{Re}(\omega)=\rm{Re}(\omega')$. It follows from Definition \ref{LRESP} that $\sigma_s(T)$ is an axially symmetric set. Now, suppose that $\omega\in\sigma_{sp}(T)$.
  Let $\boldsymbol{x}$ be an $S$-eigenvector of $T$ respect to $S$-point spectrum $\omega$, i.e.,
  \[
  (T^2-2\mathrm{Re}(\omega)T+|\omega|^2)\boldsymbol{x}=0.
  \]
  For every $p\in\mathbb{H}$, we have
  \[
  (T^2-2\mathrm{Re}(s)T+|s|^2)\boldsymbol{y}p=0.
  \]
  Then, $\ker(T^2-2\mathrm{Re}(\omega)T+\vert\omega \vert^2)$ is a right linear quaternionic vector space.

  Moreover, if $\omega\sim\omega'$, then it follows from $\vert\omega\vert=\vert\omega'\vert$ and $\rm{Re}(\omega)=\rm{Re}(\omega')$ that
  \[
  \ker(T^2-2\mathrm{Re}(\omega)T+\vert\omega \vert^2)=\ker(T^2-2\mathrm{Re}(\omega')T+\vert\omega' \vert^2).
  \]

  (2) Give any $\omega\in\sigma_r(T)$. Let $\boldsymbol{x}$ be the right eigenvector of $T$ respect to right eigenvalue $\omega$, i.e., $T\boldsymbol{x}=\boldsymbol{x}\omega$.
  Then for any $p\in\mathbb{C}_{I\omega}$, we have
  \[
  T\boldsymbol{x}p=\boldsymbol{x}\omega p=\boldsymbol{x}p\omega.
  \]
  So $\ker(T-\mathbf{I}\omega)$ is a right linear vector space over the field $\mathbb{C}_{I_\lambda}$.

  Moreover, for any $0\neq q\in\mathbb{H}$, we have
  \[
  T(\boldsymbol{x}q)=T(\boldsymbol{x})q=\boldsymbol{x}\omega q=\boldsymbol{x}q(q^{-1}\omega q).
  \]
  Therefore,
  \[
  \ker(T-\boldsymbol{I}q^{-1}\omega q)=\{\boldsymbol{x}q; \boldsymbol{x}\in\ker(T-\boldsymbol{I}\omega)\}.
  \]
    \end{proof}

\begin{lemma}
	Let $T_1$ and $T_2$ be two bounded right linear operators in $L_r(\mathcal{H}_q)$. If $T_1$ is quaternionic similar to $T_2$, then
    $\sigma_{sp}(T_1)=\sigma_{sp}(T_2)$ and $\sigma_r(T_1)=\sigma_r(T_2)$.
	\end{lemma}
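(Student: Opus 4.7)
The plan is to use the defining relation $QT_1 = T_2 Q$ (equivalently $T_2 = QT_1Q^{-1}$) to transport $S$-eigenvectors and right eigenvectors between $T_1$ and $T_2$. Two facts make the bookkeeping clean: $Q$ is right linear quaternionic and invertible, and both $2\mathrm{Re}(s)$ and $|s|^2$ are real scalars, so they commute through $Q$.

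First, for the $S$-point spectrum, I would start from $QT_1 = T_2 Q$ and compute
\[
Q(T_1^2 - 2\mathrm{Re}(s)T_1 + |s|^2\boldsymbol{I}) = (T_2^2 - 2\mathrm{Re}(s)T_2 + |s|^2\boldsymbol{I})Q,
\]
using $QT_1^2 = T_2 Q T_1 = T_2^2 Q$ and the fact that real scalars commute with $Q$ (so $Q(2\mathrm{Re}(s)T_1) = 2\mathrm{Re}(s)T_2 Q$ and $Q|s|^2 = |s|^2 Q$). Given $s \in \sigma_{sp}(T_1)$ with nonzero $S$-eigenvector $\boldsymbol{v}$, apply $Q$ to the identity $(T_1^2-2\mathrm{Re}(s)T_1+|s|^2\boldsymbol{I})\boldsymbol{v}=0$ to obtain $(T_2^2-2\mathrm{Re}(s)T_2+|s|^2\boldsymbol{I})(Q\boldsymbol{v})=0$, with $Q\boldsymbol{v}\neq 0$ since $Q$ is invertible; hence $s\in \sigma_{sp}(T_2)$. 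The reverse inclusion follows identically using $Q^{-1}$ in place of $Q$, since $Q^{-1}T_2 = T_1 Q^{-1}$.

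For the right spectrum, I would argue directly from the definition. If $\lambda \in \sigma_r(T_1)$ with nonzero right eigenvector $\boldsymbol{x}$ (so $T_1\boldsymbol{x}=\boldsymbol{x}\lambda$), then right linearity of $Q$ gives
\[
T_2(Q\boldsymbol{x}) = QT_1\boldsymbol{x} = Q(\boldsymbol{x}\lambda) = (Q\boldsymbol{x})\lambda,
\]
so $Q\boldsymbol{x}\neq 0$ witnesses $\lambda \in \sigma_r(T_2)$. The opposite inclusion is symmetric using $Q^{-1}$.

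There is essentially no obstacle here, but one subtle point worth flagging in the write-up is why the scalar factors pass through $Q$: this works only because the coefficients $2\mathrm{Re}(s)$ and $|s|^2$ are real, hence central in $\mathbb{H}$, so right linearity of $Q$ suffices. If one were tempted to write an analogous identity with a nonreal coefficient (for instance $T_1 - \boldsymbol{I}\omega$ for $\omega\notin\mathbb{R}$, which is not a right linear operator as noted earlier in the paper), the argument would break; this is precisely why the statement is phrased for $\sigma_{sp}$ and $\sigma_r$ rather than for any naive pointwise spectrum.
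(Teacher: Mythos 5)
Your proposal is correct and follows essentially the same route as the paper's proof: both conjugate the operator polynomial $T^2-2\mathrm{Re}(s)T+|s|^2\boldsymbol{I}$ by the intertwiner and push the ($S$-)eigenvector through, using invertibility to keep it nonzero, and both handle $\sigma_r$ by the same direct transport of right eigenvectors. Your added remark about why the real coefficients pass through $Q$ is a sensible clarification but does not change the argument.
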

	
	\begin{proof}
	Let $Q$ be an invertible operator in $L_r(\mathcal{H}_q)$ such that $Q^{-1}T_1Q=T_2$.
	
	For any $s\in\sigma_{sp}(T_1)$, let	 $\boldsymbol{v}$ be a nonzero vector
	such that $(T_1^2-2\mathrm{Re}(s)T_1+|s|^2)\boldsymbol{v}=0$. Then,
	\[
	\begin{aligned}
0&=Q^{-1}(T_1^2-2\mathrm{Re}(s)T_1+|s|^2)\boldsymbol{v}\\
&=Q^{-1}(T_1^2-2\mathrm{Re}(s)T_1+|s|^2)QQ^{-1}\boldsymbol{v}\\
&=(Q^{-1}T_1QQ^{-1}T_1Q-2\mathrm{Re}(s)Q^{-1}T_1Q+|s|^2)Q^{-1}\boldsymbol{v}\\
&=((Q^{-1}T_1Q)^2-2\mathrm{Re}(s)Q^{-1}T_1Q+|s|^2)Q^{-1}\boldsymbol{v} \\
&=(T_2^2-2\mathrm{Re}(s)T_2+|s|^2)(Q^{-1}\boldsymbol{v}).
\end{aligned}
\]
This implies $\sigma_{sp}(T_1)\subseteq\sigma_{sp}(T_2)$. In the same way, one can see that $\sigma_{sp}(T_2)\subseteq\sigma_{sp}(T_1)$.
Therefore, we have $\sigma_{sp}(T_1)=\sigma_{sp}(T_2)$.

For any $\omega\in\sigma_r(T_1)$, let $\boldsymbol{v}$ be a nonzero vector such that $T_1\boldsymbol{x}=\boldsymbol{x}\omega$. Then,
\[
(Q^{-1}\boldsymbol{x})\omega=Q^{-1}\boldsymbol{x}\omega=Q^{-1}(T_1\boldsymbol{x})=Q^{-1}T_1QQ^{-1}\boldsymbol{x}
=T_2(Q^{-1}\boldsymbol{x}).
\]
This implies  $\sigma_{r}(T_1)\subseteq\sigma_{r}(T_2)$. In the same way, one can see that $\sigma_{r}(T_2)\subseteq\sigma_{r}(T_1)$.
Therefore, we have $\sigma_{r}(T_1)=\sigma_{r}(T_2)$.
	\end{proof}

	\begin{theorem}\label{RSPE}
	Let $T\in L_r(\mathcal{H}_q)$. For any $s\in\mathbb{H}$,
	\[
	T^2-2\mathrm{Re}(s)T+|s|^2=(T-\mathbf{I}s)(T-\mathbf{I}\overline{s}).
	\]
	Furthermore, $\sigma_r(T)=\sigma_{sp}(T)$. Moreover, for any $\omega\in\sigma_{sp}(T)\setminus\mathbb{R}=\sigma_r(T)\setminus\mathbb{R}$, we have
\[
\ker(T^2-2\mathrm{Re}(\omega)T+|\omega|^2)=\bigvee_{\mathbb{H}} \ker(T-\boldsymbol{I}\omega).
\]
	\end{theorem}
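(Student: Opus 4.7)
The plan is to handle the three assertions in sequence, using the first one to unlock the second and third.

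First, I would verify the factorization $T^2-2\mathrm{Re}(s)T+|s|^2=(T-\mathbf{I}s)(T-\mathbf{I}\bar s)$ by a direct computation on an arbitrary $\boldsymbol{v}\in\mathcal{H}_q$. Expanding $(T-\mathbf{I}s)(T-\mathbf{I}\bar s)\boldsymbol{v}=T(T\boldsymbol{v}-\boldsymbol{v}\bar s)-(T\boldsymbol{v}-\boldsymbol{v}\bar s)s$, the right linearity of $T$ gives $T(\boldsymbol{v}\bar s)=T(\boldsymbol{v})\bar s$; then combining $\bar s+s=2\mathrm{Re}(s)$ (which commutes through since it is real) with $\bar s s=|s|^2$ yields the claim. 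The same computation with $s$ and $\bar s$ interchanged shows that both orderings $(T-\mathbf{I}s)(T-\mathbf{I}\bar s)$ and $(T-\mathbf{I}\bar s)(T-\mathbf{I}s)$ produce the same operator, a remark I will need below.

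Next, for the equality $\sigma_r(T)=\sigma_{sp}(T)$, the inclusion $\sigma_r(T)\subseteq\sigma_{sp}(T)$ is a one-line consequence of the definition: if $T\boldsymbol{x}=\boldsymbol{x}\omega$, then $(T^2-2\mathrm{Re}(\omega)T+|\omega|^2)\boldsymbol{x}=\boldsymbol{x}(\omega^2-(\omega+\bar\omega)\omega+\omega\bar\omega)=0$. For the reverse inclusion I would use the factorization: given $0\ne\boldsymbol{v}$ with $(T-\mathbf{I}\omega)(T-\mathbf{I}\bar\omega)\boldsymbol{v}=0$, set $\boldsymbol{w}=(T-\mathbf{I}\bar\omega)\boldsymbol{v}$. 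If $\boldsymbol{w}\ne 0$, then $T\boldsymbol{w}=\boldsymbol{w}\omega$ exhibits $\omega\in\sigma_r(T)$ directly. If $\boldsymbol{w}=0$, then $T\boldsymbol{v}=\boldsymbol{v}\bar\omega$ so $\bar\omega\in\sigma_r(T)$, and since $\bar\omega\sim\omega$ the axial symmetry of $\sigma_r(T)$ proved in Lemma~\ref{SpR}(2) returns $\omega\in\sigma_r(T)$.

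Finally, for the kernel identity, the inclusion $\supseteq$ is immediate: $\ker(T^2-2\mathrm{Re}(\omega)T+|\omega|^2)$ is a closed right $\mathbb{H}$-submodule by Lemma~\ref{SpR}(1), and it already contains $\ker(T-\mathbf{I}\omega)$ by the computation above. For the nontrivial inclusion $\subseteq$, given $\boldsymbol{v}$ in the kernel, I would put
\[
\boldsymbol{u}=(T-\mathbf{I}\bar\omega)\boldsymbol{v},\qquad \boldsymbol{w}=(T-\mathbf{I}\omega)\boldsymbol{v},
\]
so that $\boldsymbol{u}\in\ker(T-\mathbf{I}\omega)$ and $\boldsymbol{w}\in\ker(T-\mathbf{I}\bar\omega)$ by the commuting-factorization remark. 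Subtraction gives $\boldsymbol{u}-\boldsymbol{w}=\boldsymbol{v}(\omega-\bar\omega)=2\boldsymbol{v}\,\mathrm{Im}(\omega)$; since $\omega\notin\mathbb{R}$, $\mathrm{Im}(\omega)$ is a nonzero, hence invertible, quaternion, and therefore $\boldsymbol{v}=(\boldsymbol{u}-\boldsymbol{w})(2\mathrm{Im}(\omega))^{-1}$. Choosing $q\in\mathbb{H}$ with $q^{-1}\omega q=\bar\omega$ (available since $\omega\sim\bar\omega$) and applying Lemma~\ref{SpR}(2) writes $\boldsymbol{w}=\boldsymbol{x}q$ for some $\boldsymbol{x}\in\ker(T-\mathbf{I}\omega)$, so both $\boldsymbol{u}(2\mathrm{Im}(\omega))^{-1}$ and $\boldsymbol{w}(2\mathrm{Im}(\omega))^{-1}$ sit in $\mathrm{span}_{\mathbb{H}}\ker(T-\mathbf{I}\omega)$, placing $\boldsymbol{v}$ in $\bigvee_{\mathbb{H}}\ker(T-\mathbf{I}\omega)$.

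The step I expect to require the most care is the last one: $\ker(T-\mathbf{I}\omega)$ is only $\mathbb{C}_{I_\omega}$-linear, yet its right quaternionic span must recover the full genuinely $\mathbb{H}$-linear kernel of the quadratic expression. The decomposition $\boldsymbol{v}=\boldsymbol{u}(2\mathrm{Im}(\omega))^{-1}-\boldsymbol{w}(2\mathrm{Im}(\omega))^{-1}$ combined with the axial-symmetry transport of Lemma~\ref{SpR}(2) is precisely the bridge between these two structures, and keeping all quaternionic scalars on the right-hand side of each vector throughout is the only delicate point.
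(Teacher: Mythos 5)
Your proposal is correct and follows essentially the same route as the paper's own proof: the same direct verification of the factorization, the same case analysis (using axial symmetry of $\sigma_r(T)$ from Lemma~\ref{SpR}) to get $\sigma_{sp}(T)\subseteq\sigma_r(T)$, and the same decomposition $\boldsymbol{u}-\boldsymbol{w}=\boldsymbol{v}(\omega-\overline{\omega})$ together with the transport $\ker(T-\mathbf{I}\overline{\omega})=\{\boldsymbol{x}q;\ \boldsymbol{x}\in\ker(T-\mathbf{I}\omega)\}$ for the kernel identity. No gaps.
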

	
	\begin{proof}
	For any $\boldsymbol{y}\in\mathcal{H}_q$,
	\[
	\begin{aligned}
		(T^2-2\mathrm{Re}(s)T+|s|^2)\boldsymbol{y}=&T^2\boldsymbol{y}-2\mathrm{Re}(s)T\boldsymbol{y}+|s|^2\boldsymbol{y}\\
		=&T^2\boldsymbol{y}-T\boldsymbol{y}\cdot2\mathrm{Re}(s)+|s|^2\boldsymbol{y}\\
		=&T^2\boldsymbol{y}-T\boldsymbol{y}(s+\overline{s})+\boldsymbol{y}s\overline{s} \\
		=&(T-\mathbf{I}s)(T-\mathbf{I}\overline{s})\boldsymbol{y}.
	\end{aligned}
	\]
Furthermore, it is obvious that $\sigma_r(T)\subseteq\sigma_{sp}(T)$.
On the other hand, let $s\in\sigma_{sp}(T)$ and let $\boldsymbol{x}$ be a nonzero vector such that $(T^2-2\mathrm{Re}(s)T+|s|^2)\boldsymbol{x}=0$. Then,
$(T-\mathbf{I}s)(T-\mathbf{I}\overline{s})\boldsymbol{x}=0$
and consequently at least one of $s$ and $\overline{s}$ is a right eigenvalue of $T$. Since $s\sim\overline{s}$, we always have $s\in\sigma_{r}(T)$. Therefore, $\sigma_{sp}(T)=\sigma_r(T)$.

Given any $\omega\in\sigma_{sp}(T)\setminus\mathbb{R}=\sigma_r(T)\setminus\mathbb{R}$. It is obvious that $\ker(T-\boldsymbol{I}\omega)\subseteq\ker(T^2-2\mathrm{Re}(\omega)T+|\omega|^2)$. Since $\ker(T^2-2\mathrm{Re}(\omega)T+|\omega|^2)$  is a right linear quaternionic vector space, we have
\[
\bigvee_{\mathbb{H}} \ker(T-\boldsymbol{I}\omega)\subseteq\ker(T^2-2\mathrm{Re}(\omega)T+|\omega|^2).
\]
Now, let $\boldsymbol{x}$ be a nonzero vector
such that $(T^2-2\mathrm{Re}(\omega)T+|\omega|^2)\boldsymbol{x}=0$. Since
\[
(T-\mathbf{I}\omega)(T-\mathbf{I}\overline{\omega})\boldsymbol{x}=(T-\mathbf{I}\overline{\omega})(T-\mathbf{I}\omega)\boldsymbol{x}=0,
\]
we have
\[
(T-\mathbf{I}\overline{\omega})\boldsymbol{x}\in\ker(T-\mathbf{I}\omega) \ \ \text{and} \ \ (T-\mathbf{I}\omega)\boldsymbol{x}\in\ker(T-\mathbf{I}\overline{\omega}).
\]
Notice that there exists $0\neq q\in\mathbb{H}$ such that $\omega=q^{-1}\overline{\omega}q$.
By Lemma \ref{SpR}, there exist two vectors $\boldsymbol{u}, \boldsymbol{v}\in \ker(T-\mathbf{I}\omega)$ such that
\[
\boldsymbol{u}=(T-\mathbf{I}\overline{\omega})\boldsymbol{x} \ \ \text{and} \ \ \boldsymbol{v}q=(T-\mathbf{I}\omega)\boldsymbol{x}.
\]
Consequently,
\[
\boldsymbol{u}-\boldsymbol{v}q=\boldsymbol{x}(\omega-\overline{\omega}),
\]
and then
\[
\boldsymbol{x}=(\boldsymbol{u}-\boldsymbol{v}q)(\omega-\overline{\omega})^{-1}\in\bigvee_{\mathbb{H}} \ker(T-\boldsymbol{I}\omega).
\]
Therefore,
\[
\ker(T^2-2\mathrm{Re}(\omega)T+|\omega|^2)\subseteq\bigvee_{\mathbb{H}} \ker(T-\boldsymbol{I}\omega).
\]
This finishes the proof.
	\end{proof}

\subsection{Fundamental properties of Cowen-Douglas operators on a separable quaternionic Hilbert space}

Let $\Omega_q$ be a connected open subset in $\mathbb{H}$ and  $n$ be a positive integer. Recall that  $T\in B_n^{s}(\Omega_q)$ if the following conditions hold.
\begin{enumerate}
\item[(a)] \ $\Omega_q\subseteq\sigma_{s}(T)$; \\
\item[(b)] \ $\mathrm{ran}(T^2-2\mathrm{Re}(\omega) T+\vert\omega \vert^2)=\mathcal{H}_q$, for any $\omega\in\Omega_q$;\\
\item[(c)] \ $\bigvee_{\mathbb{H}} \{\ker(T^2-2\mathrm{Re}(\omega) T+\vert\omega \vert^2);\ \omega\in\Omega_q\}=\mathcal{H}_q$;\\
\item[(d)] \ $\dim_{\mathbb{H}} \ker(T^2-2\mathrm{Re}(\omega)T+\vert\omega \vert^2)=n$, for any $\omega\in\Omega_q$.
\end{enumerate}

Review that  the axially symmetric reduced subset of $\Omega_q$ is
\[\Omega_{red}=\{\omega_{red}=\mathrm{Re}(\omega)+\boldsymbol{i}|\mathrm{Im}(\omega)|;\ \omega\in\Omega_q\}.
\]
For convenience, {\bf we assume that $\Omega_q\bigcap\mathbb{R}=\emptyset$}. Then, by Lemma $\ref{SpR}$, the conditions (a)--(d) are equivalent to the following conditions (a')--(d').
\begin{enumerate}
	\item[(a')] \ $\Omega_{red}\subseteq\sigma_{s}(T)$; \\
	\item[(b')] \ $\mathrm{ran}(T^2-2\mathrm{Re}(\omega) T+\vert\omega \vert^2)=\mathcal{H}_q$, for any $\omega\in\Omega_{red}$;\\
	\item[(c')] \ $\bigvee_{\mathbb{H}} \{\ker(T^2-2\mathrm{Re}(\omega) T+\vert\omega \vert^2);\ \omega\in\Omega_{red}\}=\mathcal{H}_q$;\\
	\item[(d')] \ $\dim_{\mathbb{H}} \ker(T^2-2\mathrm{Re}(\omega)T+\vert\omega \vert^2)=n$, for any $\omega\in\Omega_{red}$.
\end{enumerate}
We often use the symbol $B_n^{s}(\Omega_{red})$ instead of $B_n^{s}(\Omega_q)$ without confusion.
We will show that each operator $T\in B_n^{s}(\Omega_q)$ induces an $n$-dimensional quaternionic vector bundle over $\Omega_{red}$. Denote by $\mathcal{G}r(n,\ \mathcal{H}_q)$  the $n$-Grassmann manifold, which is the set of all $n$-dimensional subspaces of $\mathcal{H}_q$.

\begin{definition}
	A map $t:\Omega_{red}\rightarrow\mathcal{G}r(n,\ \mathcal{H}_q)$ is called a right holomorphic curve, if for every $\omega_0\in\Omega_{red}$, there exists an open neighborhood $\Delta$ of $\omega_0$ and   right holomorphic mappings $\gamma_i:\Delta\rightarrow\mathcal{H}_q$, $i=1, \cdots, n$, such that
    $\{\gamma_i(\omega)\}_{i=1}^n$ is a basis of the $n$-dimensional subspace $t(\omega)$ for any $\omega\in\Delta$. We call $\{\gamma_i\}_{i=1}^n$ a right holomorphic frame of $t$ over $\Delta$. Moreover, a right holomorphic mapping $\gamma:\Delta\rightarrow\mathcal{H}_q$ is said to be a right holomorphic cross-section of $t$ if $\gamma(\omega)\in t(\omega)$ for any $\omega\in\Delta$.

    Furthermore, a right holomorphic curve $t:\Omega_{red}\rightarrow\mathcal{G}r(n,\ \mathcal{H}_q)$ induces an $n$-dimensional Hermitian right holomorphic quaternionic vector bundle $(E_t,\ \Omega_{red},\pi)$ which is  defined by
    \[
    E_t=\{(\boldsymbol{x};\ \omega),\ \boldsymbol{x}\in t(\omega),\ \omega\in\Omega_{red}\}\ \mathrm{and}\ \pi(\boldsymbol{x},\ \omega)=\omega.
    \]
    \end{definition}

    Let  $T\in B_n^{s}(\Omega_{red})$. Then $T$ induces a curve $t_T:\Omega_{red}\rightarrow\mathcal{G}r(n,\ \mathcal{H}_q)$ defined by
\[
t_T(\omega)=\ker(T^2-2\mathrm{Re}(\omega)T+\vert\omega\vert^2),\ \ \ \text{for any} \ \omega\in\Omega_{red},
\]
and then $t_T$ induces a vector bundle $(E_T,\ \Omega_{red},\pi)$  which is defined by
  \[
E_T=\{(\boldsymbol{x};\ \omega),\ \boldsymbol{x}\in \ker(T^2-2\mathrm{Re}(\omega)T+\vert\omega\vert^2),\ \omega\in\Omega_{red}\}\ \mathrm{and}\ \pi(\boldsymbol{x},\ \omega)=\omega.
\]

Now, we will show that the curve $t_T$ induced by $T\in B_n^{s}(\Omega_{red})$ is a right holomorphic curve.

\begin{lemma}[\cite{CO}]\label{CO}
	Let $\mathcal{X}$ be a Banach space and $A\in\mathcal{X}$. Then $A$ is right invertible if and only if $ran A = \mathcal{X}$ and $\ker A$ is a complemented subspace of $\mathcal{X}$.
	\end{lemma}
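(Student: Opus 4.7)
The plan is to prove the two directions separately, using two standard Banach-space tools: the fact that a bounded idempotent on a Banach space yields a topological direct sum, and the open mapping theorem.

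For the forward direction, I would suppose $AB = I$ for some $B \in \mathcal{L}(\mathcal{X})$. Surjectivity of $A$ is immediate, since $A(By) = y$ for every $y \in \mathcal{X}$. To exhibit a complement of $\ker A$, I would introduce $P := BA$ and check that $P^2 = B(AB)A = BA = P$, so $P$ is a bounded idempotent on $\mathcal{X}$; any such $P$ yields the topological direct sum $\mathcal{X} = \ker P \oplus \mathrm{ran}\, P$, with both summands closed. It then suffices to show $\ker A = \ker P$. The inclusion $\ker A \subseteq \ker P$ is obvious, and conversely if $BAx = 0$ then $Ax = (AB)Ax = A(BAx) = 0$, so $\ker A = \ker P$ is complemented by $\mathrm{ran}\, P$.

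For the reverse direction, I would assume $\mathrm{ran}\, A = \mathcal{X}$ and write $\mathcal{X} = \ker A \oplus M$ with $M$ a closed complement. The restriction $A|_M \colon M \to \mathcal{X}$ is a bounded linear bijection: it is injective because $M \cap \ker A = \{0\}$, and surjective because $A$ maps $\mathcal{X} = \ker A + M$ onto $\mathrm{ran}\, A = \mathcal{X}$. Since $M$ is closed in the Banach space $\mathcal{X}$, it is itself a Banach space under the restricted norm, and the open mapping theorem gives that $(A|_M)^{-1} \colon \mathcal{X} \to M$ is bounded. Composing with the inclusion $M \hookrightarrow \mathcal{X}$ produces a bounded operator $B \in \mathcal{L}(\mathcal{X})$ with $AB = I$, which is the desired right inverse.

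The argument presents no serious obstacle; the only point deserving care is the verification that the kernel of the idempotent $BA$ coincides with $\ker A$ rather than merely containing it, which is the one place where the right-invertibility identity $AB = I$ is used beyond surjectivity.
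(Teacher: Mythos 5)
Your proof is correct. The paper gives no proof of this lemma at all --- it is quoted directly from the cited reference (Conway) --- and your argument (the idempotent $P=BA$ with $\ker A=\ker P$ for the forward direction, and the open mapping theorem applied to $A|_{M}$ for the converse) is precisely the standard one, with the one delicate point, namely that $\ker(BA)\subseteq\ker A$, handled correctly.
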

	
	\begin{lemma}\label{RI}
	Let $T\in B_n^{s}(\Omega_{red})$. Then $t_T:\Omega_{red}\rightarrow\mathcal{G}r(n,\ \mathcal{H}_q)$ is a right holomorphic curve.
	\end{lemma}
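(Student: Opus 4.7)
My plan is to adapt the classical Cowen-Douglas perturbation argument to the quaternionic setting. The direct approach --- perturbing $A(\omega) := T^2 - 2\mathrm{Re}(\omega)T + |\omega|^2\mathbf{I}$ itself --- fails because $A(\omega)$ depends only real-analytically on $\omega$ (both $\mathrm{Re}(\omega)$ and $|\omega|^2$ are non-holomorphic). Instead, I will apply the argument to the factor $B(\omega) := T - \mathbf{I}\omega$ from Theorem \ref{RSPE}. For $\omega \in \Omega_{red} \subseteq \mathbb{C}_{\mathbf{i}}$, the map $B(\omega)$ is right $\mathbb{C}_{\mathbf{i}}$-linear but not right $\mathbb{H}$-linear, so I view it as a $\mathbb{C}_{\mathbf{i}}$-linear operator on the complex Hilbert space $\mathcal{H}_q^{\mathbf{i}}$ --- the set $\mathcal{H}_q$ with $\mathbb{C}_{\mathbf{i}}$-scalar multiplication given by right quaternionic multiplication. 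In this complex picture $B(\omega)$ is affine in $\omega$, hence operator-norm holomorphic.

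I would first collect the input data for the Cowen-Douglas argument, applied to $B(\omega)$ on $\mathcal{H}_q^{\mathbf{i}}$. Writing $\overline{B(\omega)} := T - \mathbf{I}\overline{\omega}$, Theorem \ref{RSPE} gives $A(\omega) = B(\omega)\overline{B(\omega)}$, so condition (b') forces $\mathrm{ran}\,B(\omega) = \mathcal{H}_q$. Lemma \ref{SpR}(2) applied with $q = \mathbf{j}$ (for which $\mathbf{j}^{-1}\omega\mathbf{j} = \overline{\omega}$) yields $\ker\overline{B(\omega)} = \ker B(\omega)\cdot\mathbf{j}$, and Theorem \ref{RSPE} then identifies $\ker A(\omega) = \ker B(\omega) + \ker B(\omega)\cdot\mathbf{j}$. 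A short computation using $\omega \neq \overline{\omega}$ (which holds because $\Omega_q \cap \mathbb{R} = \emptyset$) shows that this sum is $\mathbb{C}_{\mathbf{i}}$-direct, so condition (d') gives $\dim_{\mathbb{C}_{\mathbf{i}}}\ker B(\omega) = n$.

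I would then run the classical perturbation argument on $\mathcal{H}_q^{\mathbf{i}}$. Fix $\omega_0 \in \Omega_{red}$, set $K_0 = \ker B(\omega_0)$, and $M = K_0^\perp$. By Lemma \ref{CO} and the bounded inverse theorem, $B(\omega_0)|_M \colon M \to \mathcal{H}_q^{\mathbf{i}}$ is an isomorphism of Banach spaces. Since $B(\cdot)$ is norm-affine, a Neumann-series argument yields a disc $\Delta \ni \omega_0$ on which $B(\omega)|_M$ remains invertible, with $L(\omega) := (B(\omega)|_M)^{-1}$ holomorphic in $\omega$. Fixing a $\mathbb{C}_{\mathbf{i}}$-basis $\{e_1, \ldots, e_n\}$ of $K_0$, the candidate sections are
\[
\gamma_i(\omega) := e_i - L(\omega)B(\omega)e_i, \qquad \omega \in \Delta.
\]
The identity $B(\omega)L(\omega) = I$ on $\mathcal{H}_q^{\mathbf{i}}$ gives $B(\omega)\gamma_i(\omega) = 0$, and $\gamma_i(\omega_0) = e_i$, so after possibly shrinking $\Delta$ the set $\{\gamma_i(\omega)\}_{i=1}^n$ is a $\mathbb{C}_{\mathbf{i}}$-basis of $\ker B(\omega)$.

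Finally, I need to translate the conclusion back to $\mathbb{H}$. From paragraph two, $\{\gamma_i(\omega)\}$ right-$\mathbb{H}$-spans $\ker B(\omega) + \ker B(\omega)\cdot\mathbf{j} = \ker A(\omega) = t_T(\omega)$, and being $n$ generators of the $n$-dimensional right-$\mathbb{H}$-space $t_T(\omega)$ they automatically form a right $\mathbb{H}$-basis. Since $\mathbb{C}_{\mathbf{i}}$-scalar multiplication on $\mathcal{H}_q^{\mathbf{i}}$ is by definition right quaternionic multiplication restricted to $\mathbb{C}_{\mathbf{i}}$, the $\mathbb{C}_{\mathbf{i}}$-holomorphy of each $\gamma_i$ --- meaning that $\lim_{h \to 0}(\gamma_i(\omega + h) - \gamma_i(\omega))\cdot h^{-1}$ exists for $h \in \mathbb{C}_{\mathbf{i}}$ --- is exactly right holomorphy in the sense of the Remark in Section \ref{sec2}. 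The main obstacle I anticipate is precisely this passage between the complex and quaternionic pictures: verifying the sum decomposition and directness in paragraph two, and the equivalence of the two notions of holomorphy in paragraph four, with the core perturbation step being a faithful copy of the classical argument.
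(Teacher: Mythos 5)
Your proposal is correct and follows essentially the same route as the paper: both pass to the complex-linear picture (your $\mathcal{H}_q^{\boldsymbol{i}}$ with right $\mathbb{C}_{\boldsymbol{i}}$-scalar multiplication is exactly the paper's complex representation $(\mathcal{H}_q)_{\mathbb{C}}$, and your $B(\omega)$ is the paper's $(T-\mathbf{I}\omega)_{\mathbb{C}}$), and both manufacture local right holomorphic sections of $\ker B(\omega)$ from an inverse of that operator at $\omega_0$ supplied by Lemma \ref{CO}, then transfer back to $\ker(T^2-2\mathrm{Re}(\omega)T+|\omega|^2)$ via Theorem \ref{RSPE}. The only differences are cosmetic: the paper writes the sections as the power series $\sum_{k}X^{k}(x_i)(\omega-\omega_0)^{k}$ for a global right inverse $X$ and checks independence by continuity of the Gram matrix, whereas you use the restricted inverse $L(\omega)=(B(\omega)|_{K_0^{\perp}})^{-1}$ and the projection onto $K_0$ --- two standard, equivalent forms of the classical Cowen--Douglas local-frame construction.
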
\label{RH}

\begin{proof}
Given any $\omega_0\in \Omega_{red}$. Let $\{x_1,\ x_2,\cdots,\ x_n\}$ be a complex basis of $\ker(T-\boldsymbol{I}\omega_0)$. Following from Theorem \ref{RSPE}, $\{x_1,\ x_2,\cdots,\ x_n\}$ is also a quaternionic basis of $\ker(T^2-2\mathrm{Re}(\omega_0)T+\vert\omega_0\vert^2)$.
Notice that $(T-\boldsymbol{I}\omega_0)_{\mathbb{C}}=T_{\mathbb{C}}-\omega_0\boldsymbol{I}_{\mathbb{C}}$ is a linear operator on $(\mathcal{H}_q)_{\mathbb{C}}$, although $T-\boldsymbol{I}\omega_0$ is not a quaternionic (right or left) linear operator on $\mathcal{H}_{q}$.
Then, by Lemma \ref{CO}, there exists a bounded linear operator $X$ on $(\mathcal{H}_q)_{\mathbb{C}}$ such that $(T-\boldsymbol{I}\omega_0)_{\mathbb{C}}X=I$.
For any $i=1,\cdots, n$, we define
\[
\gamma_i(\omega)=\sum_{k=0}^{\infty}(X^k((x_i)_{\mathbb{C}}))_{\mathbb{H}}(\omega-\omega_0)^{k},
\]
which is a right holomorphic cross-section over a neighborhood of $\omega_0$.

Since
\[
\begin{aligned}
    & (T^2-2\mathrm{Re}(\omega)T+|\omega|^2)_{\mathbb{C}}(\sum_{k=0}^{\infty}X^k((x_i)_{\mathbb{C}})(\omega-\omega_0)^k) \\
    = & (T_{\mathbb{C}}-\overline{\omega}\boldsymbol{I}_{\mathbb{C}})(T_{\mathbb{C}}-\omega\boldsymbol{I}_{\mathbb{C}})(\sum_{k=0}^{\infty}X^k((x_i)_{\mathbb{C}})(\omega-\omega_0)^k) \\
    = & (T_{\mathbb{C}}-\overline{\omega}\boldsymbol{I}_{\mathbb{C}})(T_{\mathbb{C}}-\omega_0\boldsymbol{I}_{\mathbb{C}})(\sum_{k=0}^{\infty}X^k((x_i)_{\mathbb{C}})(\omega-\omega_0)^k) \\
    &+(T_{\mathbb{C}}-\overline{\omega}\boldsymbol{I}_{\mathbb{C}})(\omega_0\boldsymbol{I}_{\mathbb{C}}-\omega\boldsymbol{I}_{\mathbb{C}})(\sum_{k=0}^{\infty}X^k((x_i)_{\mathbb{C}})(\omega-\omega_0)^k) \\
    = & (T_{\mathbb{C}}-\overline{\omega}\boldsymbol{I}_{\mathbb{C}})\left(\sum_{k=0}^{\infty}X^k((x_i)_{\mathbb{C}})(\omega-\omega_0)^{k+1}-\sum_{k=0}^{\infty}X^k((x_i)_{\mathbb{C}})(\omega-\omega_0)^{k+1}\right) \\
    = & 0,
\end{aligned}
\]
we have
\[
(T^2-2\mathrm{Re}(\omega)T+|\omega|^2)\gamma_i(\omega)=0.
\]
More precisely, we have $(T-\boldsymbol{I}\omega)\gamma_i(\omega)=0$.

Let
\[
f(\omega)=\begin{pmatrix}
\langle\gamma_1(\omega),\ \gamma_1(\omega)\rangle & \langle\gamma_1(\omega),\ \gamma_2(\omega)\rangle & \cdots & \langle\gamma_1(\omega),\ \gamma_n(\omega)\rangle \\
\langle\gamma_2(\omega),\ \gamma_1(\omega)\rangle & \langle\gamma_2(\omega),\ \gamma_2(\omega)\rangle & \cdots & \langle\gamma_2(\omega),\ \gamma_n(\omega)\rangle \\
\vdots & \vdots & \ddots & \vdots \\
\langle\gamma_n(\omega),\ \gamma_1(\omega)\rangle & \langle\gamma_n(\omega),\ \gamma_2(\omega)\rangle & \cdots & \langle\gamma_n(\omega),\ \gamma_n(\omega)\rangle \\
\end{pmatrix}.
\]
Since $\{\gamma_i(\omega_0)=x_i\}_{i=1}^{n}$ is linear independent, one can see that $f(\omega_0)$ is a positive definite quadratic form. By the continuity of $f(\omega)$ with respect to $\omega$, there exists a neighborhood $\Delta$ of $\omega_0$ such that $f(\omega)$ is a positive definite quadratic form for every $\omega\in\Delta$. Consequently, $\{\gamma_i(\omega)\}_{i=1}^{n}$ is linear independent. Together with $\dim_{\mathbb{H}}{\ker(T-\boldsymbol{I}\omega)}=n$, $\{\gamma_i(\omega)\}_{i=1}^{n}$ is a basis of $\ker(T-\boldsymbol{I}\omega)$ and  hence
$\{\gamma_i\}_{i=1}^{n}$ is a right holomorphic frame of $t_T$ over $\Delta$. Therefore, $t_T$ is a right holomorphic curve.
		\end{proof}

\begin{lemma}\label{CDP}
Let $\Delta$ be a connected open subset of
$\Omega_{red}$. Then $B_n^{s}(\Omega_{red})\subseteq B_n^{s}(\Delta)$.
\end{lemma}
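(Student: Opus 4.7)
My plan is to verify the four defining conditions (a')--(d') for $\Delta$ in turn. Since $\Delta\subseteq\Omega_{red}$ and conditions (a'), (b'), (d') are pointwise requirements on $\omega$, they follow immediately from the fact that $T\in B_n^{s}(\Omega_{red})$. The only condition carrying nontrivial content is the spanning condition (c'), and for this I would argue by contradiction using a connectedness/identity-theorem argument on $\Omega_{red}$.

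Set $M_\Delta:=\bigvee_{\mathbb{H}}\{\ker(T^2-2\mathrm{Re}(\omega)T+|\omega|^2):\omega\in\Delta\}$ and suppose toward a contradiction that $M_\Delta\neq\mathcal{H}_q$, so some nonzero $y\in\mathcal{H}_q$ is orthogonal to $M_\Delta$. Define
\[
G:=\{\omega_0\in\Omega_{red}: \text{there is a neighborhood } U \text{ of } \omega_0 \text{ with } y\perp\ker(T^2-2\mathrm{Re}(\omega)T+|\omega|^2) \text{ for all } \omega\in U\}.
\]
I would show that $G$ is (i) nonempty, (ii) open in $\Omega_{red}$, and (iii) closed in $\Omega_{red}$. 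For (i), any $\omega^{*}\in\Delta$ lies in $G$ because $\Delta$ itself is an open neighborhood of $\omega^{*}$ inside which every kernel sits in $M_\Delta$ and is thus annihilated by $y$. Openness is automatic from the definition. Since $\Omega_{red}$ is connected, (i)--(iii) would force $G=\Omega_{red}$, yielding $y\perp\ker(T^2-2\mathrm{Re}(\omega)T+|\omega|^2)$ for every $\omega\in\Omega_{red}$, which together with condition (c) for $\Omega_{red}$ gives $y=0$, the desired contradiction.

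The only substantive step is closedness. Given $\omega_0\in\overline{G}\cap\Omega_{red}$, I would apply Lemma \ref{RI} to obtain a local right holomorphic frame $\{\gamma_i\}_{i=1}^n$ of $t_T$ on some connected neighborhood $U$ of $\omega_0$. Using the power-series form of $\gamma_i$ from the proof of Lemma \ref{RI} and the right linearity $\langle xp,z\rangle_{\mathbb{H}}=\langle x,z\rangle_{\mathbb{H}}p$, the scalar-valued functions $\phi_i(\omega):=\langle\gamma_i(\omega),y\rangle_{\mathbb{H}}$ are right holomorphic on $U$ and therefore split as $\phi_i=f_{1,i}+\boldsymbol{j}\,f_{2,i}$ with $f_{1,i},f_{2,i}$ complex-valued holomorphic on $U$, by item (3) of the remark on right holomorphic functions in Section~\ref{sec2}. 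Since $\omega_0\in\overline{G}$, some point of $G$ lies in $U$, and around that point $\phi_i$ vanishes on a nonempty open subset of $U$; applying the classical identity theorem to each of $f_{1,i}$ and $f_{2,i}$ then forces $\phi_i\equiv 0$ on $U$. Because $\{\gamma_i(\omega)\}_{i=1}^n$ is a quaternionic basis of $\ker(T^2-2\mathrm{Re}(\omega)T+|\omega|^2)$ for every $\omega\in U$, it follows that $y\perp\ker(T^2-2\mathrm{Re}(\omega)T+|\omega|^2)$ for all $\omega\in U$, whence $\omega_0\in G$.

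The main obstacle is essentially bookkeeping: confirming that pairing the right holomorphic local frame with the fixed vector $y$ really produces a right holomorphic quaternion-valued function (so that the $f_1+\boldsymbol{j}f_2$ decomposition applies) and that the identity theorem transfers cleanly to this setting via its complex components. Both points are routine given the analytic machinery already developed in Section \ref{sec2} and Lemma \ref{RI}.
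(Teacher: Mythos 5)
Your proposal is correct and follows essentially the same strategy as the paper: pass to a nonzero vector orthogonal to the span over $\Delta$, note that pairing it with right holomorphic cross-sections yields right holomorphic quaternion-valued functions vanishing on an open set, and apply the identity theorem (via the complex components) to contradict condition (c) for $\Omega_{red}$. If anything, your open--closed connectedness argument with local frames is slightly more careful than the paper's direct appeal to ``the uniqueness of analytic functions'' for a cross-section ``over $\Omega_{red}$'', since Lemma \ref{RI} only guarantees frames locally.
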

    \begin{proof}
	Following from Definition $\ref{SPCD}$, it suffices to prove that
\[
\bigvee_{\mathbb{H}}\{\ker(T^2-2\mathrm{Re}(\omega)+\vert\omega\vert^2),\ \omega\in\Delta\}=\mathcal{H}_q.
\]
Suppose that $\bigvee_{\mathbb{H}}\{\ker(T^2-2\mathrm{Re}(\omega)+\vert\omega\vert^2);\ \omega\in\Delta\}\ne\mathcal{H}_q$. Then, there exists
a non-trivial vector $\boldsymbol{x}\in\mathcal{H}_q$ such that
\[
\langle\boldsymbol{x},\ \boldsymbol{v}\rangle=0, \ \ \ \text{for every} \ \boldsymbol{v}\in \bigvee_{\mathbb{H}}\{\ker(T^2-2\mathrm{Re}(\omega)+\vert\omega\vert^2),\ \omega\in\Delta\}.
\]
Given any right holomorphic cross-section $\gamma(\omega)$ over $\Omega_{red}$. Since $\langle\gamma(\omega),\ \boldsymbol{x}\rangle_{\mathbb{H}}$ is a right holomorphic function on $\Omega_{red}$ and $\langle\gamma(\lambda),\ \boldsymbol{x}\rangle_{\mathbb{H}}=0$ for any $\lambda\in\Delta$, it follows from the uniqueness of analytic functions that
$\langle\gamma(\omega),\ \boldsymbol{x}\rangle_{\mathbb{H}}=0$
for every $\omega\in\Omega_{red}$. This implies
\[
\langle\boldsymbol{v},\ \boldsymbol{x},\ \rangle_{\mathbb{H}}=0, \ \ \ \text{for every} \ \boldsymbol{v}\in \bigvee_{\mathbb{H}}\{\ker(T^2-2\mathrm{Re}(\omega)+\vert\omega\vert^2);\ \omega\in\Omega_{red}\},
\]
which is a contradiction to $\bigvee_{\mathbb{H}}\{\ker(T^2-2\mathrm{Re}(\omega)+\vert\omega\vert^2);\ \omega\in\Omega_{red}\}=\mathcal{H}_q$. This finishes the proof.
    \end{proof}

In fact, following from a similar  manner in the above proof, we could obtain the following result immediately.
\begin{corollary}
Let $T\in B_n^{s}(\Omega_{red})$. If a sequence of points $\{\omega_n\}_{n=1}^{\infty}$ in $\Omega_{red}$ has an accumulation point in $\Omega_{red}$,
then
\[
\bigvee_{\mathbb{H}}\{\ker(T^2-2\mathrm{Re}(\omega_n)T+\vert\omega_n\vert^2);\ n=1, 2,  \cdots\}=\mathcal{H}_q.
\]
\end{corollary}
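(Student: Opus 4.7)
The plan is to mimic the strategy of Lemma \ref{CDP} almost verbatim, with the only difference being that the vanishing set is not an open subset but a sequence with an accumulation point; the identity principle for holomorphic functions is strong enough to close the gap. So I would argue by contradiction: suppose
\[
\mathcal{M}:=\bigvee_{\mathbb{H}}\{\ker(T^2-2\mathrm{Re}(\omega_n)T+|\omega_n|^2);\ n=1,2,\dots\}\neq\mathcal{H}_q.
\]
Then there exists a nonzero $\boldsymbol{x}\in\mathcal{H}_q$ with $\langle \boldsymbol{v},\boldsymbol{x}\rangle_{\mathbb{H}}=0$ for every $\boldsymbol{v}\in\mathcal{M}$.

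Next, for any point $\omega_0\in\Omega_{red}$, Lemma \ref{RH} supplies a right holomorphic frame $\{\gamma_1,\dots,\gamma_n\}$ of $t_T$ on some neighborhood of $\omega_0$. For each such cross-section $\gamma_i$, the scalar function $h_i(\omega):=\langle \gamma_i(\omega),\boldsymbol{x}\rangle_{\mathbb{H}}$ is quaternion-valued and right holomorphic on its domain; by the equivalence (3) of the remark following the definition of right holomorphicity, $h_i=f_1+\boldsymbol{j}f_2$ with $f_1,f_2$ classical complex-holomorphic functions on that neighborhood (viewed inside $\mathbb{C}_{\boldsymbol{i}}\supseteq\Omega_{red}$). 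Gluing frames over a connected open cover of $\Omega_{red}$ and invoking analytic continuation, any such $h_i$ (and more generally $\langle\gamma(\omega),\boldsymbol{x}\rangle_{\mathbb{H}}$ for any globally defined right holomorphic cross-section $\gamma$) extends to a right holomorphic quaternion-valued function on all of $\Omega_{red}$.

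Now comes the key step where the hypothesis enters: for every $n$, $\gamma_i(\omega_n)\in\ker(T^2-2\mathrm{Re}(\omega_n)T+|\omega_n|^2)\subseteq\mathcal{M}$, so $h_i(\omega_n)=0$. Writing $h_i=f_1+\boldsymbol{j}f_2$, both $f_1$ and $f_2$ are complex-holomorphic functions on the connected open set $\Omega_{red}\subset\mathbb{C}$ that vanish on a set $\{\omega_n\}$ possessing an accumulation point in $\Omega_{red}$. The classical identity theorem forces $f_1\equiv f_2\equiv 0$, hence $h_i\equiv 0$ on $\Omega_{red}$. Therefore $\langle \gamma_i(\omega),\boldsymbol{x}\rangle_{\mathbb{H}}=0$ for every $\omega\in\Omega_{red}$ and every local right holomorphic frame vector, which means $\langle \boldsymbol{v},\boldsymbol{x}\rangle_{\mathbb{H}}=0$ for every $\boldsymbol{v}\in\ker(T^2-2\mathrm{Re}(\omega)T+|\omega|^2)$ and every $\omega\in\Omega_{red}$. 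By condition (c') of $B_n^s(\Omega_{red})$ this forces $\boldsymbol{x}=0$, a contradiction.

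The only step requiring any care is the identity-theorem argument for quaternion-valued right holomorphic functions; I expect it to be entirely routine once one reduces via $h_i=f_1+\boldsymbol{j}f_2$ to the classical complex identity theorem, so there is no genuine obstacle beyond bookkeeping. The remaining steps are direct transcriptions of the proof of Lemma \ref{CDP}.
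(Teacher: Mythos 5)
Your proposal is correct and takes essentially the same approach the paper intends: the paper gives no separate proof of this corollary, stating only that it follows ``in a similar manner'' from the proof of the preceding restriction lemma, and your argument is precisely that adaptation, replacing vanishing on an open subset by vanishing on a set with an accumulation point and closing the gap via the classical identity theorem applied to the components of $h=f_1+\boldsymbol{j}f_2$. The orthogonal-complement setup, the use of local right holomorphic frames, and the appeal to condition (c$'$) all match the paper's argument, so there is nothing genuinely different to compare.
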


\begin{lemma}\label{orthonormal basis P}
Let $T\in B_n^{s}(\Omega_{red})$. Suppose that $\gamma(\omega)$ is a right holomorphic cross-section of $t_T$.
Then, $\gamma(\omega)\in\ker(T-\boldsymbol{I}\omega)$ for any $\omega\in\Omega_{red}$. Furthermore, we have for any $k\in\mathbb{N}$,
\[
T\gamma^{(k)}(\omega)=\gamma^{(k)}(\omega)\omega+k\gamma^{(k-1)}(\omega)
\]
and consequently,
\begin{equation*}\label{nd}
(T^2-2\mathrm{Re}(\omega)T+\vert\omega\vert^2)\gamma^{(k)}(\omega) \\
=k(k-1)\gamma^{(k-2)}(\omega)+k\gamma^{(k-1)}(\omega)(\omega-\overline{\omega}),
\end{equation*}
where $\gamma^{(k)}(\omega)$ means the $k$-th derivative of $\gamma(\omega)$.
\end{lemma}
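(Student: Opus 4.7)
The plan is to settle the three claims of the lemma in sequence: the pointwise identity $(T-\boldsymbol{I}\omega)\gamma(\omega)=0$, the derivative recursion by induction, and then the second displayed formula as a purely algebraic consequence.

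For the base identity, I introduce $h(\omega):=(T-\boldsymbol{I}\omega)\gamma(\omega)=T\gamma(\omega)-\gamma(\omega)\omega$ and aim to show $h\equiv 0$. First, $h$ is right holomorphic: $T$ commutes with $\partial/\partial\overline{z}$ because it is bounded right $\mathbb{H}$-linear, so $T\gamma$ is right holomorphic, and a direct Wirtinger computation gives $\partial(\gamma(\omega)\omega)/\partial\overline{z}=(\partial\gamma/\partial\overline{z})\omega=0$ (using that $\omega\in\mathbb{C}_{\boldsymbol{i}}$ commutes with $\boldsymbol{i}$). Second, since $(T-\boldsymbol{I}\omega)$ and $(T-\boldsymbol{I}\overline{\omega})$ commute and their product is $T^2-2\mathrm{Re}(\omega)T+|\omega|^2$ by Theorem \ref{RSPE}, the cross-section condition yields
\[
(T-\boldsymbol{I}\overline{\omega})h(\omega)=(T^2-2\mathrm{Re}(\omega)T+|\omega|^2)\gamma(\omega)=0,
\]
equivalently $Th(\omega)=h(\omega)\overline{\omega}$. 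Applying $\partial/\partial\overline{z}$ to both sides: the left side is $T(\partial h/\partial\overline{z})=0$ by right holomorphicity of $h$, while the right side evaluates via the Leibniz rule (using $\partial\overline{\omega}/\partial\overline{z}=1$) to $(\partial h/\partial\overline{z})\overline{\omega}+h(\omega)=h(\omega)$. Hence $h(\omega)=0$, so $\gamma(\omega)\in\ker(T-\boldsymbol{I}\omega)$.

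Next, I prove $T\gamma^{(k)}(\omega)=\gamma^{(k)}(\omega)\omega+k\gamma^{(k-1)}(\omega)$ by induction on $k\geq 0$. The case $k=0$ is the base identity. Assume the identity at level $k-1$; differentiating with respect to the complex variable $\omega$, and using that $T$ commutes with $d/d\omega$ while the Leibniz rule applies to $\gamma^{(k-1)}(\omega)\omega$ (valid because $\omega\in\mathbb{C}_{\boldsymbol{i}}$), one obtains $T\gamma^{(k)}(\omega)=\gamma^{(k)}(\omega)\omega+\gamma^{(k-1)}(\omega)+(k-1)\gamma^{(k-1)}(\omega)=\gamma^{(k)}(\omega)\omega+k\gamma^{(k-1)}(\omega)$. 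Then applying $T$ once more and using the recursion twice gives
\[
T^2\gamma^{(k)}(\omega)=\gamma^{(k)}(\omega)\omega^2+2k\gamma^{(k-1)}(\omega)\omega+k(k-1)\gamma^{(k-2)}(\omega).
\]
Subtracting $2\mathrm{Re}(\omega)T\gamma^{(k)}(\omega)$ and adding $|\omega|^2\gamma^{(k)}(\omega)$, the $\gamma^{(k)}$ contribution vanishes by the scalar identity $\omega^2-2\mathrm{Re}(\omega)\omega+|\omega|^2=0$ (valid because $\omega$ and $\overline{\omega}$ commute in $\mathbb{C}_{\boldsymbol{i}}$), the $\gamma^{(k-1)}$ coefficients combine via $2\omega-2\mathrm{Re}(\omega)=\omega-\overline{\omega}$ into $k\gamma^{(k-1)}(\omega)(\omega-\overline{\omega})$, and the $\gamma^{(k-2)}$ term is carried through unchanged.

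The main subtlety lies in the base case: a cross-section a priori only lands in the larger kernel $\ker(T^2-2\mathrm{Re}(\omega)T+|\omega|^2)$, so pinning it into the smaller $\ker(T-\boldsymbol{I}\omega)$ requires carefully separating the right holomorphic behavior of $Th(\omega)$ from the anti-holomorphic appearance of $\overline{\omega}$ in $h(\omega)\overline{\omega}$; it is precisely this mismatch, uncovered by applying $\partial/\partial\overline{z}$, that forces $h$ to vanish.
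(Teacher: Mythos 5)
Your proposal is correct and follows essentially the same route as the paper: the key step in both is to exploit the anti-holomorphic dependence on $\overline{\omega}$ by applying $\partial/\partial\overline{\omega}$ to the kernel equation (the paper does this directly to $(T^2-2\mathrm{Re}(\omega)T+|\omega|^2)\gamma(\omega)=0$, while you first factor through $h=(T-\boldsymbol{I}\omega)\gamma$ and the relation $Th=h\overline{\omega}$, which is the same computation repackaged), and the derivative recursion and final identity then follow by the same Leibniz-rule algebra. All steps check out, including the commutation of $(T-\boldsymbol{I}\omega)$ and $(T-\boldsymbol{I}\overline{\omega})$ that your factored version relies on.
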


\begin{proof}
	 Since
	 \[
	 (T^2-2\mathrm{Re}(\omega)T+\vert\omega\vert^2)\gamma(\omega)=0,
	 \]
	 one can see that
 \[
	\begin{aligned}
		0=	& \frac{\partial}{\partial \overline{w}}\left((T^2-2\mathrm{Re}(\omega)T+\vert\omega\vert^2)\gamma(\omega)\right) \\
		= & \frac{\partial}{\partial \overline{w}}\left(T^2(\gamma(\omega))-T(\gamma(\omega)\omega)-T(\gamma(\omega)\overline{\omega})+\gamma(\omega)\omega \overline{\omega}\right) \\
		= & -T\gamma(\omega)+\gamma(\omega)\omega,
	\end{aligned}
	\]
	which implies  $\gamma(\omega)\in\ker(T-\boldsymbol{I}\omega)$ for any $\omega\in\Omega_{red}$.
	Then, following from $T\gamma(\omega)=\gamma(\omega)\omega$, we have that
	\[
	T\gamma^{(k)}(\omega)=\gamma^{(k)}(\omega)\omega+k\gamma^{(k-1)}(\omega).
	\]
Consequently,
	 \[
	 \begin{aligned}
	 	& (T^2-2\mathrm{Re}(\omega)T+\vert\omega\vert^2)\gamma^{(k)}(\omega) \\
	 	= & (T-\boldsymbol{I}\overline{\omega})(T-\boldsymbol{I}\omega)\gamma^{(k)}(\omega) \\
	 	= & ((T-\boldsymbol{I}\omega)+(\boldsymbol{I}\omega-\boldsymbol{I}\overline{\omega}))(k\gamma^{(k-1)}(\omega)) \\
	 	=& k(k-1)\gamma^{(k-2)}(\omega)+k\gamma^{(k-1)}(\omega)(\omega-\overline{\omega}).
	 \end{aligned}
	 \]
The proof is finished.
    \end{proof}
\begin{remark}\label{orthonormal basis PC}
Following from Lemma \ref{orthonormal basis P}, one can see that for any $k\in\mathbb{N}$,
\[
\begin{aligned}
(T^2-2\mathrm{Re}(\omega)T+\vert\omega\vert^2)^k\gamma^{(j)}(\omega)=
	\begin{cases}
		0,\ & 0\leq j\leq k-1 \\
		 k!\gamma(\omega)(\omega-\overline{\omega})^k,\ & j=k
	\end{cases}
\end{aligned},
\]
which implies $\gamma^{(j)}(\omega)\in\bigvee_{k=1}^{\infty}\ker(T^2-2\mathrm{Re}(\omega_0)T+\vert\omega_0\vert^2)^k$ for $j=0, \cdots, k-1$. Given any $\omega_0\in\Omega_{red}$. Notice that each right holomorphic cross-section has a Taylor expansion over a neighborhood $\Delta$ of $\omega_0$. Then, together with Lemma \ref{CDP}, we have
\[
\bigvee_{k=1}^{\infty}\ker(T^2-2\mathrm{Re}(\omega_0)T+\vert\omega_0\vert^2)^k=\bigvee_{\mathbb{H}}\{\ker(T^2-2\mathrm{Re}(\omega)+\vert\omega\vert^2),\ \omega\in\Delta\}=\mathcal{H}_q.
\]
In fact, it is not difficult to see that the third condition of Definition $\ref{SPCD}$ is equivalent to
\[
\bigvee_{k=1}^{\infty}\ker(T^2-2\mathrm{Re}(\omega_0)T+\vert\omega_0\vert^2)^k=\mathcal{H}_q.
\]
\end{remark}

\begin{lemma}\label{Basis}
Let $T\in B_n^{s}(\Omega_{red})$ and let $\{\gamma_i\}_{i=1}^{n}$ be a right holomorphic frame of the vector bundle $E_T$. Given any $\omega_0\in\Omega_{red}(\subset \mathbb{C}\setminus\mathbb{R})$. Then for any $k\in\mathbb{N}$,
\[
\{\gamma_1(\omega_0),\ \cdots,\ \gamma_n(\omega_0),\ \cdots,\ \gamma_1^{(k-1)}(\omega_0),\ \cdots,\ \gamma_n^{(k-1)}(\omega_0)\}
\]
is a basis of $\ker(T^2-2\mathrm{Re}(\omega_0)T+\vert\omega_0\vert^2)^k$.
\end{lemma}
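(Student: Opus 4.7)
My plan is to prove this by showing three things: first, that each $\gamma_i^{(j)}(\omega_0)$ for $0\le j\le k-1$ lies in $\ker(T^2-2\mathrm{Re}(\omega_0)T+|\omega_0|^2)^k$; second, that these $nk$ vectors are right $\mathbb{H}$-linearly independent; and third, that $\dim_{\mathbb{H}}\ker(T^2-2\mathrm{Re}(\omega_0)T+|\omega_0|^2)^k=nk$. The first is essentially given by Remark \ref{orthonormal basis PC}. Combined, these three facts force the $nk$ vectors to be a basis.

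For the linear independence, set $N=T^2-2\mathrm{Re}(\omega_0)T+|\omega_0|^2$ and suppose $\sum_{j=0}^{k-1}\sum_{i=1}^{n}\gamma_i^{(j)}(\omega_0)\,a_{ij}=0$ with $a_{ij}\in\mathbb{H}$. Since $N$ (and hence $N^{k-1}$) is right $\mathbb{H}$-linear, applying $N^{k-1}$ and invoking Remark \ref{orthonormal basis PC} annihilates every term with $j\le k-2$ and leaves $\sum_{i=1}^{n}(k-1)!\,\gamma_i(\omega_0)(\omega_0-\overline{\omega_0})^{k-1}a_{i,k-1}=0$. The scalar $c=(k-1)!(\omega_0-\overline{\omega_0})^{k-1}\in\mathbb{C}_{I_{\omega_0}}$ is nonzero since $\omega_0\notin\mathbb{R}$, and the right $\mathbb{H}$-linear independence of $\{\gamma_i(\omega_0)\}_{i=1}^n$ (as a frame for $\ker N$) then yields $a_{i,k-1}=0$ for all $i$. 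Iterating with $N^{k-2},N^{k-3},\ldots$ successively kills the coefficients of the remaining derivatives, giving $a_{ij}=0$ for all $i,j$.

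For the dimension count, I would argue by induction on $k$. The base $k=1$ is condition (d). For the step, I would use condition (b): since $\mathrm{ran}(N)=\mathcal{H}_q$, given any $\boldsymbol{z}\in\ker N^k$ there exists $\boldsymbol{y}\in\mathcal{H}_q$ with $N\boldsymbol{y}=\boldsymbol{z}$; then $N^{k+1}\boldsymbol{y}=N^k\boldsymbol{z}=0$, so $\boldsymbol{y}\in\ker N^{k+1}$. Thus the right $\mathbb{H}$-linear map $N\colon\ker N^{k+1}\to\ker N^k$ is surjective, with kernel $\ker N^{k+1}\cap\ker N=\ker N$ of quaternionic dimension $n$. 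Rank-nullity for right $\mathbb{H}$-vector spaces then gives $\dim_{\mathbb{H}}\ker N^{k+1}=\dim_{\mathbb{H}}\ker N^k+n$, and the induction closes at $\dim_{\mathbb{H}}\ker N^k=nk$.

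The step I expect to be the main obstacle is ensuring that the surjectivity argument is valid in the right quaternionic setting, because condition (b) gives surjectivity of $N$ on $\mathcal{H}_q$ but we actually need surjectivity of $N$ restricted to the (a priori possibly infinite-dimensional) subspace $\ker N^{k+1}$. The pull-back argument above circumvents this cleanly, but one must be careful that the rank-nullity formula truly holds for right $\mathbb{H}$-linear maps between right quaternionic vector spaces — which it does, because a basis of $\ker N$ can be lifted through the surjection to build a direct-sum decomposition $\ker N^{k+1}=\ker N\oplus V$ with $N|_V\colon V\to\ker N^k$ a right $\mathbb{H}$-linear bijection. Once the dimensional equality $nk$ is secured, combining it with linear independence forces the listed vectors to be a basis.
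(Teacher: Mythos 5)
Your proof is correct, and its two main ingredients coincide with the paper's: the linear independence is obtained exactly as in the paper, by applying $N^{k-1}$ (with $N=T^2-2\mathrm{Re}(\omega_0)T+|\omega_0|^2$) and using the identity $N^{k-1}\gamma_i^{(k-1)}(\omega_0)=(k-1)!\,\gamma_i(\omega_0)(\omega_0-\overline{\omega_0})^{k-1}$ together with $\omega_0\notin\mathbb{R}$; the paper phrases this in terms of the quotients $\ker N^{k}/\ker N^{k-1}$ while you iterate downward through the full sum, but the computation is the same. Where you genuinely diverge is the dimension count: the paper disposes of it in one line by noting that $N$ is a surjective Fredholm operator and invoking (implicitly) the multiplicativity of the Fredholm index to get $\dim_{\mathbb{H}}\ker N^k=nk$, whereas you prove $\dim_{\mathbb{H}}\ker N^{k+1}=\dim_{\mathbb{H}}\ker N^k+n$ by exhibiting the restriction $N\colon\ker N^{k+1}\to\ker N^k$ as a surjection with kernel $\ker N$ and applying rank--nullity over the division ring $\mathbb{H}$. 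Your version is more elementary and self-contained (it uses only condition (b) and condition (d) of the definition, and sidesteps any appeal to index theory for quaternionic operators, which the paper does not develop); the paper's version is shorter but leans on machinery it leaves implicit. Both are valid, and your attention to the legitimacy of rank--nullity for right $\mathbb{H}$-linear maps is well placed and correctly resolved.
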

    \begin{proof}
    It suffices to prove that $[{{\gamma_1^{(k-1)}(\omega_0)}}],\ \cdots,\ [{\gamma_n^{(k-1)}(\omega_0)}]$
    is s basis of
    \[
    \ker(T^2-2\mathrm{Re}(\omega_0)T+\vert\omega_0\vert^2)^k\slash\ker(T^2-2\mathrm{Re}(\omega_0)T+\vert\omega_0\vert^2)^{k-1},
    \]
    where $[\ \cdot\ ]$ means the
    residue class. If $\sum_{i=1}^n[\gamma_i^{(k-1)}(\omega_0)]a_i=0$, $a_i\in\mathbb{H}$, then
    \[
    (T^2-2\mathrm{Re}(\omega_0)T+|\omega_0|^2)^{k-1}\sum_{i=1}^n\gamma_i^{(k-1)}(\omega_0)a_i=0.
    \]
    Consequently, by Lemma \ref{orthonormal basis P} (or Remark \ref{orthonormal basis PC}), we have
    \[
   \sum_{i=1}^n(k-1)!\gamma_i(\omega_0)(\omega-\overline{\omega})^ka_i=0.
    \]
    Notice that $\omega_0\in\Omega_{red}\subset \mathbb{C}\setminus\mathbb{R}$.
    Then, $a_i=0$ for $i=1,\ \cdots,\ n$ and hence the elements in the sequence $\{[{{\gamma_1^{(k-1)}(\omega_0)}}],\ \cdots,\ [{\gamma_n^{(k-1)}(\omega_0)}]\}$
    are quaternionic right linear independent.

    Since $T^2-2\mathrm{Re}(\omega_0)T+\vert\omega_0\vert^2$ is a Fredholm operator and $\mathrm{ran}(T^2-2\mathrm{Re}(\omega_0)T+\vert\omega_0\vert^2)=\mathcal{H}_q$,
    we have
    \[
    \mathrm{dim}\ker(T^2-2\mathrm{Re}(\omega_0)T+\vert\omega_0\vert^2)=nk,
    \]
    and consequently,
    \[
    \mathrm{dim}(\ker(T^2-2\mathrm{Re}(\omega_0)T+|\omega_0|^2)^k\slash\ker(T^2-2\mathrm{Re}(\omega_0)T+\vert\omega_0\vert^2)^{k-1})=n.
    \]
    Therefore, $[{{\gamma_1^{(k-1)}(\omega_0)}}],\ \cdots,\ [{\gamma_n^{(k-1)}(\omega_0)}]$
    is s basis of
    \[
    \ker(T^2-2\mathrm{Re}(\omega_0)T+\vert\omega_0\vert^2)^k\slash\ker(T^2-2\mathrm{Re}(\omega_0)T+\vert\omega_0\vert^2)^{k-1}.
    \]
    This completes the proof.
    \end{proof}

    \begin{remark}
	In particular, if $T\in B_1^{sp}(\Omega_{red})$ and $\gamma(\omega)$ is a right holomorphic  frame of $E_T$ over a neighborhood $\Delta$ of $\omega\in\Omega_{red}$. Then,
	\[
	\{\gamma(\omega),\ \gamma'(\omega),\ \cdots,\ \gamma^{(n-1)}(\omega)\}
	\]
	is a basis of $\ker(T^2-2\mathrm{Re}(\omega)T+\vert\omega\vert^2)^{n}$.
	Furthermore, the operator  $(T^2-2\mathrm{Re}(\omega)T+|\omega|^2)$ acting on the invariant subspace $\ker(T^2-2\mathrm{Re}(\omega)T+\vert\omega\vert^2)^{n}$, under the basis $\{\gamma(\omega),\ \gamma'(\omega),\ \cdots,\ \gamma^{(n-1)}(\omega)\}$,
	 has the following matrix representation
	\[
	\begin{pmatrix}
		0 &	\omega-\overline{\omega} & 2 & 0 & 0 & \cdots \\
		0 &  0                    &2(\omega-\overline{\omega}) & 6  & 0 & \cdots \\
		0 & 0  &0 & 3(\omega-\overline{\omega}) & 12 & \cdots \\
		0 & 0  &0 &  0 & 4(\omega-\overline{\omega})  & \cdots \\
		\vdots & \vdots & \vdots & \vdots & \vdots & \ddots
	\end{pmatrix}.
\]
\end{remark}

\subsection{Examples: backward unilateral weighted shift operators}

In this subsetion, we study the  backward unilateral weighted shift operators in the class of Cowen-Douglas operators on quaternionic Hilbert spaces. We always assume that $\{\mathbf{e}_n\}_{n=1}^{\infty}$ is an orthonormal basis  of a quaternionic Hilbert space $\mathcal{H}_q$. The backward unilateral weighted shift operator $T:\mathcal{H}_q\rightarrow\mathcal{H}_q$ is defined by
	\[
T\mathbf{e}_{1}=0\ \ \text{and} \ \ 	T\mathbf{e}_{n+1}=\mathbf{e}_{n}w_n,\ \ \ w_n\in\mathbb{H}\setminus\{0\},\ \text{for} \ n=1,\ 2,\ \cdots.
	\]
Furthermore,
$$T(\sum_{n=1}^{\infty}e_n\alpha_n) = \sum_{n=1}^{\infty}e_nw_n\alpha_{n+1}$$
where $\alpha_n\in\mathbb{H},\ n=1,\ 2,\ \cdots $.

The sequence $\{w_n\}_{n=1}^{\infty}$ is called the weight sequence of $T$. Obviously, $T$ is bounded if and only if its weight sequence is bounded. Moreover, since
\[
\|T^n\|=\sup\limits_{i\in\mathbb{N}} |w_{i+1}w_{i+2}\cdots w_{i+n}|,
\]
it follows from Theorem \ref{SMT} that
\[
r_s(T)=\lim\limits_{n\to\infty}\sup\limits_{i\in\mathbb{N}} |w_{i+1} w_{i+2}\cdots w_{i+n}|^{1/n}.
\]

	\begin{lemma}\label{BUWSS}
	Suppose that $T \in L_r(\mathcal{H}_q)$ is a backward unilateral weighted shift operator with the weight sequence $\{w_n\}_{n=1}^{\infty}$. 	Let $\widetilde{T}$ be the backward unilateral weighted shift operator with the weight sequence $\{|w_n|\}_{n=1}^{\infty}$. Then, $T$ is unitarily equivalent to $\widetilde{T}$.
	\end{lemma}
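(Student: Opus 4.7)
The plan is to construct a ``diagonal'' quaternionic unitary $U$ on $\mathcal{H}_q$ such that $U^{-1}TU=\widetilde{T}$. The idea is exactly the complex-Hilbert-space trick of absorbing the phase of each weight into a change of basis, but executed carefully so that the non-commutativity of $\mathbb{H}$ does not cause trouble.

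First, I would define a sequence $\{u_n\}_{n=1}^{\infty}\subset \mathbb{H}$ recursively by
\[
u_1=1, \qquad u_{n+1}=w_n^{-1}\,u_n\,|w_n|,\ \ n\geq 1.
\]
A trivial induction gives $|u_{n+1}|=|w_n|^{-1}|u_n||w_n|=|u_n|=1$, so each $u_n$ lies on the unit sphere of $\mathbb{H}$. The recursion is designed precisely so that
\[
w_n\,u_{n+1}=u_n\,|w_n|, \qquad n\geq 1,
\]
which is the identity I will need at the final step.

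Next, I would define $U:\mathcal{H}_q\to \mathcal{H}_q$ by $U\mathbf{e}_n=\mathbf{e}_n u_n$ and extend right-linearly, i.e.\ $U(\sum_n \mathbf{e}_n\alpha_n)=\sum_n \mathbf{e}_n u_n\alpha_n$. Then $U\in L_r(\mathcal{H}_q)$ is a bounded right linear operator of norm $1$. To see that it is unitary I would verify that $\{\mathbf{e}_n u_n\}_{n=1}^\infty$ is an orthonormal basis: using properties (2) and (3) of the quaternionic inner product,
\[
\langle \mathbf{e}_n u_n, \mathbf{e}_m u_m\rangle_{\mathbb H}
=\overline{u_m}\langle \mathbf{e}_n,\mathbf{e}_m\rangle_{\mathbb H} u_n
=\overline{u_m}\delta_{nm}u_n
=\delta_{nm},
\]
so $U$ sends an orthonormal basis to an orthonormal basis. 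Its inverse acts by $U^{-1}\mathbf{e}_n=\mathbf{e}_n u_n^{-1}=\mathbf{e}_n\overline{u_n}$.

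Finally, I would check the intertwining relation on the basis. For each $n\geq 1$,
\[
U^{-1}TU\,\mathbf{e}_{n+1}=U^{-1}T(\mathbf{e}_{n+1}u_{n+1})
=U^{-1}(\mathbf{e}_n w_n u_{n+1})
=\mathbf{e}_n\,u_n^{-1}w_n u_{n+1}
=\mathbf{e}_n|w_n|=\widetilde{T}\mathbf{e}_{n+1},
\]
using the key identity $w_n u_{n+1}=u_n|w_n|$ chosen above, while $U^{-1}TU\,\mathbf{e}_1=0=\widetilde{T}\mathbf{e}_1$. Since both operators are bounded and agree on an orthonormal basis, $U^{-1}TU=\widetilde{T}$, which is the desired quaternionic unitary equivalence.

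The only real subtlety (and hence the main ``obstacle'') is the non-commutativity: one cannot simply take $u_n=\prod w_k/|w_k|$ as in the complex case, and one must be consistent about acting by the $u_n$'s on the right (so that $U$ remains right linear) while placing $|w_n|$ on the opposite side in the recursion. Once this bookkeeping is set up correctly, the remaining verifications are mechanical.
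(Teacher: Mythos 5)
Your proposal is correct and takes essentially the same approach as the paper: unwinding your recursion $u_{n+1}=w_n^{-1}u_n|w_n|$ (the real factors $|w_k|$ commute out) gives $u_n=\frac{\bar w_{n-1}\cdots \bar w_1}{|w_{n-1}|\cdots |w_1|}$, which is exactly the diagonal unitary the paper constructs, and the intertwining check is the same computation. The only difference is presentational — recursive versus closed-form definition of the diagonal entries.
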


    \begin{proof}
    Let $X$ be the diagonal operator defined by
    \[
    X\mathbf{e}_1=\mathbf{e}_1 \ \ \text{and} \ \ X\mathbf{e}_{n}=\mathbf{e}_n\cdot \frac{\bar{w}_{n-1}\bar{w}_{n-2}\cdots\bar{w}_1}{|w_{n-1}||w_{n-2}|\cdots|w_1|},\ \ \text{for} \ n\geq 2.
    \]
    It is easy to see that $X$ is a unitary operator and $TX\mathbf{e}_1=0=X\widetilde{T}\mathbf{e}_1$,
    \[
    TX\mathbf{e}_{n+1}=\mathbf{e}_{n}\cdot \frac{{w}_{n}\bar{w}_{n}\bar{w}_{n-1}\cdots\bar{w}_1}{|w_{n}||w_{n-1}|\cdots|w_1|}=\mathbf{e}_{n}\cdot \frac{|{w}_{n}|\bar{w}_{n-1}\cdots\bar{w}_1}{|w_{n-1}|\cdots|w_1|}=X\widetilde{T}\mathbf{e}_{n+1}, \ \ \text{for} \ n\geq 1.
    \]
Then, we have $TX=X\widetilde{T}$ and hence $T$ is unitarily equivalent to $\widetilde{T}$.
    \end{proof}	

Following the above result, we would only consider backward unilateral weighted shift operators with positive weight sequences without loss of generality.

\begin{lemma}\label{BUSEV}
Let $T\in L_r(\mathcal{H}_q)$ be a backward unilateral weighted shift operator with the positive weight sequence $\{w_n\}_{n=1}^{\infty}$, and let $s\in\sigma_{sp}(T)\setminus\mathbb{R}$.  If $\boldsymbol{x}=(x_1,\ x_2,\ x_3,\ \cdots)$ is an $S$-eigenvector of $T$ with respect to $s$. Then, for any $n\geq 2$,
\[
x_{n+1}=
\frac{1}{w_1w_2\cdots w_n}\left(\frac{s^n-\bar{s}^n}{s-\bar{s}}\cdot w_1\cdot x_2-\frac{s^n\bar{s}-s\bar{s}^n}{s-\bar{s}}\cdot x_1\right).
\]
\end{lemma}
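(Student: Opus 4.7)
The strategy is to extract a three-term recurrence from the $S$-eigenvector equation and then solve it explicitly.

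First I would write $\boldsymbol{x}=\sum_{n\ge1}\mathbf{e}_n x_n$ and compute the coordinates of $(T^2-2\mathrm{Re}(s)T+|s|^2\boldsymbol{I})\boldsymbol{x}$. Using $T\mathbf{e}_{n+1}=\mathbf{e}_n w_n$ and the right linearity of $T$, the $n$-th components of $T\boldsymbol{x}$ and $T^2\boldsymbol{x}$ are $w_n x_{n+1}$ and $w_n w_{n+1} x_{n+2}$ respectively. Since $w_n$, $2\mathrm{Re}(s)=s+\bar s$ and $|s|^2=s\bar s$ are all real (the weights are positive by hypothesis), they commute with every quaternion scalar, and the coordinate equations read
\[
w_n w_{n+1} x_{n+2} = (s+\bar s)\, w_n x_{n+1} - s\bar s\, x_n, \qquad n\ge 1.
\]

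Next I would prove the closed form by induction on $n$. The case $n=2$ is exactly the recurrence at $n=1$, since $\frac{s^{2}-\bar s^{2}}{s-\bar s}=s+\bar s$ and $\frac{s^{2}\bar s-s\bar s^{2}}{s-\bar s}=s\bar s$ (the division is meaningful because $s\notin\mathbb{R}$ forces $s-\bar s$ to be a nonzero, hence invertible, quaternion). For the inductive step, substitute the assumed expressions for $x_n$ and $x_{n+1}$ into the recurrence; what remains to check is the pair of identities
\[
(s+\bar s)A_n - s\bar s\, A_{n-1} = A_{n+1}, \qquad (s+\bar s)B_n - s\bar s\, B_{n-1} = B_{n+1},
\]
where $A_m:=(s^m-\bar s^m)(s-\bar s)^{-1}$ and $B_m:=(s^m\bar s-s\bar s^m)(s-\bar s)^{-1}$.

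The potential obstacle is non-commutativity. However $s$ and $\bar s$ commute with each other (both lie in the complex line $\mathbb{C}_{I_s}\subset\mathbb{H}$), and $s+\bar s$, $s\bar s$, $w_n$ are real, so they commute with everything. Consequently the two identities for $A_n$ and $B_n$ take place entirely inside the commutative subalgebra $\mathbb{C}_{I_s}$, and reduce to the telescoping calculation
\[
(s+\bar s)(s^n-\bar s^n) - s\bar s\,(s^{n-1}-\bar s^{n-1}) = s^{n+1}-\bar s^{n+1},
\]
together with its analogue obtained by multiplying by $\bar s$ on the right. Ordering only matters between the $(s,\bar s)$-polynomials and the vector entries $x_1,x_2$, and the claimed formula already places the $(s,\bar s)$-polynomials on the left, in precisely the order produced by iterating the recurrence. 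As an alternative (and perhaps conceptually cleaner) route, one may set $y_n:=w_1w_2\cdots w_{n-1}x_n$, reduce the recurrence to $y_{n+2}=(s+\bar s)y_{n+1}-s\bar s\,y_n$ with real coefficients, observe that the characteristic polynomial factors as $(\lambda-s)(\lambda-\bar s)$, and solve the linear system for the constants $c_1,c_2\in\mathbb{H}$ in $y_n=s^{n-1}c_1+\bar s^{n-1}c_2$ from the initial data $y_1=x_1$, $y_2=w_1x_2$; this yields the same closed form after rearrangement.
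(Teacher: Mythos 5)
Your proposal is correct and follows essentially the same route as the paper: both extract the three-term recurrence $w_nw_{n+1}x_{n+2}=(s+\bar s)w_nx_{n+1}-s\bar s\,x_n$ from the coordinate equations of $(T^2-2\mathrm{Re}(s)T+|s|^2\boldsymbol{I})\boldsymbol{x}=0$ and then solve it, using that the coefficients are real and that $s,\bar s$ commute inside $\mathbb{C}_{I_s}$. The only cosmetic difference is that the paper solves the recurrence by first reducing to the first-order relation $x_{n+1}-w_n^{-1}\bar s x_n=w_n^{-1}\cdots w_2^{-1}s^{n-1}(x_2-\bar s w_1^{-1}x_1)$ and telescoping, whereas you verify the closed form by induction (or via the characteristic polynomial), which amounts to the same computation.
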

    \begin{proof}
    Since $\boldsymbol{x}=(x_1,\ x_2,\ \cdots,\ x_n)$ is an $S$-eigenvector of $T$ with respect to $s$, we have
    \[
    \begin{pmatrix}
|s|^2 & -2s_0w_1 & w_1w_2 & 0 & 0 & 0 &  \cdots \\
0 & |s|^2 & -2s_0w_2 & w_2w_3 & 0 & 0 & \cdots \\
0 & 0 & |s|^2 & -2s_0w_3 & w_3w_4 & 0 & \cdots \\
0 & 0 & 0 & |s|^2 & -2s_0w_4 & w_4w_5 & \cdots \\
\vdots & \vdots & \vdots & \vdots & \vdots & \vdots & \ddots
\end{pmatrix}
\cdot
\begin{pmatrix}
x_1 \\
x_2 \\
x_3 \\
x_4 \\
\vdots
\end{pmatrix}
=0,
\]
where $s_0=\rm{Re}(s)$. Then, one can see that for $n\geq 2$,
\begin{equation*}\label{eq9}
x_{n+1}-w_n^{-1}\bar{s}x_n=w_{n}^{-1}\cdots w_2
^{-1}s^{n-1}(x_2-\bar{s}w_1^{-1}x_1).
\end{equation*}
Consequently. we obtain that
\[
\begin{aligned}
x_{n+1}=&w_{n}^{-1}\cdots w_2
^{-1}\left((\sum\limits_{k=0}^{n-1}s^{n-k}\bar{s}^{k})\cdot x_2-(\sum\limits_{k=0}^{n-2}s^{n-k}\bar{s}^{k})\cdot|s|^2w_1^{-1}x_1 \right)\\
= & \frac{1}{w_1w_2\cdots w_n}\left(\frac{s^n-\bar{s}^n}{s-\bar{s}}\cdot w_1\cdot x_2-\frac{s^n\bar{s}-s\bar{s}^n}{s-\bar{s}}\cdot x_1\right).
\end{aligned}
\]
    \end{proof}

\begin{proposition}\label{SPR}
Let $T\in L_r(\mathcal{H}_q)$ be a backward unilateral weighted shift operator with the positive weight sequence $\{w_n\}_{n=1}^{\infty}$. Then,
the $S$-point spectrum radius of $T$ is
\[
r_{sp}(T)=\lim\limits_{n\to\infty}\inf |w_{1}w_{2}\cdots w_{n}|^{{1}/{n}}.
\]
Furthermore, let
\[
\Omega=\{s\in\mathbb{H}\setminus\mathbb{R};\ |s|<r_{sp}(T)\}.
\]
If $r_{sp}(T)>0$, then $T\in B_2^{s}(\Omega)$.
\end{proposition}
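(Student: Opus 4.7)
The plan is to first establish the formula $r_{sp}(T)=\liminf_{n}(w_{1}\cdots w_{n})^{1/n}$ via the right-eigenvalue characterisation, and then verify the four defining conditions of $B_{2}^{s}(\Omega)$ in turn. By Theorem \ref{RSPE}, $\sigma_{sp}(T)=\sigma_{r}(T)$, so it suffices to compute right eigenvalues. The equation $T\boldsymbol{x}=\boldsymbol{x}\omega$ with $\boldsymbol{x}=\sum_{n}\mathbf{e}_{n}x_{n}$ unfolds to $w_{n}x_{n+1}=x_{n}\omega$; because each $w_{n}>0$ is real, this yields $x_{n+1}=x_{1}\omega^{n}/(w_{1}\cdots w_{n})$. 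The $\ell^{2}$ condition $\sum_{n}|\omega|^{2n}/(w_{1}\cdots w_{n})^{2}<\infty$ holds for $x_{1}\neq 0$ if and only if $|\omega|<\liminf_{n}(w_{1}\cdots w_{n})^{1/n}$ by the root test, and taking the supremum over $\sigma_{r}(T)$ delivers the formula.

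Now assume $L:=r_{sp}(T)>0$ and put $\Omega=\{s\in\mathbb{H}\setminus\mathbb{R};\ |s|<L\}$, which is open, axially symmetric, and connected (since $\mathbb{H}\setminus\mathbb{R}$ is). Condition (a) is immediate. For (d) I would apply Lemma \ref{BUSEV}: every $S$-eigenvector at $s\in\Omega$ is parametrised by its first two coordinates $(x_{1},x_{2})\in\mathbb{H}^{2}$, and the factor $p_{n}(s):=(s^{n}-\overline{s}^{n})/(s-\overline{s})$ is a real number obeying the Chebyshev-type bound $|p_{n}(s)|\leq n|s|^{n-1}$; together with $w_{1}\cdots w_{n}\geq\rho^{n}$ valid eventually for some $\rho\in(|s|,L)$, this gives geometric decay $|x_{n+1}|\leq Cn(|s|/\rho)^{n}$, so the right $\mathbb{H}$-linear map $(x_{1},x_{2})\mapsto\boldsymbol{x}$ is a bijection $\mathbb{H}^{2}\to\ker(T^{2}-2\mathrm{Re}(s)T+|s|^{2})$ and $\dim_{\mathbb{H}}\ker=2$. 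For (c) it suffices to show density of the right $\mathbb{H}$-span of the sections $\boldsymbol{v}(\omega):=\sum_{n}\mathbf{e}_{n}\omega^{n-1}/(w_{1}\cdots w_{n-1})\in\ker(T-\boldsymbol{I}\omega)$ as $\omega$ runs over $\Omega_{red}$; a vector $\boldsymbol{y}$ orthogonal to $\boldsymbol{v}(\omega)q$ for every $q\in\mathbb{H}$ gives $\langle\boldsymbol{y},\boldsymbol{v}(\omega)\rangle_{\mathbb{H}}=\sum_{n}\overline{\omega}^{n-1}y_{n}/(w_{1}\cdots w_{n-1})=0$, a series that by Cauchy-Schwarz converges absolutely on $\{|z|<L\}$ and is right holomorphic in $\overline{\omega}$; vanishing on the open set $\{\overline{\omega};\ \omega\in\Omega_{red}\}$ then forces every $y_{n}=0$ by the identity principle applied componentwise to the decomposition $f=f_{1}+\boldsymbol{j}f_{2}$.

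For (b), I would pass to the complex representation: because the weights are positive real, $T_{\mathbb{C}}=T'\oplus T'$ where $T'$ is the classical complex unilateral backward shift of weights $\{w_{n}\}$, so $(T^{2}-2\mathrm{Re}(\omega)T+|\omega|^{2})_{\mathbb{C}}=(T_{\mathbb{C}}-\omega_{red})(T_{\mathbb{C}}-\overline{\omega_{red}})$ decomposes into two copies of $(T'-\omega_{red})(T'-\overline{\omega_{red}})$, and both factors are surjective for $|\omega_{red}|=|\omega|<L$ by the classical fact that $T'\in B_{1}(\{|z|<L\})$; surjectivity of the complex representation of a quaternionic linear operator is equivalent to surjectivity of the operator itself. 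I expect this condition (b) to be the main hurdle: whereas (a), (c), (d) reduce to direct eigenvector analysis using tools already set up in the paper, surjectivity is not visible from eigenvectors, so one must either take the above complex-representation detour invoking classical Cowen-Douglas theory for $T'$, or construct a right inverse explicitly via a two-sided recurrence, carefully matching the Chebyshev-type bound $|p_{n}(s)|\leq n|s|^{n-1}$ against the $\liminf$ lower bound on $w_{1}\cdots w_{n}$.
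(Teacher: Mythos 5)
Your overall route coincides with the paper's: the radius formula and condition (d) come from the explicit right-eigenvector recursion (Theorem \ref{RSPE} plus Lemma \ref{BUSEV}), condition (c) from a density argument over the sections $\boldsymbol{v}(\omega)$, and condition (b) from the complex representation $T_{\mathbb{C}}=T'\oplus T'$. Where you differ, you are more careful than the paper: the paper disposes of (c) by asserting a span identity for $(\mathcal{H}_q)_{\mathbb{C}}$, whereas your orthogonal-complement computation of $\langle\boldsymbol{y},\boldsymbol{v}(\omega)\rangle_{\mathbb{H}}=\sum_n\overline{\omega}^{\,n-1}y_n/(w_1\cdots w_{n-1})$ together with the splitting $y_n=a_n+\boldsymbol{j}b_n$ (so that one component is a power series in $\overline{\omega}$ and the other in $\omega$) is exactly the device of Lemma \ref{CDP} and is cleaner; likewise your bound $|p_n(s)|\le n|s|^{n-1}$ and the geometric-decay estimate actually justify that the parametrization $(x_1,x_2)\mapsto\boldsymbol{x}$ lands in $\mathcal{H}_q$, which the paper leaves implicit.

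The gap is precisely where you predicted it: condition (b). The ``classical fact that $T'\in B_1(\{|z|<L\})$'' is not true for an arbitrary positive weight sequence. Surjectivity of $T'-\mu$ is equivalent to the forward shift $S$ with the same weights being bounded below at $\overline{\mu}$, and that is governed by the \emph{tail} products $w_k\cdots w_{k+n-1}$, not by the initial products $w_1\cdots w_n$ that determine $L=\liminf_n(w_1\cdots w_n)^{1/n}$. Concretely, take $w_n=1$ for all $n$ except $w_{n_k}=1/k$ along a very sparse sequence such as $n_k=2^{2^k}$: then $L=1$, but after deleting the sparse small weights (a compact, in fact norm-null, perturbation) $S$ becomes a direct sum of nilpotent unweighted forward shifts of unboundedly growing size, which admit unit approximate eigenvectors at every $\nu$ with $|\nu|<1$; hence $S-\overline{\mu}$ is not bounded below and $T'-\mu$ is not surjective for any $|\mu|<1$, so (b) fails throughout $\Omega$. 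Thus neither your appeal to the classical fact nor your alternative plan of a two-sided recurrence controlled only by the $\liminf$ bound can work; one needs an additional hypothesis (e.g.\ $\inf_n w_n>0$ together with control of $\lim_n\inf_k(w_k\cdots w_{k+n-1})^{1/n}$). To be fair, the paper's own proof of (b) is the single sentence ``it is not difficult to see that $(T^2-2\mathrm{Re}(s)T+|s|^2)_{\mathbb{C}}$ is surjective,'' so it contains the identical gap; but as written, neither argument --- nor, without a strengthened hypothesis, the statement itself --- goes through for all positive weight sequences.
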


    \begin{proof}
    By Lemma \ref{BUSEV}, one can see that if $s\in\mathbb{H}$ satisfies
    \[
    |s|<\lim\limits_{n\to\infty}\inf |w_{1}w_{2}\cdots w_{n}|^{{1}/{n}},
    \]
    then $s\in\sigma_{sp}(T)$ and $\dim_{\mathbb{H}}(T^2-2\mathrm{Re}(s)T+|s|^2)=2$.

We claim that if $s\in\mathbb{H}$ satisfies
\[
|s|>\lim\limits_{n\to\infty}\inf |w_{1}w_{2}\cdots w_{n}|^{{1}/{n}},
\]
then $s\notin\sigma_{sp}(T)$. Suppose that $s\in\sigma_{sp}(T)$. By Theorem \ref{RSPE}, we also have $s\in\sigma_{r}(T)$. Notice that the vector $\boldsymbol{x}\in\ker(T-\boldsymbol{I}s)$ must have the following form
\[
\boldsymbol{x}=(q, \frac{q\cdot s}{w_1}, \cdots, \frac{q\cdot s^n}{w_1w_2\cdots w_n}, \cdots),
\]
where $q\in \mathbb{H}$. However, following from $|s|>\lim\limits_{n\to\infty}\inf |w_{1}w_{2}\cdots w_{n}|^{{1}/{n}}$, the vector $\boldsymbol{x}$ is not in $\mathcal{H}_q$ since its norm is infinity. This completes the proof of the claim.

Following from the above discussions, the $S$-point spectrum radius of $T$ is
\[
r_{sp}(T)=\lim\limits_{n\to\infty}\inf |w_{1}w_{2}\cdots w_{n}|^{{1}/{n}}.
\]

Now, suppose that $r_{sp}(T)>0$. For any $s\in\Omega_{red}$, consider the operator $T^2-2\mathrm{Re}(s)T+|s|^2$. It is not difficult to see that $(T^2-2\mathrm{Re}(s)T+|s|^2)_{\mathbb{C}}$ is a surjective on $(\mathcal{H}_q)_{\mathbb{C}}$. Then, $T^2-2\mathrm{Re}(s)T+|s|^2$ is a surjective on $\mathcal{H}_{q}$. Notice that
\[
\left({\bigvee}_{\mathbb{C}}\{\ker(T-\boldsymbol{I}s)_{\mathbb{C}}; \ s\in\Omega_{red}\}\right){\bigvee}_{\mathbb{C}}\left({\bigvee}_{\mathbb{C}}\{\ker(T-\boldsymbol{I}\bar{s})_{\mathbb{C}}; \ s\in\Omega_{red}\}\right)=(\mathcal{H}_q)_{\mathbb{C}}.
\]
By Theorem \ref{RSPE}, we  have
\[
{\bigvee}_{\mathbb{H}} \{\ker(T^2-2\mathrm{Re}(s) T+\vert s \vert^2),\ s\in\Omega_{red}\}=\mathcal{H}_q.
\]
Therefore, we obtain that $T\in B_2^{s}(\Omega) (=B_2^{s}(\Omega_{red}))$.
  \end{proof}

Let $\Omega_1$ and $\Omega_2$ be two distinct domain in the upper half plane. Let $m$ and $n$ be two distinct positive integer. It is possible that $B_m^{s}(\Omega_1)\bigcap B_n^{s}(\Omega_2)\neq\emptyset$.
We provide the following example, which comes from \cite{HT}, to illustrate this phenomenon.

\begin{example}\label{TCI}
	Let $T$ be the standard backward unilateral shift operator on $\mathcal{H}_q$. Let
	\[
	\Omega_1=\{z\in\mathbb{C};\ |z-\frac{i}{2}|<1,\ |z+\frac{i}{2}|>1,\ \mathrm{Im}(z)>0\}
	\]
	and
	\[
	\Omega_2=\{z\in\mathbb{C};\ |z-\frac{i}{2}|<1,\ |z+\frac{i}{2}|<1,\ \mathrm{Im}(z)>0\}.
	\]
	Then, $T+\frac{i}{2}I\in B_1^{sp}(\Omega_1)$ and $T+\frac{i}{2}I\in B_2^{sp}(\Omega_2)$.
	\end{example}
		
		\begin{proof}
		The matrix representation of the operator $T+\frac{i}{2}$ under the orthonormal basis  $\{e_n\}_{n=1}^{\infty}$ is
    $$
    \begin{pmatrix}
    \frac{i}{2} & 1 & 0 & 0 & \cdots \\
    0 & \frac{i}{2} & 1 & 0 & \cdots \\
    0 & 0 & \frac{i}{2} & 1 & \cdots \\
    \vdots & \vdots & \vdots & \vdots & \ddots
    \end{pmatrix}.
$$
     For any $\lambda\in\Omega_1$, write
    $$
    \boldsymbol{v}_1=\left(1,\ \lambda-\frac{i}{2},\ \left(\lambda-\frac{i}{2}\right)^2,\ \left(\lambda-\frac{i}{2}\right)^3,\ \cdots\right)^T.
    $$
   Then, one can see that
   \[
   \ker(T^2-2\mathrm{Re}(\lambda) T+\vert \lambda \vert^2)=\{\boldsymbol{v}_1q;\ q\in\mathbb{H}\},
   \]
 and hence $T\in B_1^{s}(\Omega_1)$.

     For any $\omega\in\Omega_2$, write
     \begin{align*}
    &\boldsymbol{v}_2=\left(1,\ \omega-\frac{i}{2},\ \left(\omega-\frac{i}{2}\right)^2,\ \left(\omega-\frac{i}{2}\right)^3,\ \cdots\right)^T, \\
    &\boldsymbol{v}_3=\left(1,\ \omega+\frac{i}{2},\ \left(\omega+\frac{i}{2}\right)^2,\ \left(\omega+\frac{i}{2}\right)^3,\ \cdots\right)^T.
    \end{align*}
Then, one can see that
\[
\ker(T^2-2\mathrm{Re}(\omega) T+\vert \omega \vert^2)=\{\boldsymbol{v}_2p+\boldsymbol{v}_3q;\ p,\ q\in\mathbb{H}\},
\]
and hence  $T\in B_2^{s}(\Omega_2)$.
		\end{proof}

		\section{Geometric rigidity for operators in $B_n^{s}(\Omega_q)$}
In this section, we will give a rigidity theorem for Hermitian right holomorphic quaternionic vector bundles. Furthermore, it is shown that two operators in $B_n^{s}(\Omega_q)$ are unitarily equivalent if and only if the associate bundles are equivalent as Hermitian right holomorphic quaternionic vector bundles.

\begin{definition}\label{CG}
	Let $\Omega$ be a domain in $\mathbb{C}$ and let $t_1,\ t_2:\ \Omega\rightarrow\mathcal{G}r(n,\ \mathcal{H}_q)$ be two right holomorphic curves. We
	say that $t_1$ and $t_2$ are congruent if there exists a quaternionic unitary operator $U$ on $\mathcal{H}_q$ such that $t_2=Ut_1$.
\end{definition}

\begin{definition}\label{QBE}
	Let $\Delta$ be a domain in $\mathbb{C}$ and let $t_1,\ t_2:\Delta\rightarrow\mathcal{G}r(n,\ \mathcal{H}_q)$ be two right holomorphic curves. We say that $E_{t_1}$ and $E_{t_2}$ are equivalent as Hermitian right holomorphic quaternionic vector bundles, if there exists a bundle map $\Phi: E_{t_1} \rightarrow E_{t_2}$ satisfying the following conditions.
	\begin{enumerate}
		\item[(1)] \ For any $\omega\in\Delta$, $\Phi\mid_{t_1(\omega)}:t_1(\omega)\rightarrow t_2(\omega)$ is a quaternionic right linear isomorphism. \\
		\item[(2)] \ For any right holomorphic cross-section $\gamma$ in $E_{t_1}$, $\Phi(\gamma)$ is a right holomorphic cross-section  in $E_{t_2}$.\\
		\item[(3)] \ For any right holomorphic cross-sections $\gamma$ and $\widetilde{\gamma}$ in $E_{t_1}$,
		\[
		\langle \gamma^{(m)},\ \widetilde{\gamma}\rangle_{\mathbb{H}}=\langle(\Phi(\gamma))^{(m)},\ \Phi(\widetilde{\gamma}) \rangle_{\mathbb{H}}.
		\]
	\end{enumerate}
Moreover, we call $\Phi$ an equivalent map from $E_{t_1}$ to $E_{t_2}$.
\end{definition}

The condition $(3)$ in the  above definition could be seemed as a strong version of isometry. It is worthy noticing that $\langle \gamma,\ \widetilde{\gamma}\rangle_{\mathbb{H}}=\langle\Phi(\gamma),\ \Phi(\widetilde{\gamma}) \rangle_{\mathbb{H}}$ does not imply $\langle \gamma^{(m)},\ \widetilde{\gamma}\rangle_{\mathbb{H}}=\langle(\Phi(\gamma))^{(m)},\ \Phi(\widetilde{\gamma}) \rangle_{\mathbb{H}}$, which is different from Hermitian holomorphic complex vector bundles (for instance in \cite{CD}). The essential reason is also the noncommutativity of quaternion multiplication. Let $f$ and $g$ be two real differentiable quaternionic-valued functions over a domain in the complex plane. The total differential of $f$ is as follows,
\[
	df=\frac{\partial f}{\partial x}dx+\frac{\partial f}{\partial y}dy
	= \frac{\partial f}{\partial z}dz+\frac{\partial f}{\partial \bar{z}}d\bar{z}.
\]
We always have
\[
d(fg)=(df)g+f(dg).
\]
However, if $f$ and $g$ are two quaternionic-valued right holomorphic functions, it happens in general that
\[
\frac{\partial (\overline{g}f)}{\partial z}dz\ne \overline{g}\frac{\partial f}{\partial z}dz,\ \ \ \ \ \
\frac{\partial (\overline{g}f)}{\partial \overline{z}}d\overline{z}\ne  \overline{(\frac{\partial g}{\partial z}dz)} f.
\]
Let $f(z)=f_1(z)+\boldsymbol{j}f_2(z)$ and $g(z)=g_1(z)+\boldsymbol{j}g_2(z)$ where $f_1$, $f_2$, $g_1$ and $g_2$ are complex-valued holomorphic functions. Then
    $$fg=(f_1g_1-\overline{f_2}g_2)+\boldsymbol{j}(f_2g_1+\overline{f_1}g_2),\ .$$

    However, when $f,\ g$ are quaternionic-valued right holomorphic functions, then
    $$\begin{aligned}
    \overline{g}f&=(\overline{g_1}f_1-\overline{g_2}f_2)+\boldsymbol{j}(g_1f_2-g_2f_1), \\
    \frac{\partial (\overline{g}f)}{\partial z}dz & =((\overline{g_1}f_1'+\overline{g_2}f'_2)+\boldsymbol{j}(g_1'f_2+g_1f_2'-g_2'f_1-g_2f_1'))dz, \\
    \frac{\partial (\overline{g}f)}{\partial \overline{z}}d\overline{z} & =(\overline{g'}f_1+\overline{g_2'}f_2)d\overline{z}, \\
    \overline{g}\frac{\partial f}{\partial z}dz & =((\overline{g_1}f_1'+\overline{g_2}f'_2)+\boldsymbol{j}(g_1f_2'-g_2f_1'))dz ,  \\
    \overline{(\frac{\partial g}{\partial z}dz)}
    f & =(\overline{g'}f_1+\overline{g_2'}f_2)d\overline{z}+\boldsymbol{j}(g_1'f_2
    -g_2'f_1)dz.
    \end{aligned}$$
    Consequently,
    $$
    d (\overline{g}f)=\frac{\partial (\overline{g}f)}{\partial z}dz+\frac{\partial (\overline{g}f)}{\partial \overline{z}}dz =
    \overline{g}\frac{\partial f}{\partial z}dz+\overline{(\frac{\partial g}{\partial z}dz)}f,
    $$
    but
    $$
    \frac{\partial \overline{g}f}{\partial z}dz\ne \overline{g}\frac{\partial f}{\partial z}dz,\ \ \ \ \ \ \
    \frac{\partial \overline{g}f}{\partial \overline{z}}dz\ne \overline{(\frac{\partial g}{\partial z}dz)}f.
    $$
Therefore, for any right holomorphic cross-sections $\gamma$ and $\widetilde{\gamma}$,
    \begin{equation}\label{TDiff}
    d\langle\gamma, \ \widetilde{\gamma}\rangle_{\mathbb{H}}=\frac{\partial}{\partial z}\langle\gamma, \ \widetilde{\gamma}\rangle_{\mathbb{H}}dz+\frac{\partial}{\partial\overline{z}}\langle\gamma, \ \widetilde{\gamma}\rangle_{\mathbb{H}}d\overline{z}=\langle\frac{\partial\gamma}{\partial z} dz, \ \widetilde{\gamma}\rangle_{\mathbb{H}}+\langle\gamma, \ \frac{\partial\widetilde{\gamma}}{\partial z}dz\rangle_{\mathbb{H}},
     \end{equation}
    but
    $$
\frac{\partial}{\partial z}\langle\gamma, \ \widetilde{\gamma}\rangle_{\mathbb{H}}dz\neq \langle\frac{\partial\gamma}{\partial z} dz, \ \widetilde{\gamma}\rangle_{\mathbb{H}}, \ \ \  \frac{\partial}{\partial\overline{z}}\langle\gamma, \ \widetilde{\gamma}\rangle_{\mathbb{H}}d\overline{z}\neq \langle\gamma, \ \frac{\partial\widetilde{\gamma}}{\partial z}dz\rangle_{\mathbb{H}}.
$$
So,  $\langle \gamma,\ \widetilde{\gamma}\rangle_{\mathbb{H}}=\langle\Phi(\gamma),\ \Phi(\widetilde{\gamma}) \rangle_{\mathbb{H}}$ does not imply $\langle \gamma^{(m)},\ \widetilde{\gamma}\rangle_{\mathbb{H}}=\langle(\Phi(\gamma))^{(m)},\ \Phi(\widetilde{\gamma}) \rangle_{\mathbb{H}}$.

\begin{theorem}[Rigidity]\label{RT}
	Let $\Delta$ be a domain in $\mathbb{C}$ and let $t_1,\ t_2:\Delta\rightarrow\mathcal{G}r(n,\ \mathcal{H}_q)$ be two right holomorphic curves satisfying $\bigvee\limits_{\mathbb{H}}\{ t_1(\omega),\ \omega\in\Delta\}=\bigvee\limits_{\mathbb{H}}\{ t_2(\omega),\ \omega\in\Delta\}=\mathcal{H}_q$.
    Then $t_1$ and $t_2$ are congruent if and only if $E_{t_1}$ and $E_{t_2}$ are equivalent as Hermitian right holomorphic quaternionic vector bundles over $\Delta$.
\end{theorem}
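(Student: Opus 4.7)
The plan is to prove both directions of the equivalence. The forward direction is straightforward: if $t_2 = U t_1$ for some quaternion unitary $U$ on $\mathcal{H}_q$, then $\Phi := U|_{E_{t_1}}$ satisfies the three conditions of bundle equivalence, because $U$ is independent of $\omega$ (so $(U\gamma)^{(m)} = U\gamma^{(m)}$) and unitarity preserves the inner product.

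For the reverse direction, I would fix $\omega_0 \in \Delta$ and choose a right holomorphic frame $\{\gamma_i\}_{i=1}^n$ of $E_{t_1}$ on a neighborhood $\Delta_0$ of $\omega_0$. Set $\widetilde{\gamma}_i := \Phi\gamma_i$, which is a right holomorphic frame of $E_{t_2}$ on $\Delta_0$ by conditions (1) and (2). The strategy is to define $U$ by
\[
U\Big(\sum_{i,k} \gamma_i^{(k)}(\omega_0)\, a_{i,k}\Big) := \sum_{i,k} \widetilde{\gamma}_i^{(k)}(\omega_0)\, a_{i,k}
\]
on the right $\mathbb{H}$-span of the derivatives at $\omega_0$, then extend $U$ to a unitary on $\mathcal{H}_q$ and verify that it realizes $t_2 = U t_1$ globally.

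The main step, and the principal obstacle, is to promote condition (3) to the mixed-derivative identity
\[
\langle \gamma^{(m)}(\omega),\, \widetilde{\gamma}^{(l)}(\omega)\rangle_{\mathbb{H}} = \langle (\Phi\gamma)^{(m)}(\omega),\, (\Phi\widetilde{\gamma})^{(l)}(\omega)\rangle_{\mathbb{H}}, \qquad m, l \geq 0,
\]
for all right holomorphic cross-sections $\gamma, \widetilde{\gamma}$ of $E_{t_1}$. I would prove this by induction on $l$, the case $l = 0$ being condition (3). For the inductive step, applying the total differential $d$ to the level-$l$ identity and invoking the noncommutative product rule \eqref{TDiff} gives on each side a quaternion-valued 1-form of the shape $A\, d\omega + d\bar\omega\, B$, in which the $d\omega$-coefficient is the level-$(m+1, l)$ inner product and the $d\bar\omega$-coefficient is the level-$(m, l+1)$ inner product. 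The $d\omega$-parts cancel between the two sides by the inductive hypothesis at $(m+1, l)$, leaving $d\bar\omega \cdot B = 0$ where $B$ is the level-$(m, l+1)$ difference; writing $B = b_1 + \boldsymbol{j} b_2$ with $b_1, b_2 \in \mathbb{C}$ and using $d\bar\omega \cdot \boldsymbol{j} = \boldsymbol{j}\, d\omega$ yields $d\bar\omega \cdot B = b_1\, d\bar\omega + \boldsymbol{j} b_2\, d\omega$, which forces $b_1 = b_2 = 0$, hence $B = 0$, closing the induction.

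Once the mixed identity is established, the remainder is routine. Expanding $\|\sum_{i,k} \widetilde{\gamma}_i^{(k)}(\omega_0)\, a_{i,k}\|^2$ in Gram matrix entries and applying the mixed identity shows that $U$ is well-defined and isometric on the span. Density of its domain in $\mathcal{H}_q$ follows from Taylor expansion and the identity principle: any $x \in \mathcal{H}_q$ orthogonal to every $\gamma_i^{(k)}(\omega_0)$ makes the right holomorphic function $\omega \mapsto \langle \gamma_i(\omega), x\rangle_{\mathbb{H}}$ vanish on $\Delta_0$ and then on all of $\Delta$ by patching with local frames; together with the spanning hypothesis this forces $x = 0$. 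The symmetric argument for $t_2$ gives density of the image, so $U$ is unitary. Finally, applying $U$ term-by-term to the Taylor series $\gamma_i(\omega) = \sum_k \gamma_i^{(k)}(\omega_0)(\omega-\omega_0)^k/k!$ yields $U\gamma_i(\omega) = \widetilde{\gamma}_i(\omega)$ on $\Delta_0$, hence $U t_1(\omega) = t_2(\omega)$ there, and analytic continuation along paths in the connected set $\Delta$ extends the identity globally.
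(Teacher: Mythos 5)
Your proposal is correct and follows essentially the same route as the paper's proof: the forward direction via $U$ itself, and the reverse direction via an induction on $l$ that upgrades condition (3) of Definition \ref{QBE} to the mixed identity $\langle \gamma_i^{(m)},\gamma_j^{(l)}\rangle_{\mathbb{H}}=\langle\widetilde{\gamma_i}^{(m)},\widetilde{\gamma_j}^{(l)}\rangle_{\mathbb{H}}$ using the total-differential formula \eqref{TDiff}, followed by defining $U$ on the derivatives at a fixed $\omega_0$, checking it is a surjective isometry, and recovering $U\gamma_i=\widetilde{\gamma_i}$ by Taylor expansion. Your explicit justification that $d\bar\omega\cdot B=0$ forces $B=0$ (via the $\boldsymbol{j}$-decomposition) and the remark on analytic continuation make precise two steps the paper leaves implicit, but they are refinements of the same argument rather than a different one.
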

	\begin{proof}
	Suppose that $t_1$ and $t_2$ are congruent. Let $U$ be a unitary operator such that $Ut_1=t_2$. Obviously, $U$ is just an equivalent map from $E_{t_1}$ to $E_{t_2}$.
	
	Now, suppose that $\Phi:E_{t_1}\rightarrow E_{t_2}$ is an equivalent map. Let $\{\gamma_i\}_{i=1}^{n}$ be a right holomorphic frame of $E_{t_1}$ over $\Delta$. Write $\widetilde{\gamma_i}=\Phi(\gamma_i)$ for $i=1,\cdots,n$. Then, $\{\widetilde{\gamma_i}\}_{i=1}^{n}$ is a right holomorphic frame of $E_{t_2}$ over $\Delta$.

   We claim that for any $m,k\in\mathbb{N}$ and any $i,j=1,\cdots,n$,
\begin{equation}\label{eq1}
	\langle\gamma_i^{(m)},\ \gamma_j^{(k)}\rangle_{\mathbb{H}}\ =\langle\widetilde{\gamma_i}^{(m)},\ \widetilde{\gamma_i}^{(k)}\rangle_{\mathbb{H}}.
\end{equation}

Following from Definition \ref{QBE}, one can see that the equation (\ref{eq1}) holds for $k=0$ and all $m\in\mathbb{N}$. Assume that the equation (\ref{eq1}) holds for $k=0,1,\cdots, l$ and all $m\in\mathbb{N}$.

Given any $m\in\mathbb{N}$. Following from the equation (\ref{TDiff}), we have
    \[
    \begin{aligned}
	d\langle\gamma_i^{(m)}, \ {\gamma_j}^{(l)}\rangle_{\mathbb{H}}=&\langle\gamma_i^{(m+1)}dz, \ {\gamma_j}^{(l)}\rangle_{\mathbb{H}}+\langle\gamma_i^{(m)}, \ {\gamma_j}^{(l+1)}dz\rangle_{\mathbb{H}} \\
	=&\langle\gamma_i^{(m+1)}, \ {\gamma_j}^{(l)}\rangle_{\mathbb{H}}dz+d\bar{z}\langle\gamma_i^{(m)}, \ {\gamma_j}^{(l+1)}\rangle_{\mathbb{H}}.
	\end{aligned}
\]
Similarly,
    \[
	d\langle\widetilde{\gamma_i}^{(m)}, \ \widetilde{\gamma_j}^{(l)}\rangle_{\mathbb{H}}=\langle\widetilde{\gamma_i}^{(m+1)}, \ \widetilde{\gamma_j}^{(l)}\rangle_{\mathbb{H}}dz+d\bar{z}\langle\widetilde{\gamma_i}^{(m)}, \ \widetilde{\gamma_j}^{(l+1)}\rangle_{\mathbb{H}}.
\]
By the assumption, we have known that
\[
d\langle\gamma_i^{(m)}, \ {\gamma_j}^{(l)}\rangle_{\mathbb{H}}=d\langle\widetilde{\gamma_i}^{(m)}, \ \widetilde{\gamma_j}^{(l)}\rangle_{\mathbb{H}} \ \ \ \text{and} \ \ \ \langle\gamma_i^{(m+1)}dz, \ {\gamma_j}^{(l)}\rangle_{\mathbb{H}}=\langle\widetilde{\gamma_i}^{(m+1)}, \ \widetilde{\gamma_j}^{(l)}\rangle_{\mathbb{H}}dz.
\]
Then
    \[
\begin{aligned}
	d\bar{z}\langle\gamma_i^{(m)}, \ {\gamma_j}^{(l+1)}\rangle_{\mathbb{H}}=&d\langle\gamma_i^{(m)}, \ {\gamma_j}^{(l)}\rangle_{\mathbb{H}}-\langle\gamma_i^{(m+1)}, \ {\gamma_j}^{(l)}\rangle_{\mathbb{H}}dz \\
	=& 	d\langle\widetilde{\gamma_i}^{(m)}, \ \widetilde{\gamma_j}^{(l)}\rangle_{\mathbb{H}}-\langle\widetilde{\gamma_i}^{(m+1)}, \ \widetilde{\gamma_j}^{(l)}\rangle_{\mathbb{H}}dz \\
	=& d\bar{z}\langle\widetilde{\gamma_i}^{(m)}, \ \widetilde{\gamma_j}^{(l+1)}\rangle_{\mathbb{H}}.
\end{aligned}
\]
This proves the claim.

Given any $\omega_0\in\Delta$. Notice that
\[
{\bigvee}_{\mathbb{H}}\{ t_1(\omega);\ \omega\in\Delta\}={\bigvee}_{\mathbb{H}}\{ t_2(\omega);\ \omega\in\Delta\}=\mathcal{H}_q
\]
implies
\[
\begin{aligned}
& {\bigvee}_{\mathbb{H}}\{\gamma_i^{(m)}(\omega_0);\ i=1,\cdots,n \ \text{and} \ m\in\mathbb{N}\} \\
=&{\bigvee}_{\mathbb{H}}\ \widetilde{\gamma_i}^{(m)}(\omega_0);\ i=1,\cdots,n \ \text{and} \ m\in\mathbb{N}\} \\
=&\mathcal{H}_q.
\end{aligned}
\]
We define an operator $U_{\omega}:\mathcal{H}_q\rightarrow \mathcal{H}_q$ by
\[
U\gamma_i^{(m)}(\omega_0)=\widetilde{\gamma_i}^{(m)},
\]
for all $i=1,\cdots,n$ and $m\in\mathbb{N}$.

Since for all $i, j=1,\cdots,n$ and $m,k\in\mathbb{N}$,
    \[
    \langle U_{\omega}\gamma_i^{(m)}(\omega),\ U_{\omega}\gamma_j^{(k)}(\omega)\rangle=\langle\widetilde{\gamma_i}^{(m)}, \ \widetilde{\gamma_j}^{(k)}\rangle_{\mathbb{H}}=\langle\gamma_i^{(m)}(\omega),\ \gamma_j^{(k)}(\omega)\rangle,
    \]
 we obtain that $U$ is a surjective isometry. Moreover,  for any $\omega$ in a neighborhood of $\omega_0$, we have
    \[
    \begin{aligned}
     U(\gamma_i(\omega))
     = & U(\sum_{m=0}^{\infty}\gamma_i^{(m)}(\omega_0)\frac{(\omega-\omega_0)^{m}}{m!}) \\
     = & \sum_{m=0}^{\infty}\widetilde{\gamma_i}^{(m)}(\omega_0)\frac{(\omega-\omega_0)^{m}}{m!} \\
     = & \widetilde{\gamma_i}(\omega),
    \end{aligned}
\]
for every $i=1,\cdots,n$. Therefore, $U$ is a unitary operator such that $t_2=Ut_1$. The proof is finished.
    \end{proof}

    \begin{theorem}\label{MT}
	Let $T_1,\ T_2\in B_n^{s}(\Omega_q)$. Then, $T_1$ and $T_2$ are quaternion unitarily equivalent if and only if $E_{T_1}$ and $E_{T_2}$ are equivalent as Hermitian right holomorphic quaternionic vector bundles.
	\end{theorem}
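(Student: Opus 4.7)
The plan is to deduce Theorem \ref{MT} from the Rigidity Theorem (Theorem \ref{RT}) by verifying that quaternion unitary equivalence of operators in $B_n^{s}(\Omega_q)$ corresponds exactly to congruence of the induced right holomorphic curves $t_{T_1},\ t_{T_2}:\Omega_{red}\to\mathcal{G}r(n,\mathcal{H}_q)$. In both directions the essential bridge between operator theory and bundle geometry is the recurrence
\begin{equation*}
T\gamma^{(k)}(\omega)=\gamma^{(k)}(\omega)\omega+k\gamma^{(k-1)}(\omega)
\end{equation*}
from Lemma \ref{orthonormal basis P}, together with the fact (Lemma \ref{Basis} and Remark \ref{orthonormal basis PC}) that the derivatives $\{\gamma_i^{(k)}(\omega_0):i=1,\ldots,n,\ k\in\mathbb{N}\}$ of any right holomorphic frame $\{\gamma_i\}_{i=1}^{n}$ span $\mathcal{H}_q$.

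For the forward direction, if $U$ is a quaternion unitary operator on $\mathcal{H}_q$ with $UT_1=T_2U$, I claim $\Phi:=U|_{E_{T_1}}$ is an equivalence of Hermitian right holomorphic quaternionic vector bundles. The intertwining relation forces $U$ to send each fibre $\ker(T_1^2-2\mathrm{Re}(\omega)T_1+|\omega|^2)$ isomorphically onto $\ker(T_2^2-2\mathrm{Re}(\omega)T_2+|\omega|^2)$, so condition (1) of Definition \ref{QBE} holds. Since $U$ is bounded and right $\mathbb{H}$-linear and the derivative is a complex difference quotient, $U\gamma$ is right holomorphic whenever $\gamma$ is, with $(U\gamma)^{(m)}=U\gamma^{(m)}$; together with the unitarity of $U$ this yields conditions (2) and (3).

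For the converse, suppose $\Phi:E_{T_1}\to E_{T_2}$ is such an equivalence. Theorem \ref{RT} produces a quaternion unitary $U$ on $\mathcal{H}_q$ with $Ut_{T_1}=t_{T_2}$. Inspection of the proof of Theorem \ref{RT} shows that $U$ is determined on the total spanning set $\{\gamma_i^{(k)}(\omega_0)\}$ by $U\gamma_i^{(k)}(\omega_0)=\widetilde{\gamma_i}^{(k)}(\omega_0)$, where $\widetilde{\gamma_i}=\Phi(\gamma_i)$. Applying the recurrence from Lemma \ref{orthonormal basis P} separately to the pairs $(T_1,\gamma_i)$ and $(T_2,\widetilde{\gamma_i})$, and using right linearity of $U$, I compute
\begin{align*}
UT_1\gamma_i^{(k)}(\omega_0)&=U\bigl(\gamma_i^{(k)}(\omega_0)\omega_0+k\gamma_i^{(k-1)}(\omega_0)\bigr)\\
&=\widetilde{\gamma_i}^{(k)}(\omega_0)\omega_0+k\widetilde{\gamma_i}^{(k-1)}(\omega_0)\\
&=T_2\widetilde{\gamma_i}^{(k)}(\omega_0)=T_2U\gamma_i^{(k)}(\omega_0).
\end{align*}
Since the vectors $\{\gamma_i^{(k)}(\omega_0)\}$ span $\mathcal{H}_q$, this extends by continuity and right linearity to $UT_1=T_2U$.

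The heavy lifting was already done in Theorem \ref{RT}; what remains is bookkeeping, and the only genuine subtlety is staying alert to the noncommutativity of $\mathbb{H}$. In particular, I must invoke right linearity to move $U$ past the quaternion scalar $\omega_0$, so that $U(\boldsymbol{x}\omega_0)=(U\boldsymbol{x})\omega_0$ even though $\omega_0\notin\mathbb{R}$, and in the forward direction the verification that $U$ commutes with differentiation uses crucially that the difference quotient $(\Delta z)^{-1}$ involves a complex scalar $\Delta z$, so right linearity of $U$ applies cleanly.
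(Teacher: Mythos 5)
Your proposal is correct and follows essentially the same route as the paper: both directions reduce to the Rigidity Theorem \ref{RT}, with the forward direction checking that the intertwining unitary carries fibres to fibres. The only cosmetic difference is in the converse, where the paper intertwines $T_1$ and $T_2$ via the zeroth-order relation $T_1\gamma_j(\omega)=\gamma_j(\omega)\omega$ for all $\omega$ in a subdomain (using only the conclusion $Vt_1=t_2$ of Theorem \ref{RT}), while you apply the derivative recurrence of Lemma \ref{orthonormal basis P} at a single point $\omega_0$ after inspecting how $U$ is constructed inside the proof of Theorem \ref{RT}; both rest on the same spanning facts and are equally valid.
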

    \begin{proof}
    Recall that $t_i(\omega)=\ker(T_i^2-2\mathrm{Re}(\omega)T_i+|\omega|^2)$, $i=1, 2$, for any $\omega\in\Omega_{red}$ and $E_{T_i}$ is the pull-back right holomorphic quaternionic vector bundles. Suppose that $U$ is a  quaternionic unitary operator $U$ on $\mathcal{H}_q$ such that
    $UT_1=T_2U$. For any $\boldsymbol{x}\in\ker(T_1^2-2\mathrm{Re}(\omega)T_1+|\omega|^2)$, one can see that $U\boldsymbol{x}\in\ker(T_2^2-2\mathrm{Re}(\omega)T_2+|\omega|^2)$ since
    $$
     (T_2^2-2\mathrm{Re}(\omega)T_2+|\omega|^2)(U\boldsymbol{x})
    =  U(T_1^2-2\mathrm{Re}(\omega)T_1+|\omega|^2)\boldsymbol{x}
    =  0.
    $$
    Then, we have $Ut_1=t_2$. Therefore, $t_1$ and $t_2$ are congruent and consequently, $E_{T_1}$ and $E_{T_2}$ are equivalent as Hermitian right holomorphic quaternionic vector bundles.

    On the other hand, suppose that $E_{T_1}$ and $E_{T_2}$ are equivalent as Hermitian right holomorphic quaternionic vector bundles. Then, by Theorem \ref{RT}, the right holomorphic curves $t_1$ and $t_2$ are congruent. Let $V$ be a unitary operator on $\mathcal{H}_q$ such that $Vt_1=t_2$. Let
    $\{\gamma_j\}_{j=1}^n$ is a right holomorphic frame of $E_{T_1}$ over a domain $\Delta\subseteq\Omega_{red}$.
    By Lemma \ref{orthonormal basis P}, $\gamma_j(\omega)\in \ker(T_1-\boldsymbol{I}\omega)$ and $V(\gamma_j(\omega))\in \ker(T_2-\boldsymbol{I}\omega)$ for any $\omega\in\Delta$.
    Then, for every $j=1, \cdots, n$,
       $$
       VT_1(\gamma_{j}(\omega))=V(\gamma_{j}(\omega)\cdot \omega)=V(\gamma_{j}(\omega))\cdot \omega=T_2V(\gamma_{j}(\omega)).
       $$
    Since
    \[
    \bigvee\limits_{\mathbb{H}}\{ \gamma_j(\omega); \ j=1, \cdots, n \ \text{and} \ \omega\in\Delta\}=\mathcal{H}_q,
    \]
    we have $VT_1=T_2V$. Therefore, $T_1$ and $T_2$ are quaternion unitarily equivalent. The proof is finished.
    \end{proof}

\section{Operators in $B_1^{s}(\Omega)$}

In this section, we focus on the operators in $B_1^{s}(\Omega)$. Let $T\in B_1^{s}(\Omega_q)$. Given any $\omega_0\in\Omega_{red}$. Let $\gamma$ be a right holomorphic frame (a right holomorphic cross-section that is non-trivial everywhere) of $E_T$ over a neighborhood $\Delta$ of $\omega_0$. We could obtain an orthonormal basis $\{e_i(\omega_0)\}_{i=0}^{\infty}$ from $\{\gamma^{(i)}(\omega_0)\}_{i=0}^{\infty}$ by Gram-Schmidt orthogonalization, that is,
\begin{equation}\label{orthonormal basis }
	e_i(\omega_0)=\begin{cases}
		\frac{\gamma(\omega_0)}{\Vert\gamma(\omega_0)\Vert}\ & i=0 \\
		\frac{\gamma^{(i)}(\omega_0)-\sum_{k=1}^{i-1}e_k(\omega_0){\langle\gamma^{(i)}(\omega_0),\ e_k(\omega_0)\rangle_{\mathbb{H}}}}
		{\|\gamma^{(i)}(\omega_0)-\sum_{k=1}^{i-1}e_k(\omega_0){\langle\gamma^{(i)}(\omega_0),\ e_k(\omega_0)\rangle_{\mathbb{H}}}\|}\ & i\ge1,\\
	\end{cases}
\end{equation}
For any $i\in\mathbb{N}$, let
\[
\mathcal{H}_i(\omega_0)\triangleq{\bigvee}_{\mathbb{H}}\{e_k(\omega_0);\ k=0,\cdots,i\}={\bigvee}_{\mathbb{H}}\{\gamma^{(k)}(\omega_0); \ k=0,\cdots,i\}.
\]
and $P_i:\ \mathcal{H}_{q}\mapsto\mathcal{H}_{i}(\omega_0)$ be the orthogonal projection. Then
\[
e_i(\omega_0)=\frac{\gamma^{(i)}(\omega_0)-P_{i-1}(\gamma^{(i)}(\omega_0))}
{\|\gamma^{(i)}(\omega_0)-P_{i-1}(\gamma^{(i)}(\omega_0))\|}.
\]
By Lemma \ref{orthonormal basis P}, we have $Te_0=e_0\omega_0$ and for $i\in\mathbb{N}$,
\[
\begin{aligned}
Te_{i+1}(\omega_0)=& \ T\left(\frac{\gamma^{(i+1)}(\omega_0)-P_{i}(\gamma^{(i+1)}(\omega_0))}
{\|\gamma^{(i+1)}(\omega_0)-P_{i}(\gamma^{(i+1)}(\omega_0))\|}\right) \\
=& \ \frac{T\gamma^{(i+1)}(\omega_0)-TP_{i}(\gamma^{(i)}(\omega_0))}
{\|\gamma^{(i+1)}(\omega_0)-P_{i}(\gamma^{(i+1)}(\omega_0))\|} \\
=& \ \frac{\gamma^{(i+1)}(\omega_0)\cdot\omega_0+k\gamma^{(i+1)}(\omega_0)-P_{i}TP_i(\gamma^{(i+1)}(\omega_0))}
{\|\gamma^{(i+1)}(\omega_0)-P_{i}(\gamma^{(i+1)}(\omega_0))\|} \\
=& \ \frac{\gamma^{(i+1)}(\omega_0)\cdot\omega_0-P_{i}(\gamma^{(i+1)}(\omega_0))\cdot\omega_0}
{\|\gamma^{(i+1)}(\omega_0)-P_{i}(\gamma^{(i+1)}(\omega_0))\|}+ \\
&\ \frac{P_{i}(\gamma^{(i+1)}(\omega_0))\cdot\omega_0+k\gamma^{(i)}(\omega_0)-P_{i}TP_i(\gamma^{(i+1)}(\omega_0))}
{\|\gamma^{(i+1)}(\omega_0)-P_{i}(\gamma^{(i+1)}(\omega_0))\|} \\
=&\ e_{i+1}(\omega_0)\cdot\omega_0+P_i\left(\frac{\gamma^{(i+1)}(\omega_0)\cdot\omega_0+k\gamma^{(i)}(\omega_0)-TP_i(\gamma^{(i+1)}(\omega_0))}{\|\gamma^{(i+1)}(\omega_0)-P_{i}(\gamma^{(i+1)}(\omega_0))\|}\right)\\
=&\ e_{i+1}(\omega_0)\cdot\omega_0+P_i(Te_{i+1}(\omega_0)).
\end{aligned}
\]
Then, under the orthonormal basis $\{e_i(\omega_0)\}_{i=0}^{\infty}$, $T$ has the matrix representation $N(\omega_0)$ as follows,
\[
   N(\omega_0)= \begin{pmatrix}
	\omega & \langle Te_1(\omega_0),\ e_0(\omega_0)\rangle_{\mathbb{H}} & \langle Te_2(\omega_0),\ e_0(\omega_0)\rangle_{\mathbb{H}} & \langle Te_3(\omega_0),\ e_0(\omega_0)\rangle_{\mathbb{H}} & \cdots \\
	0 & \omega & \langle Te_2(\omega_0),\ e_1(\omega_0)\rangle_{\mathbb{H}} & \langle Te_2(\omega_0),\ e_1(\omega_0)\rangle_{\mathbb{H}} & \cdots \\
	0 & 0 & \omega & \langle Te_3(\omega_0),\ e_2(\omega_0)\rangle_{\mathbb{H}} & \cdots \\
	0 &	0 & 0 & \omega & \cdots \\
	\vdots & \vdots & \vdots & \vdots & \ddots
\end{pmatrix}.
\]
We call $N(\omega_0)$ the canonical matrix representation of $T$ at $\omega_0$. Next, we will study the canonical matrix representation.

    \begin{lemma}\label{FR}
 Let $T\in B_1^{s}(\Omega_q)$. Suppose that $\gamma$ and $\widetilde{\gamma}$ are two right holomorphic  frames  of $E_T$ over a domain $\Delta$. Then, there exists a complex-valued holomorphic function $f$ on $\Delta$ such that $\widetilde{\gamma}(\omega)=\gamma(\omega)f(\omega)$ for any $\omega\in\Delta$.
    \end{lemma}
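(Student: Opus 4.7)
The plan is to construct $f(\omega)$ pointwise first, using the fact that $\ker(T-\boldsymbol{I}\omega)$ is one-dimensional over $\mathbb{C}_{\boldsymbol{i}}$, and then to show the resulting pointwise function is holomorphic by verifying $\partial f/\partial\overline{z}\equiv 0$ through a direct product-rule computation.

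For the pointwise step, fix $\omega\in\Delta$. By Lemma~\ref{orthonormal basis P} applied to both frames, $\gamma(\omega),\widetilde{\gamma}(\omega)\in\ker(T-\boldsymbol{I}\omega)$, and this kernel sits inside the one-dimensional quaternionic right space $\ker(T^2-2\mathrm{Re}(\omega)T+|\omega|^2)=\gamma(\omega)\mathbb{H}$. A direct calculation shows $T(\gamma(\omega)q)=\gamma(\omega)q\cdot\omega$ if and only if $\omega q=q\omega$, i.e.\ $q\in\mathbb{C}_{\boldsymbol{i}}$ (using $\omega\in\Omega_{red}\subset\mathbb{C}_{\boldsymbol{i}}$). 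Hence $\ker(T-\boldsymbol{I}\omega)=\gamma(\omega)\mathbb{C}_{\boldsymbol{i}}$, and there is a unique $f(\omega)\in\mathbb{C}_{\boldsymbol{i}}$ with $\widetilde{\gamma}(\omega)=\gamma(\omega)f(\omega)$.

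To see $f$ is real-smooth, fix $\omega_0\in\Delta$ and consider the bounded right $\mathbb{H}$-linear functional $\phi(x)=\langle x,\gamma(\omega_0)\rangle_{\mathbb{H}}$, for which $\phi(\gamma(\omega_0))=\|\gamma(\omega_0)\|^2\neq 0$. Applying $\phi$ to $\widetilde{\gamma}=\gamma f$ gives $\phi(\widetilde{\gamma}(\omega))=\phi(\gamma(\omega))\cdot f(\omega)$, where $\phi\circ\gamma$ and $\phi\circ\widetilde{\gamma}$ are $\mathbb{H}$-valued right holomorphic, hence real-analytic. On a neighbourhood $U$ of $\omega_0$ we have $\phi(\gamma(\omega))\neq 0$, and since quaternionic inversion is smooth on $\mathbb{H}\setminus\{0\}$, the identity $f=\phi(\gamma)^{-1}\phi(\widetilde{\gamma})$ shows $f$ is smooth on $U$, and hence on all of $\Delta$ by varying $\omega_0$.

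Finally, differentiating $\widetilde{\gamma}=\gamma f$ in the real coordinates $x,y$ of $\omega=x+\boldsymbol{i}y$ and assembling $\partial/\partial\overline{z}=\tfrac{1}{2}(\partial/\partial x+\partial/\partial y\cdot\boldsymbol{i})$, the commutation $f\cdot\boldsymbol{i}=\boldsymbol{i}\cdot f$---valid precisely because $f(\omega)\in\mathbb{C}_{\boldsymbol{i}}$---allows the cross terms to be regrouped as
\[
\frac{\partial\widetilde{\gamma}}{\partial\overline{z}}=\frac{\partial\gamma}{\partial\overline{z}}\cdot f+\gamma\cdot\frac{\partial f}{\partial\overline{z}}.
\]
Right holomorphy of $\gamma$ and $\widetilde{\gamma}$ annihilates the left side and the first term on the right, leaving $\gamma(\omega)\cdot\partial f/\partial\overline{z}(\omega)=0$. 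Since $\gamma(\omega)\neq 0$ and $\partial f/\partial\overline{z}$ is a scalar in $\mathbb{H}$, we conclude $\partial f/\partial\overline{z}\equiv 0$; as $f$ is $\mathbb{C}_{\boldsymbol{i}}$-valued, this is just the classical complex Cauchy--Riemann equation, so $f$ is a complex-valued holomorphic function on $\Delta$. The main obstacle is precisely this product-rule step: the paper's own discussion before Theorem~\ref{RT} shows that for generic right holomorphic quaternionic factors the naive product rule for $\partial/\partial\overline{z}$ fails, and the clean identity above works only because the pointwise analysis in the first step forces $f\in\mathbb{C}_{\boldsymbol{i}}$ and thereby guarantees commutation with $\boldsymbol{i}$.
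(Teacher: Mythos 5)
Your proof is correct and follows essentially the same route as the paper's: one-dimensionality of $\ker(T-\boldsymbol{I}\omega)$ over $\mathbb{C}_{\boldsymbol{i}}$ yields $\widetilde{\gamma}=\gamma f$ pointwise, and the identity $0=\partial\widetilde{\gamma}/\partial\overline{z}=(\partial\gamma/\partial\overline{z})\cdot f+\gamma\cdot\partial f/\partial\overline{z}=\gamma\cdot\partial f/\partial\overline{z}$ forces $f$ to be holomorphic. In fact you supply details the paper leaves implicit --- that $f(\omega)$ must lie in $\mathbb{C}_{\boldsymbol{i}}$ rather than merely in $\mathbb{H}$, that $f$ is smooth before one differentiates it, and that the product rule for $\partial/\partial\overline{z}$ is legitimate here precisely because $f$ commutes with $\boldsymbol{i}$ --- so your write-up is, if anything, more complete than the paper's.
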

    \begin{proof}
   Since $\gamma(\omega),\widetilde{\gamma}(\omega)\in\ker(T-\boldsymbol{I}\omega)$ and $\textrm{dim}_{\mathbb{C}}\ker(T-\boldsymbol{I}\omega)=1$  for any $\omega\in\Delta$, there exists a complex-valued holomorphic function $f$ on $\Delta$ such that $\widetilde{\gamma}(\omega)=\gamma(\omega)f(\omega)$. Notice that
   \[
   0=\frac{\partial \widetilde{\gamma}}{\partial\overline{\omega}}=\frac{\partial \gamma}{\partial\overline{\omega}}\cdot f+ \gamma\cdot\frac{\partial f}{\partial\overline{\omega}}=\gamma\cdot\frac{\partial f}{\partial\overline{\omega}}.
   \]
    Then. $\frac{\partial f}{\partial\overline{\omega}}=0$ for any $\omega\in\Delta$, and hence $f$ is a complex-valued holomorphic function on $\Delta$ such that $\widetilde{\gamma}(\omega)=\gamma(\omega)f(\omega)$ for any $\omega\in\Delta$.
    \end{proof}

\begin{lemma}\label{Canonical}
Let $T\in B_1^{s}(\Omega_q)$. Given any $\omega_0\in\Omega_{red}$. Suppose that $\gamma$ and $\widetilde{\gamma}$ are two right holomorphic  frames  of $E_T$ over a neighborhood $\Delta$ of $\omega_0$. Let $\{e_i(\omega_0)\}_{i=0}^{\infty}$ and $\{\widetilde{e_i}(\omega_0)\}_{i=0}^{\infty}$ be the orthonormal basis induced by $\{\gamma^{(i)}(\omega_0)\}_{i=0}^{\infty}$ and $\{\widetilde{\gamma}^{(i)}(\omega_0)\}_{i=0}^{\infty}$, respectively. Furthermore, write $N(\omega_0)$ and  $\widetilde{N}(\omega_0)$ the matrix representations of $T$ under the orthonormal basis $\{e_i(\omega_0)\}_{i=0}^{\infty}$ and $\{\widetilde{\gamma}^{(i)}(\omega_0)\}_{i=0}^{\infty}$, respectively. Then, there exists $\theta_0\in[0,\ 2\pi)$ such that for any $i\in\mathbb{N}$,
\[
\widetilde{e_i}(\omega_0)=e_i(\omega_0)\cdot\mathrm{e}^{\boldsymbol{i}\theta_0}.
\]
Moreover,
\[
\widetilde{N}(\omega_0)=\mathrm{e}^{-\boldsymbol{i}\theta_0}\cdot N(\omega_0) \cdot\mathrm{e}^{\boldsymbol{i}\theta_0}.
\]
\end{lemma}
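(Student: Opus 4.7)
My plan is to transport the Gram--Schmidt procedure through the complex-valued transition function supplied by Lemma \ref{FR}. Applying that lemma, I obtain a complex-valued holomorphic function $f:\Delta\to\mathbb{C}$ with $\widetilde{\gamma}(\omega)=\gamma(\omega)f(\omega)$ on $\Delta$. Since $\widetilde{\gamma}(\omega_0)\neq 0$, necessarily $f(\omega_0)\neq 0$, so I define $\theta_0\in[0,2\pi)$ by $f(\omega_0)=|f(\omega_0)|\mathrm{e}^{\boldsymbol{i}\theta_0}$. The entire task is to show that this single phase suffices to convert $\{e_i(\omega_0)\}$ into $\{\widetilde{e_i}(\omega_0)\}$.

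The key technical input is the Leibniz rule: because $f$ takes values in the commutative subfield $\mathbb{C}_{\boldsymbol{i}}$ where the increment $\Delta z$ also lives, the product rule $(\gamma f)'=\gamma' f+\gamma f'$ goes through without interference from the noncommutativity of $\mathbb{H}$, and iteration yields
\[
\widetilde{\gamma}^{(i)}(\omega_0)=\sum_{k=0}^{i}\binom{i}{k}\gamma^{(k)}(\omega_0)\,f^{(i-k)}(\omega_0)=\gamma^{(i)}(\omega_0)f(\omega_0)+v_{i-1},
\]
with $v_{i-1}\in\mathcal{H}_{i-1}(\omega_0)$. An inductive reading of this triangular relation (whose diagonal entry $f(\omega_0)$ is nonzero) shows that $\widetilde{\gamma}$ and $\gamma$ produce the same nested subspaces $\mathcal{H}_i(\omega_0)$, so the projections $P_{i-1}$ coincide for both frames. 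Since $P_{i-1}$ is right $\mathbb{H}$-linear and $f(\omega_0)$ is a scalar, the $v_{i-1}$ contribution cancels and
\[
\widetilde{\gamma}^{(i)}(\omega_0)-P_{i-1}\widetilde{\gamma}^{(i)}(\omega_0)=\bigl(\gamma^{(i)}(\omega_0)-P_{i-1}\gamma^{(i)}(\omega_0)\bigr)f(\omega_0).
\]
Normalizing and using $\|\xi c\|=|c|\,\|\xi\|$ delivers $\widetilde{e_i}(\omega_0)=e_i(\omega_0)\cdot f(\omega_0)/|f(\omega_0)|=e_i(\omega_0)\mathrm{e}^{\boldsymbol{i}\theta_0}$.

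The matrix identity is then a one-line computation using right linearity of $T$ together with the sesquilinearity relations $\langle xc,y\rangle_{\mathbb{H}}=\langle x,y\rangle_{\mathbb{H}}c$ and $\langle x,yc\rangle_{\mathbb{H}}=\overline{c}\langle x,y\rangle_{\mathbb{H}}$: with $c=\mathrm{e}^{\boldsymbol{i}\theta_0}$,
\[
\widetilde{N}_{ij}(\omega_0)=\langle T(e_j(\omega_0)c),\,e_i(\omega_0)c\rangle_{\mathbb{H}}=\overline{c}\,\langle Te_j(\omega_0),e_i(\omega_0)\rangle_{\mathbb{H}}\,c=\mathrm{e}^{-\boldsymbol{i}\theta_0}\,N_{ij}(\omega_0)\,\mathrm{e}^{\boldsymbol{i}\theta_0},
\]
which is the claimed entrywise conjugation. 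The main obstacle I expect is tracking the interplay between the noncommutativity of $\mathbb{H}$, the right linearity of $T$ and $P_{i-1}$, and the sesquilinearity of $\langle\cdot,\cdot\rangle_{\mathbb{H}}$: everything works because $f(\omega_0)\in\mathbb{C}_{\boldsymbol{i}}$ is a scalar with respect to both the derivative and the projection, whereas a genuinely quaternion-valued transition function would break both the Leibniz step and the commutation with $P_{i-1}$. This is why Lemma \ref{FR} --- which forces $f$ to be $\mathbb{C}$-valued, not $\mathbb{H}$-valued --- is the indispensable starting point.
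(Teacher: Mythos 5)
Your proposal is correct and follows essentially the same route as the paper's proof: invoke Lemma \ref{FR} to get the complex-valued transition function $f$, use the Leibniz rule and the filtration $\mathcal{H}_i(\omega_0)$ to see that Gram--Schmidt transports the frame change into the single scalar $f(\omega_0)/|f(\omega_0)|=\mathrm{e}^{\boldsymbol{i}\theta_0}$, and then conjugate the matrix entries via the sesquilinearity of $\langle\cdot,\cdot\rangle_{\mathbb{H}}$. Your explicit remark that the equality of the nested subspaces (hence of the projections $P_i$) follows from the triangularity of the Leibniz expansion with nonzero diagonal entry $f(\omega_0)$ is a point the paper leaves implicit, but the argument is the same.
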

    \begin{proof}
    By	Lemma \ref{FR}, there exists a complex-valued holomorphic function $f$ on $\Delta$ such that $\widetilde{\gamma}(\omega)=\gamma(\omega)f(\omega)$ for any $\omega\in\Delta$. Since for any $i\in\mathbb{N}$,
    $$
    (\gamma(\omega)f(\omega))^{(i+1)}=\gamma^{(i+1)}(\omega)f(\omega)+\sum_{k=0}^{i}
    \begin{pmatrix}
    i+1 \\
    k
    \end{pmatrix}\gamma^{(k)}(\omega)f^{(i+1-k)}(\omega),
$$
and
\[
\mathcal{H}_i(\omega_0)\triangleq{\bigvee}_{\mathbb{H}}\{e_k(\omega_0);\ k=0,\cdots,i\}={\bigvee}_{\mathbb{H}}\{\gamma^{(k)}(\omega_0); \ k=0,\cdots,i\}.
\]
    We have $\widetilde{e_{0}}(\omega_0)=e_{0}\cdot\frac{f(\omega_0)}{\|f(\omega_0)\|}$ and
    $$
    \begin{aligned}
    \widetilde{e_{i+1}}(\omega_0) & =\frac{(\gamma f)^{(i+1)}(\omega_0)-P_i((\gamma f)^{(i+1)}(\omega_0))}
    {\|(\gamma f)^{(i+1)}(\omega_0)-P_i((\gamma f)^{(i+1)}(\omega_0))\|} \\
    & =\frac{\gamma^{(i+1)}(\omega_0)f(\omega_0)-P_i(\gamma^{(i+1)}(\omega_0)f(\omega_0))}{\|\gamma^{(i+1)}(\omega_0)f(\omega_0)-P_i(\gamma^{(i+1)}(\omega_0)f(\omega_0))\|} \\
    & =\frac{(\gamma^{(i+1)}(\omega_0)-P_i(\gamma^{(i+1)}(\omega_0)))f(\omega_0)}{\|(\gamma^{(i+1)}(\omega_0)-P_i(\gamma^{(i+1)}(\omega_0)))f(\omega_0)]|} \\
    & =e_{i+1}\cdot\frac{f(\omega_0)}{\|f(\omega_0)\|}.
    \end{aligned}
    $$
    Let $\theta_0\in[0,\ 2\pi)$ such that $\mathrm{e}^{\boldsymbol{i}\theta_0}=\frac{f(\omega_0)}{\|f(\omega_0)\|}$. Then, $\widetilde{e_i}(\omega_0)=e_i(\omega_0)\cdot\mathrm{e}^{\boldsymbol{i}\theta_0}$ for any $i\in\mathbb{N}$.

    Furthermore, one can see that for any $i,j\in\mathbb{N}$ with $i\geq j$,
    \[
    \begin{aligned}
    \langle T\widetilde{e_i}(\omega_0),\ \widetilde{e_j}(\omega_0)\rangle_{\mathbb{H}}=&\langle Te_i(\omega_0)\cdot\mathrm{e}^{\boldsymbol{i}\theta_0},\ e_j(\omega_0)\cdot\mathrm{e}^{\boldsymbol{i}\theta_0}\rangle_{\mathbb{H}} \\
    =&\mathrm{e}^{-\boldsymbol{i}\theta_0}\cdot \langle Te_i(\omega_0),\ e_j(\omega_0)\rangle_{\mathbb{H}} \cdot\mathrm{e}^{\boldsymbol{i}\theta_0}.
    \end{aligned}
\]
Therefore, we obtain that
\[
\widetilde{N}(\omega_0)=\mathrm{e}^{-\boldsymbol{i}\theta_0}\cdot N(\omega_0) \cdot\mathrm{e}^{\boldsymbol{i}\theta_0}.
\]
    \end{proof}
\begin{remark}
	For $T\in B_1^{s}(\Omega_q)$ and $\omega_0\in\Omega_{red}$,  Lemma \ref{Canonical} shows that the matrix representation $N(\omega_0)$ is unique in the sense of modulus $\mathrm{ad}_{\mathrm{e}^{\boldsymbol{i}\theta}}$ actions, where $\mathrm{ad}_{\mathrm{e}^{\boldsymbol{i}\theta}}(N(\omega_0)):=\mathrm{e}^{-\boldsymbol{i}\theta}\cdot N(\omega_0) \cdot\mathrm{e}^{\boldsymbol{i}\theta}$ for $\theta\in\mathbb{R}$. So, it is reasonable to name $N(\omega_0)$ the canonical matrix representation of $T$ at $\omega_0$.
\end{remark}

    \begin{theorem}\label{CbyC}
Let $T_1,\ T_2\in B_1^{s}(\Omega_q)$.  Given any $\omega_0\in\Omega_{red}$. Let $N_1(\omega_0)$ and $N_2(\omega_0)$ be the canonical matrix representations of $T_1$ and $T_2$ at $\omega_0$, respectively.
Then, $T_1$ and $T_2$ are quaternion unitarily equivalent if and only if
there exists $\theta\in[0,\ 2\pi)$ such that $N_2(\omega_0)=\mathrm{e}^{-\boldsymbol{i}\theta_0}\cdot N_1(\omega_0) \cdot\mathrm{e}^{\boldsymbol{i}\theta_0}$.
\end{theorem}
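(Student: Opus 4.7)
The plan is to exploit the near-uniqueness, modulo the conjugation action $\mathrm{ad}_{\mathrm{e}^{\boldsymbol{i}\theta}}$, of the canonical matrix representation at $\omega_0$ established in Lemma \ref{Canonical}, and to translate quaternion unitary equivalence of $T_1$ and $T_2$ into equality of their canonical matrices up to this action.

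For the forward direction, I would start from a quaternion unitary $U$ with $UT_1 = T_2 U$ and fix a right holomorphic frame $\gamma$ of $E_{T_1}$ on a neighbourhood of $\omega_0$ used to build $N_1(\omega_0)$. The key observation is that $\widetilde{\gamma}:=U\gamma$ is a right holomorphic frame of $E_{T_2}$: it is right-holomorphic because $U$ is bounded and right linear, and Lemma \ref{orthonormal basis P} together with $UT_1=T_2U$ yields $T_2\widetilde{\gamma}(\omega)=\widetilde{\gamma}(\omega)\cdot\omega$, so $\widetilde{\gamma}(\omega)\in\ker(T_2-\boldsymbol{I}\omega)$. Since $U$ commutes with differentiation, $\widetilde{\gamma}^{(k)}(\omega_0)=U\gamma^{(k)}(\omega_0)$, and since $U$ is an isometry, Gram--Schmidt on $\{\widetilde{\gamma}^{(k)}(\omega_0)\}$ produces the orthonormal basis $\{Ue_i(\omega_0)\}$, where $\{e_i(\omega_0)\}$ underlies $N_1(\omega_0)$. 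The matrix of $T_2$ in this basis has entries
\[
\langle T_2Ue_i(\omega_0),\,Ue_j(\omega_0)\rangle_{\mathbb{H}}=\langle UT_1e_i(\omega_0),\,Ue_j(\omega_0)\rangle_{\mathbb{H}}=\langle T_1e_i(\omega_0),\,e_j(\omega_0)\rangle_{\mathbb{H}},
\]
so this canonical matrix representation of $T_2$ at $\omega_0$ equals $N_1(\omega_0)$. Lemma \ref{Canonical} then produces $\theta_0\in[0,2\pi)$ with $N_2(\omega_0)=\mathrm{e}^{-\boldsymbol{i}\theta_0}N_1(\omega_0)\mathrm{e}^{\boldsymbol{i}\theta_0}$.

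For the converse, assume $N_2(\omega_0)=\mathrm{e}^{-\boldsymbol{i}\theta_0}N_1(\omega_0)\mathrm{e}^{\boldsymbol{i}\theta_0}$, and let $\{e_i(\omega_0)\}$ and $\{\widetilde{e_i}(\omega_0)\}$ be the underlying orthonormal bases of $\mathcal{H}_q$, totality following from Remark \ref{orthonormal basis PC} and Lemma \ref{Basis}. I would define a right linear operator $U$ on $\mathcal{H}_q$ by $Ue_i(\omega_0)=\widetilde{e_i}(\omega_0)\cdot\mathrm{e}^{-\boldsymbol{i}\theta_0}$; since $|\mathrm{e}^{-\boldsymbol{i}\theta_0}|=1$, the image is again an orthonormal basis, so $U$ is quaternion unitary. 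Checking $UT_1=T_2U$ on the basis reduces to comparing
\begin{align*}
UT_1e_i(\omega_0)&=\sum_k\widetilde{e_k}(\omega_0)\cdot\mathrm{e}^{-\boldsymbol{i}\theta_0}\cdot(N_1(\omega_0))_{ki},\\
T_2Ue_i(\omega_0)&=T_2\bigl(\widetilde{e_i}(\omega_0)\cdot\mathrm{e}^{-\boldsymbol{i}\theta_0}\bigr)=\sum_k\widetilde{e_k}(\omega_0)\cdot(N_2(\omega_0))_{ki}\cdot\mathrm{e}^{-\boldsymbol{i}\theta_0},
\end{align*}
and equality coefficient-by-coefficient is exactly the hypothesis $(N_2)_{ki}\mathrm{e}^{-\boldsymbol{i}\theta_0}=\mathrm{e}^{-\boldsymbol{i}\theta_0}(N_1)_{ki}$.

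The only real hurdle is careful bookkeeping of quaternionic scalars under right linearity: one must keep track of whether $\mathrm{e}^{-\boldsymbol{i}\theta_0}$ appears to the left or to the right of a matrix entry $(N_j)_{ki}$, and verify that the conjugation $\mathrm{ad}_{\mathrm{e}^{\boldsymbol{i}\theta_0}}$ in the hypothesis is precisely what absorbs the discrepancy between these two positions when comparing $UT_1$ with $T_2U$. Beyond this, the result is essentially immediate from Lemma \ref{Canonical} and the construction of $U$, with no further geometric input required.
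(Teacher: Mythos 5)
Your proposal is correct and follows essentially the same route as the paper: the nontrivial direction transports a frame $\gamma$ of $E_{T_1}$ by $U$ to a frame of $E_{T_2}$, observes that Gram--Schmidt commutes with the isometry $U$ so the resulting canonical matrix of $T_2$ equals $N_1(\omega_0)$, and then invokes Lemma \ref{Canonical}; the converse builds $U$ from the two orthonormal bases exactly as the paper takes for granted. Your explicit bookkeeping of the left/right placement of $\mathrm{e}^{-\boldsymbol{i}\theta_0}$ in the converse is a welcome elaboration of a step the paper dismisses as obvious, but it is not a different argument.
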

    \begin{proof}
 Since $N_1(\omega_0)$ and $N_2(\omega_0)$ are the canonical matrix representations of $T_1$ and $T_2$ at $\omega_0$ respectively, it is obviously that $N_2(\omega_0)=\mathrm{e}^{-\boldsymbol{i}\theta_0}\cdot N_1(\omega_0) \cdot\mathrm{e}^{\boldsymbol{i}\theta_0}$ implies $T_1$ and $T_2$ are quaternion unitarily equivalent.

 Now, suppose that there exists a quaternion unitary operator $U$ such that $UT_1=T_2U$. Let $\gamma_1$ be a right holomorphic fame of $E_{T_1}$ and $\{e_i^1(\omega_0)\}_{i=0}^{\infty}$ be the orthonormal basis induced by $\{\gamma_1^{(i)}(\omega_0)\}_{i=0}^{\infty}$. Then, by Theorem \ref{MT}, $\gamma_2:=U\gamma_1$ is a right holomorphic fame of $E_{T_2}$. Let $\{e_i^2(\omega_0)\}_{i=0}^{\infty}$ be the orthonormal basis induced by $\{\gamma_2^{(i)}(\omega_0)\}_{i=0}^{\infty}$. One can see that $Ue_i^1(\omega_0)=e_i^2(\omega_0)$ for every $i\in\mathbb{N}$. Therefore, by Lemma \ref{Canonical}, there exists $\theta\in[0,\ 2\pi)$ such that $N_2(\omega_0)=\mathrm{e}^{-\boldsymbol{i}\theta_0}\cdot N_1(\omega_0) \cdot\mathrm{e}^{\boldsymbol{i}\theta_0}$.
  \end{proof}

In the classical Cowen-Douglas (complex) operator theory, curvature is a complete invariant for the unitary classification of $B_1(\Omega)$ (see \cite{CD}). Review that if $T\in B_1(\Omega)$ is a complex linear operator and $\gamma$ is a non-vanishing holomorphic cross-section of $E_{T}$, then the curvature $K_T$ of the Hermitian holomorphic complex vector bundle $E_T$ is the real analytic function defined by
$$
\mathcal{K}_T(\omega)=-\frac{\partial^2 \log \|\gamma(\omega)\|^2}{\partial\omega\partial\overline{\omega}}=\frac{|\langle\gamma'(\omega),\ \gamma(\omega)\rangle|^2-\|\gamma'(\omega)\|^2\|\gamma(\omega)\|^2}{\|\gamma(\omega)\|^4},
$$
for any $\omega\in\Omega$.

Similarly, we also could define the curvature of a $1$-dimensional Hermitian right holomorphic quaternionic vector bundle. Let $T\in B_1(\Omega_q)$ and $\gamma$ be any non-vanishing right holomorphic cross-section of $E_{T}$. The curvature $\mathcal{K}_T$ of $E_T$ is defined by
$$
\mathcal{K}_T(\omega)=-\frac{\partial^2 \log \|\gamma(\omega)\|^2}{\partial\omega\partial\overline{\omega}}.
$$
Notice that the curvature $\mathcal{K}_T$ of $E_T$ equals to the curvature of $(E_T)_{\mathbb{C}}$ (the complex representation bundle of $E_T$), i.e.,
$$
\mathcal{K}_T(\omega)=-\frac{\partial^2 \log \|(\gamma(\omega))_{\mathbb{C}}\|^2}{\partial\omega\partial\overline{\omega}}=\frac{|\langle(\gamma'(\omega))_{\mathbb{C}},\ (\gamma(\omega))_{\mathbb{C}}\rangle|^2-\|(\gamma'(\omega))_{\mathbb{C}}\|^2\|(\gamma(\omega))_{\mathbb{C}}\|^2}{\|(\gamma(\omega))_{\mathbb{C}}\|^4}.
$$
However, in general,
$$
\mathcal{K}_T(\omega)\neq\frac{|\langle\gamma'(\omega),\ \gamma(\omega)\rangle_{\mathbb{H}}|^2-\|\gamma'(\omega)\|^2\|\gamma(\omega)\|^2}{\|\gamma(\omega)\|^4}.
$$
\begin{example}\label{CNDU}
Fix an orthonormal basis $\{\mathbf{e}_n\}_{n=0}^{\infty}$ in $\mathcal{H}_q$. Let $T$ and $\widetilde{T}$ be two linear operators on $\mathcal{H}_q$ with the following matrix representations under the orthonormal basis $\{\mathbf{e}_n\}_{n=0}^{\infty}$,
\[
T=
\begin{pmatrix}
\boldsymbol{i} & 1+2\boldsymbol{ji} & 1+\boldsymbol{j} & 0 & 0 & \cdots \\
0 & \boldsymbol{i} & 1 & 0 & 0 & \cdots \\
0 & 0 & \boldsymbol{i} & 1 & 0 & \cdots \\
0 & 0 & 0 & \boldsymbol{i} & 1  & \cdots \\
0 & 0 & 0 & 0 & \boldsymbol{i}  & \cdots \\
\vdots & \vdots & \vdots & \vdots & \vdots & \ddots \\
\end{pmatrix}
\]
and
\[
\widetilde{T}=\begin{pmatrix}
	\boldsymbol{i} & \frac{1}{2}-\frac{1}{2}\boldsymbol{j} & 1 & 0 & 0 & \cdots \\
	0 & \boldsymbol{ji} & 1+\boldsymbol{j} & 0 & 0 & \cdots \\
	0 & 0 & \boldsymbol{i} & 1 & 0 & \cdots \\
	0 & 0 & 0 & \boldsymbol{i} & 1  & \cdots \\
	0 & 0 & 0 & 0 & \boldsymbol{i}  & \cdots \\
	\vdots & \vdots & \vdots & \vdots & \vdots & \ddots \\
\end{pmatrix}.
\]
Let $\Omega_{red}=\{\omega\in\mathbb{C};\ |\omega-i|<1\}$. It is not difficult to see that $T,\widetilde{T}\in B_1^s(\Omega_{red})$. We will show that $T$ and $\widetilde{T}$ have the same curvature, but they are not quaternion unitarily equivalent.

   For any $\omega\in\Omega_{red}$, let
    \[
    \gamma(\omega)  =(1+(1+\boldsymbol{j})(\omega-\boldsymbol{i}),\ (\omega-\boldsymbol{i}),\ (\omega-\boldsymbol{i})^2,\
    (\omega-\boldsymbol{i})^3,\ \cdots)^t
    \]
    and
    \[
    \widetilde{\gamma}(\omega)=(1+\omega-\boldsymbol{i},\ (1+\boldsymbol{j})(\omega-\boldsymbol{i}),\ (\omega-\boldsymbol{i})^2,\
    (\omega-\boldsymbol{i})^3,\ \cdots)^t.
   \]
    Then, $\gamma$ and $\widetilde{\gamma}$ are  right holomorphic frames of $E_T$ and
    $E_{\widetilde{T}}$, respectively.  For any $\omega\in\Omega_{red}$, one can see that
    \[
    \|\gamma(\omega)\|^2=|1+{\omega}-\boldsymbol{i}|^2+2|\omega-\boldsymbol{i}|^2
    +\sum_{n=2}^{\infty}|\omega-\boldsymbol{i}|^{2n}=\|\widetilde{\gamma}(\omega)\|^2.
    \]
    Consequently, $T$ and $\widetilde{T}$ have the same curvature since
       \[
       \mathcal{K}_T(\omega)=-\frac{\partial^2 \log \|\gamma(\omega)\|^2}{\partial{\omega}\partial{\overline{\omega}}}=-\frac{\partial^2 \log \|\widetilde{\gamma}(\omega)\|^2}{\partial{\omega}\partial{\overline{\omega}}}=\mathcal{K}_{\widetilde{T}}(\omega).
       \]

    Now, consider the canonical matrix representations of $T$ and $\widetilde{T}$ at $\boldsymbol{i}$, denoted by $N_T(\boldsymbol{i})$ and $N_{\widetilde{T}}(\boldsymbol{i})$ respectively.
    Let $\{e_i(\boldsymbol{i})\}_{i=0}^{\infty}$ and $\{\widetilde{e_i}(\boldsymbol{i})\}_{i=0}^{\infty}$ be the orthonormal basis induced by $\{\gamma^{(i)}(\boldsymbol{i})\}_{i=0}^{\infty}$ and $\{\widetilde{\gamma}^{(i)}(\boldsymbol{i})\}_{i=0}^{\infty}$, respectively.
    Then,
    $$\begin{aligned}
    \gamma(\boldsymbol{i}) & =(1,\ 0,\ 0,\ 0,\ \cdots),\ \gamma'(\boldsymbol{i}) =(1+\boldsymbol{j},\ 1, \ 0,\ 0,\ \cdots), \\
    \widetilde{\gamma}(\boldsymbol{i}) & =(1,\ 0,\ 0,\ 0,\ \cdots),\ \widetilde{\gamma}'(\boldsymbol{i}) =(1,\ 1+\boldsymbol{j}, \ 0,\ 0,\ \cdots),
    \end{aligned}$$
    and furthermore,
     $$\begin{aligned}
    	e_0(\boldsymbol{i}) & =(1,\ 0,\ 0,\ 0,\ \cdots),\ e_1(\boldsymbol{i}) =(0,\ 1, \ 0,\ 0,\ \cdots), \\
    	\widetilde{e_0}(\boldsymbol{i}) & =(1,\ 0,\ 0,\ 0,\ \cdots),\ \widetilde{e_1}(\boldsymbol{i}) =(0,\ \frac{\sqrt{2}}{2}(1+\boldsymbol{j}), \ 0,\ 0,\ \cdots).
    \end{aligned}$$
    We have
     $$\begin{aligned}
     &\langle(Te_1(\boldsymbol{i}),\ e_0(\boldsymbol{i}) \rangle_{\mathbb{H}}=1+2\boldsymbol{ji}, \\
     & \langle(T\widetilde{e_1}(\boldsymbol{i}),\ \widetilde{e_0}(\boldsymbol{i}) \rangle_{\mathbb{H}}=\frac{\sqrt{2}}{2}.
    \end{aligned}$$
    Then, $\langle(Te_1(\boldsymbol{i}),\ e_0(\boldsymbol{i}) \rangle_{\mathbb{H}}$ and $\langle(T\widetilde{e_1}(\boldsymbol{i}),\ \widetilde{e_0}(\boldsymbol{i}) \rangle_{\mathbb{H}}$ are not axially symmetric and consequently, $\mathrm{e}^{-\boldsymbol{i}\theta}N_{T}(\boldsymbol{i})\mathrm{e}^{\boldsymbol{i}\theta}\ne N_{\widetilde{T}}(\boldsymbol{i})$ for every $\theta\in[0,\ 2\pi)$. Therefore, by Theorem \ref{CbyC},
    $T$ and $\widetilde{T}$ are not quaternion unitarily equivalent.
\end{example}

\begin{remark}
	The above specific example shows that the operators in $B_1^{s}(\Omega_{q})$ with the same curvature does not imply they are quaternion unitarily equivalent.
	Example {6.15} in \cite{HT} aims to do the same thing. However, the two operators in Example {6.15} in \cite{HT} are quaternion unitarily equivalent. So, Example {6.15} in \cite{HT} should be replaced by the above example.
\end{remark}

At the end of this article, let us show that two operators in $B_1^{s}(\Omega_{q})$ are quaternion unitarily equivalent if and only if their complex representation operators are unitarily equivalent.

\begin{theorem}
Let $T,\widetilde{T}\in B_1^{s}(\Omega_{q})$. Then,
$T$ and $\widetilde{T}$ are quaternion unitarily equivalent if and only if
$T_{\mathbb{C}}$ and $\widetilde{T}_{\mathbb{C}}$ are unitarily equivalent as bounded linear operators acting on $({\mathcal{H}_q})_\mathbb{C}$.
\end{theorem}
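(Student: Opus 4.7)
The forward direction is straightforward: if $U$ is a quaternion unitary on $\mathcal{H}_q$ with $UT = \widetilde{T} U$, then the complex representation $U_{\mathbb{C}}$ is a complex unitary on $(\mathcal{H}_q)_{\mathbb{C}}$; since $(\cdot)_{\mathbb{C}}$ is multiplicative, $U_{\mathbb{C}} T_{\mathbb{C}} = (UT)_{\mathbb{C}} = (\widetilde T U)_{\mathbb{C}} = \widetilde T_{\mathbb{C}} U_{\mathbb{C}}$.

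For the backward direction, the main tool will be the canonical quaternionic structure $J$ on $(\mathcal H_q)_{\mathbb C}$ defined by $J\boldsymbol{x}_{\mathbb C} := (\boldsymbol{x}\boldsymbol j)_{\mathbb C}$. One verifies that $J$ is an antilinear isometry with $J^2 = -I$, and that $JA_{\mathbb C} = A_{\mathbb C} J$ for every right linear quaternion operator $A$ on $\mathcal H_q$; in particular $JT_{\mathbb C} = T_{\mathbb C} J$ and $J\widetilde T_{\mathbb C} = \widetilde T_{\mathbb C} J$. The key algebraic observation is the converse: a bounded complex operator $W$ on $(\mathcal H_q)_{\mathbb C}$ equals $U_{\mathbb C}$ for some right linear quaternion $U$ if and only if $WJ = JW$, and $U$ is quaternion unitary iff $W$ is complex unitary. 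Thus it suffices to produce a $J$-commuting complex unitary intertwiner $W$ of $T_{\mathbb C}$ and $\widetilde T_{\mathbb C}$ out of the given intertwining complex unitary $V$, and set $U := W_{\mathbb H}$.

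The plan is to symmetrize $V$ with respect to $J$. Since $JVJ^{-1}$ also intertwines $T_{\mathbb{C}}$ and $\widetilde T_{\mathbb{C}}$ (as $J$ commutes with both), so does $\widetilde V := \tfrac{1}{2}(V + JVJ^{-1})$; moreover $J\widetilde V = \widetilde V J$ by $J^{2}=-I$. If $\widetilde V$ happens to vanish (the anticommuting case $JV = -VJ$), one replaces $V$ by $\boldsymbol iV$, which also intertwines and now satisfies $J(\boldsymbol iV) = -\boldsymbol i JV = \boldsymbol i VJ = (\boldsymbol iV)J$. More generally, working with $V_\theta := e^{\boldsymbol i\theta}V$ yields the symmetrization $\widetilde V_\theta = \cos\theta\cdot V_{+} + \boldsymbol i\sin\theta\cdot V_{-}$, where $V = V_{+}+V_{-}$ is the splitting into $J$-commuting and $J$-anticommuting parts. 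For a suitable $\theta$, $\widetilde V_\theta$ is injective with dense range, and its polar decomposition $\widetilde V_\theta = W|\widetilde V_\theta|$ delivers the desired complex unitary $W$: the positive factor $|\widetilde V_\theta|^{2}=\widetilde V_\theta^{\ast}\widetilde V_\theta$ commutes with $T_{\mathbb C}$ (from $\widetilde V_\theta^{\ast}\widetilde T_{\mathbb C}=T_{\mathbb C}\widetilde V_\theta^{\ast}$, the adjoint of the intertwining relation) and with $J$ (by functional calculus), and hence $W$ inherits both the intertwining and $J$-commutation properties.

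The hard part is guaranteeing the existence of a $\theta$ for which $\widetilde V_\theta$ is injective with dense range; this is where the hypothesis $T,\widetilde T\in B_1^{s}(\Omega_{q})$ enters essentially, since $\dim_{\mathbb C}\ker(T_{\mathbb C}-\omega)=1$ for $\omega\in\Omega_{red}$ forces the unitary commutant of $T_{\mathbb{C}}$ on each generalized eigenspace to be of $H^{\infty}$-type, rich enough that a rotation in $\theta$ escapes any degenerate configuration. A more direct alternative is to deduce the conclusion via the canonical matrix representations: fix $\omega_0\in\Omega_{red}$, take right holomorphic frames $\gamma,\widetilde\gamma$ of $E_T, E_{\widetilde T}$, write $V\gamma_{\mathbb C}(\omega) = f(\omega)\widetilde\gamma_{\mathbb C}(\omega)$ for a holomorphic $f$, replace $\widetilde\gamma$ by $\widetilde\gamma f$ via Lemma~\ref{FR} so that $V\gamma_{\mathbb C}^{(k)}(\omega_0) = (\widetilde\gamma f)_{\mathbb C}^{(k)}(\omega_0)$, then exploit the decomposition $\langle \boldsymbol{x}, \boldsymbol{y}\rangle_{\mathbb H} = \langle \boldsymbol{x}_{\mathbb C}, \boldsymbol{y}_{\mathbb C}\rangle_{\mathbb C} + \boldsymbol j\langle \boldsymbol{x}_{\mathbb C}, J\boldsymbol{y}_{\mathbb C}\rangle_{\mathbb C}$ to match the quaternionic Gram--Schmidt expansions, and conclude via Theorem~\ref{CbyC} that $N(\omega_0)$ and $\widetilde N(\omega_0)$ coincide up to an $\mathrm{ad}_{e^{\boldsymbol i\theta}}$ conjugation.
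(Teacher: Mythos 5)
Your forward direction coincides with the paper's and is correct, and your framing of the converse via the quaternionic structure $J$ (a bounded complex operator on $(\mathcal H_q)_{\mathbb C}$ is the complexification of a right quaternionic operator iff it commutes with $J$, so one must upgrade the given complex unitary intertwiner $V$ to a $J$-commuting one) is a genuinely different and attractive way to organize the problem. However, neither of your two routes closes the argument at the decisive point. In the symmetrization route everything hinges on producing a $\theta$ for which $\widetilde V_\theta=\tfrac12\bigl(e^{\boldsymbol i\theta}V+e^{-\boldsymbol i\theta}JVJ^{-1}\bigr)$ is injective with dense range, and your justification (``the unitary commutant \dots is of $H^{\infty}$-type, rich enough that a rotation in $\theta$ escapes any degenerate configuration'') is an assertion, not a proof. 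Note also that the most tempting way to verify it --- writing $V=W_0C$ with $W_0$ a $J$-commuting unitary intertwiner and $C$ in the unitary commutant of $T_{\mathbb C}$ --- is circular, because $W_0$ is exactly the object you are trying to construct.

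The non-circular way to control $\widetilde V_\theta$ is to compare $JVJ^{-1}$ with $V$ on the one-dimensional spaces $\ker(T_{\mathbb C}-\omega)$, $\omega\in\Omega_{red}$: both map this space onto $\ker(\widetilde T_{\mathbb C}-\omega)$ isometrically, so their ratio is a unimodular holomorphic scalar function of $\omega$, hence a constant $e^{\boldsymbol i\theta_0}$, and the correct rotation is the half-angle $\theta=-\theta_0/2$. This computation is precisely the content of the paper's proof --- there phrased as: $W(\gamma(\omega)\boldsymbol j)_{\mathbb C}$ and $(\widetilde\gamma(\omega)\boldsymbol j)_{\mathbb C}$ differ by a holomorphic $f$ with $|f|\equiv 1$ by unitarity of $W$, so $f\equiv e^{\boldsymbol i\theta_0}$, and the quaternionic unitary is defined by $U\gamma=\widetilde\gamma\cdot e^{-\boldsymbol i\theta_0/2}$ --- and it is exactly the step your proposal omits in both routes. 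Your ``more direct alternative'' cites the right ingredients (Lemma \ref{FR}, Theorem \ref{CbyC}) but likewise stops short: matching the canonical matrix representations requires controlling not only $W\gamma_{\mathbb C}$ but also $W(\gamma\boldsymbol j)_{\mathbb C}$, and extracting the constant phase $e^{\boldsymbol i\theta_0}$ and its square root is where the hypothesis $n=1$ and the unitarity of $W$ are actually used. So the proposal is a sound strategy with a genuine gap at its core step; once the constancy-of-phase argument is supplied, your polar-decomposition mechanism (including the commutation of $|\widetilde V_\theta|$ with $T_{\mathbb C}$ and with $J$) does go through.
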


    \begin{proof}
    Suppose that $U$ is a quaternionic unitary operator such that
    $\widetilde{T}=UTU^*$. Notice that $(U^*)_{\mathbb{C}}=(U)_{\mathbb{C}}^*$ and $(U)_{\mathbb{C}}$ is a unitary operator. Then, it follows from $(\widetilde{T})_{\mathbb{C}}=(U)_{\mathbb{C}}(T)_{\mathbb{C}}(U)_{\mathbb{C}}^*$  that $T_{\mathbb{C}}$ and $\widetilde{T}_{\mathbb{C}}$ are unitarily equivalent as bounded linear operators acting on $({\mathcal{H}_q})_\mathbb{C}$.

    Now, suppose that there is a unitary operator $W$ acting on $({\mathcal{H}_q})_\mathbb{C}$ such that $\widetilde{T}_{\mathbb{C}}=WT_{\mathbb{C}}W^*$. Given any $\omega_0\in\Omega_{red}$. Let
    $$
    \gamma(\omega)=\alpha(\omega)+\boldsymbol{j}\beta(\omega)=
    (\alpha_{1}(\omega),\ \alpha_{2}(\omega),\ \cdots)+\boldsymbol{j}(\beta_{1}(\omega),\ \beta_{2}(\omega),\cdots)
    $$
    be a non-vanishing right holomorphic cross-section of $E_T$ over a neighborhood $\Delta$ of $\omega_0$, where $\alpha(\omega)$ and $\beta(\omega)$ are complex vectors. For any $\omega\in\Delta$, one can see that
    $$
    \boldsymbol{0}=((T-\boldsymbol{I}\omega)\gamma(\omega))_{\mathbb{C}}=(T_{\mathbb{C}}-\omega)
    \begin{pmatrix}
    	\alpha(\omega) \\
    	\beta(\omega)
    	\end{pmatrix}.
    $$
    Denote
    $$
    W
    \begin{pmatrix}
    	\alpha(\omega) \\
    	\beta(\omega)
    \end{pmatrix} \\
    =\begin{pmatrix}
    	\widetilde{\alpha}(\omega) \\
    	\widetilde{\beta}(\omega)
    \end{pmatrix}.
$$
and let
    $$
    \widetilde{\gamma}(\omega)=\widetilde{\alpha}(\omega)+\boldsymbol{j}\widetilde{\beta}(\omega).
    $$
   Since
    $$\begin{aligned}
    (\widetilde{T}-\boldsymbol{I}\omega)_{\mathbb{C}}
    \begin{pmatrix}
    \widetilde{\alpha}(\omega) \\
    \widetilde{\beta}(\omega)
    \end{pmatrix}
    = & (\widetilde{T}_{{\mathbb{C}}}-\omega)W
    \begin{pmatrix}
    \alpha(\omega) \\
    \beta(\omega)
    \end{pmatrix}\\
    = & W(T_{{\mathbb{C}}}-\omega)
    \begin{pmatrix}
    \alpha(\omega) \\
    \beta(\omega)
    \end{pmatrix}\\
    =&\boldsymbol{0}
    \end{aligned},$$
    we have
    $(\widetilde{\gamma}(\omega))_{\mathbb{C}}\in\ker{(\widetilde{T}-\boldsymbol{I}\omega)_{\mathbb{C}}}$ and consequently, $\widetilde{\gamma}(\omega)\in\ker{(\widetilde{T}-\boldsymbol{I}\omega)}$.
Notice that
\[
(\gamma(\omega)\boldsymbol{j})_{\mathbb{C}}=\begin{pmatrix}
	-\overline{\beta(\omega)} \\
	\overline{\alpha(\omega)}
\end{pmatrix} \ \ \ \text{and} \ \ \ \ (\widetilde{\gamma}(\omega)\boldsymbol{j})_{\mathbb{C}}=\begin{pmatrix}
-\overline{\widetilde{\beta}(\omega)} \\
\overline{\widetilde{\alpha}(\omega)}
\end{pmatrix}.
\]
Since
\[
({T}-\boldsymbol{I}\overline{\omega})(\gamma(\omega)\boldsymbol{j})=\gamma(\omega)\omega\boldsymbol{j}-\gamma(\omega)\boldsymbol{j}\overline{\omega}=0,
\]
we have
    $$
    \begin{aligned}
  (\widetilde{T}_{{\mathbb{C}}}-\overline{\omega})W\begin{pmatrix}
-\overline{\beta(\omega)} \\
\overline{\alpha(\omega)}
\end{pmatrix}
	= & W({T}_{\mathbb{C}}-\boldsymbol{I}\overline{\omega})\begin{pmatrix}
		-\overline{\beta(\omega)} \\
		\overline{\alpha(\omega)}
	\end{pmatrix} \\
	= & W({T}-\overline{\omega})_{\mathbb{C}}(\gamma(\omega)\boldsymbol{j})_{\mathbb{C}} \\
	= & W(({T}-\boldsymbol{I}\overline{\omega})(\gamma(\omega)\boldsymbol{j}))_{\mathbb{C}} \\
	= & \boldsymbol{0}.
\end{aligned}
$$
Then, $\left( W\begin{pmatrix}
	-\overline{\beta(\omega)} \\
	\overline{\alpha(\omega)}
\end{pmatrix} \right)_q\in\ker(\widetilde{T}-\boldsymbol{I}\overline{\omega})$. Since
\[
(\widetilde{T}-\boldsymbol{I}\overline{\omega})(\widetilde{\gamma}(\omega)\boldsymbol{j})=0,
\]
it follows from Lemma \ref{FR} that there is a complex holomorphic function $f$ on $\Delta$ such that
    $$
    W\begin{pmatrix}
    	-\overline{\beta(\omega)} \\
    	\overline{\alpha(\omega)}
    \end{pmatrix}
    =\begin{pmatrix}
    	-\overline{\widetilde{\beta}(\omega)f(\omega)} \\
    	\overline{\widetilde{\alpha}(\omega)f(\omega)}
    \end{pmatrix}.
$$
    Since $W$ is a unitary operator, one can see that
    \[
     \left\|\begin{pmatrix}
     	-\overline{\beta(\omega)} \\
     	\overline{\alpha(\omega)}
     \end{pmatrix} \right\|=\left\|W\begin{pmatrix}
     -\overline{\beta(\omega)} \\
     \overline{\alpha(\omega)}
 \end{pmatrix} \right\|=\left\|\begin{pmatrix}
 -\overline{\widetilde{\beta}(\omega)f(\omega)} \\
 \overline{\widetilde{\alpha}(\omega)f(\omega)}
\end{pmatrix} \right\|=\left\|\begin{pmatrix}
-\overline{\beta(\omega)} \\
\overline{\alpha(\omega)}
\end{pmatrix} \right\|\cdot|f(\omega)|,
    \]
  and consequently for every $\omega\in\Delta$,
    \[
    |f(\omega)|=1.
    \]
   Then, there exists $\theta_0\in[0,\ 2\pi)$ such that
    $f(\omega)=\mathrm{e}^{\boldsymbol{i}\theta_0}$ for every $\omega\in\Delta$. Define the linear operator $V:(\mathcal{H}_q)_{\mathbb{C}}\rightarrow(\mathcal{H}_q)_{\mathbb{C}}$ by
    $$
    V
    \begin{pmatrix}
    \alpha^{(l)}(\omega_0) \\
    \beta^{(l)}(\omega_0) \\
    \end{pmatrix}
    =\begin{pmatrix}
    \widetilde{\alpha}^{(l)}(\omega_0)\mathrm{e}^{-\boldsymbol{i}\theta_0/2} \\
    \widetilde{\beta}^{(l)}(\omega_0)\mathrm{e}^{-\boldsymbol{i}\theta_0/2} \\
    \end{pmatrix},\
    V
    \begin{pmatrix}
    -\overline{\beta^{(l)}(\omega_0)} \\
    \overline{\alpha^{(l)}(\omega_0)} \\
    \end{pmatrix}
    =\begin{pmatrix}
    -\overline{\widetilde{\beta}^{(l)}(\omega_0)}\mathrm{e}^{\boldsymbol{i}\theta_0/2} \\
    \overline{\widetilde{\alpha}^{(l)}(\omega_0)}\mathrm{e}^{\boldsymbol{i}\theta_0/2} \\
    \end{pmatrix}
    $$
    for any $l\in\mathbb{N}$. It is not difficult to see that $VT_{\mathbb{C}}=\widetilde{T}_{\mathbb{C}}V$.

  For every $k,\ l\in\mathbb{N}$, we have
     \begin{equation*}
 	\begin{aligned}
 		\langle V
 		\begin{pmatrix}
 			\alpha^{(l)}(\omega_0) \\
 			\beta^{(l)}(\omega_0)
 		\end{pmatrix},\
 		V
 		\begin{pmatrix}
 			\alpha^{(k)}(\omega_0) \\
 			\beta^{(k)}(\omega_0)
 		\end{pmatrix}
 		\rangle
 		= & \langle
 		\begin{pmatrix}
 			\widetilde{\alpha}^{(l)}(\omega_0)\mathrm{e}^{-\boldsymbol{i}\theta_0/2} \\
 			\widetilde{\beta}^{(l)}(\omega_0)\mathrm{e}^{-\boldsymbol{i}\theta_0/2} \\
 		\end{pmatrix},\
 		\begin{pmatrix}
 			\widetilde{\alpha}^{(k)}(\omega_0)\mathrm{e}^{-\boldsymbol{i}\theta_0/2} \\
 			\widetilde{\beta}^{(k)}(\omega_0)\mathrm{e}^{-\boldsymbol{i}\theta_0/2} \\
 		\end{pmatrix}
 		\rangle \\
 		= &
 		\langle
 		\begin{pmatrix}
 			\widetilde{\alpha}^{(l)}(\omega_0) \\
 			\widetilde{\beta}^{(l)}(\omega_0)\\
 		\end{pmatrix},\
 		\begin{pmatrix}
 			\widetilde{\alpha}^{(k)}(\omega_0) \\
 			\widetilde{\beta}^{(k)}(\omega_0) \\
 		\end{pmatrix}
 		\rangle \\
 		= & \langle
 		W\begin{pmatrix}
 			\alpha^{(l)}(\omega_0) \\
 			\beta^{(l)}(\omega_0)
 		\end{pmatrix},\
 		W\begin{pmatrix}
 			\alpha^{(k)}(\omega_0) \\
 			\beta^{(k)}(\omega_0)
 		\end{pmatrix}
 		\rangle \\
 		= & \langle
 		\begin{pmatrix}
 			\alpha^{(l)}(\omega_0) \\
 			\beta^{(l)}(\omega_0)
 		\end{pmatrix},\
 		\begin{pmatrix}
 			\alpha^{(k)}(\omega_0) \\
 			\beta^{(k)}(\omega_0)
 		\end{pmatrix}
 		\rangle,
 	\end{aligned}
 \end{equation*}

    \begin{equation*}
    \begin{aligned}
     \langle V
    \begin{pmatrix}
    -\overline{\beta^{(l)}(\omega_0)} \\
    \overline{\alpha^{(l)}(\omega_0)}
    \end{pmatrix},\
    V
   \begin{pmatrix}
   	-\overline{\beta^{(k)}(\omega_0)} \\
   	\overline{\alpha^{(k)}(\omega_0)}
   \end{pmatrix}
    \rangle
    = &      \langle
    \begin{pmatrix}
    	-\overline{\widetilde{\beta}^{(l)}(\omega_0)}\mathrm{e}^{\boldsymbol{i}\theta_0/2} \\
    	\overline{\widetilde{\alpha}^{(l)}(\omega_0)}\mathrm{e}^{\boldsymbol{i}\theta_0/2}
    \end{pmatrix},\
      \begin{pmatrix}
     	-\overline{\widetilde{\beta}^{(k)}(\omega_0)}\mathrm{e}^{\boldsymbol{i}\theta_0/2} \\
     	\overline{\widetilde{\alpha}^{(k)}(\omega_0)}\mathrm{e}^{\boldsymbol{i}\theta_0/2}
     \end{pmatrix}
    \rangle \\
    = &
    \langle
    \begin{pmatrix}
    	-\overline{\widetilde{\beta}^{(l)}(\omega_0)}\mathrm{e}^{\boldsymbol{i}\theta_0} \\
    	\overline{\widetilde{\alpha}^{(l)}(\omega_0)}\mathrm{e}^{\boldsymbol{i}\theta_0}
    \end{pmatrix},\
    \begin{pmatrix}
    	-\overline{\widetilde{\beta}^{(k)}(\omega_0)}\mathrm{e}^{\boldsymbol{i}\theta_0} \\
    	\overline{\widetilde{\alpha}^{(k)}(\omega_0)}\mathrm{e}^{\boldsymbol{i}\theta_0}
    \end{pmatrix}
    \rangle \\
    = & \langle W
    \begin{pmatrix}
    	-\overline{\beta^{(l)}(\omega_0)} \\
    	\overline{\alpha^{(l)}(\omega_0)}
    \end{pmatrix},\
    W
    \begin{pmatrix}
    	-\overline{\beta^{(k)}(\omega_0)} \\
    	\overline{\alpha^{(k)}(\omega_0)}
    \end{pmatrix}
    \rangle \\
     = & \langle
     \begin{pmatrix}
     	\alpha^{(l)}(\omega_0) \\
     	\beta^{(l)}(\omega_0)
     \end{pmatrix},\
       \begin{pmatrix}
     	\alpha^{(k)}(\omega_0) \\
     	\beta^{(k)}(\omega_0)
     \end{pmatrix}
     \rangle,
    \end{aligned}
    \end{equation*}
and
     \begin{equation*}
	\begin{aligned}
		\langle V
		\begin{pmatrix}
			\alpha^{(l)}(\omega_0) \\
			\beta^{(l)}(\omega_0)
		\end{pmatrix},\
		 V
		\begin{pmatrix}
			-\overline{\beta^{(k)}(\omega_0)} \\
			\overline{\alpha^{(k)}(\omega_0)}
		\end{pmatrix}
		\rangle
		= & \langle
		\begin{pmatrix}
			\widetilde{\alpha}^{(l)}(\omega_0)\mathrm{e}^{-\boldsymbol{i}\theta_0/2} \\
			\widetilde{\beta}^{(l)}(\omega_0)\mathrm{e}^{-\boldsymbol{i}\theta_0/2} \\
		\end{pmatrix},\
		\begin{pmatrix}
			-\overline{\widetilde{\beta}^{(k)}(\omega_0)}\mathrm{e}^{\boldsymbol{i}\theta_0/2} \\
			\overline{\widetilde{\alpha}^{(k)}(\omega_0)}\mathrm{e}^{\boldsymbol{i}\theta_0/2}
		\end{pmatrix}
		\rangle \\
		= &
		\langle
		\begin{pmatrix}
			\widetilde{\alpha}^{(l)}(\omega_0) \\
			\widetilde{\beta}^{(l)}(\omega_0)\\
		\end{pmatrix},\
		\begin{pmatrix}
			-\overline{\widetilde{\beta}^{(k)}(\omega_0)}\mathrm{e}^{\boldsymbol{i}\theta_0} \\
			\overline{\widetilde{\alpha}^{(k)}(\omega_0)}\mathrm{e}^{\boldsymbol{i}\theta_0}
		\end{pmatrix}
		\rangle \\
		= & \langle
		W\begin{pmatrix}
			\alpha^{(l)}(\omega_0) \\
			\beta^{(l)}(\omega_0)
		\end{pmatrix},\
		W\begin{pmatrix}
			-\overline{\beta^{(k)}(\omega_0)} \\
			\overline{\alpha^{(k)}(\omega_0)}
		\end{pmatrix}
		\rangle \\
		= & \langle
		\begin{pmatrix}
			\alpha^{(l)}(\omega_0) \\
			\beta^{(l)}(\omega_0)
		\end{pmatrix},\
		\begin{pmatrix}
			-\overline{\beta^{(k)}(\omega_0)} \\
			\overline{\alpha^{(k)}(\omega_0)}
		\end{pmatrix}
		\rangle,
	\end{aligned}
\end{equation*}

   Then, $V$ is an isometry. Following from
   \[
   {\bigvee}_{\mathbb{C}}\{\widetilde{\gamma}^{(i)}(\omega_0); i\in\mathbb{N}\ \}=\mathcal{H}_q,
   \]
   $V$ is surjective ahd hence $V$ is a unitary operator satisfying $\widetilde{T}_{\mathbb{C}}=VT_{\mathbb{C}}V^*$.

Define the linear operator $U:\mathcal{H}_q\rightarrow\mathcal{H}_q$ by for any $\omega\in\Delta$,
\[
U(\gamma(\omega))=\widetilde{\gamma}(\omega)\cdot \mathrm{e}^{-\boldsymbol{i}\theta_0/2}.
\]
Notice that for any $l\in\mathbb{N}$,
\[
U_{\mathbb{C}}
\begin{pmatrix}
	\alpha^{(l)}(\omega_0) \\
	\beta^{(l)}(\omega_0) \\
\end{pmatrix}
=(U\gamma^{(l)}(\omega_0))_{\mathbb{C}}=(\widetilde{\gamma}^{(l)}(\omega_0)\mathrm{e}^{-\boldsymbol{i}\theta_0/2})_{\mathbb{C}}=\begin{pmatrix}
	\widetilde{\alpha}^{(l)}(\omega_0)\mathrm{e}^{-\boldsymbol{i}\theta_0/2} \\
	\widetilde{\beta}^{(l)}(\omega_0)\mathrm{e}^{-\boldsymbol{i}\theta_0/2} \\
\end{pmatrix},
\]
and
\[
U_{\mathbb{C}}
\begin{pmatrix}
	-\overline{\beta^{(l)}(\omega_0)} \\
	\overline{\alpha^{(l)}(\omega_0)} \\
\end{pmatrix}
=(U\gamma^{(l)}(\omega_0)\boldsymbol{j})_{\mathbb{C}}=(\widetilde{\gamma}^{(l)}(\omega_0)\mathrm{e}^{-\boldsymbol{i}\theta_0/2}\boldsymbol{j})_{\mathbb{C}}=\begin{pmatrix}
	-\overline{\widetilde{\beta}^{(l)}(\omega_0)}\mathrm{e}^{\boldsymbol{i}\theta_0/2} \\
	\overline{\widetilde{\alpha}^{(l)}(\omega_0)}\mathrm{e}^{\boldsymbol{i}\theta_0/2} \\
\end{pmatrix}.
\]
Then, $U_{\mathbb{C}}=V$ and hence $U$ is a quaternionic unitary operator with $\widetilde{T}=UTU^*$. This finishes the proof.
\end{proof}

\section*{Statements and Declarations}

\begin{itemize}
	\item Funding 
	
	The second author was partially supported by National Natural Science Foundation of China (Grant No. 12471120). The third author was partially supported by National Natural Science Foundation of China (Grant No. 12371129).
	\item Conflict of interest/Competing interests 
	
	The authors declare that there is no conflict of interest or competing interest.
	\item Ethics approval and consent to participate
	
	Not applicable.
	\item Author contribution 
	
	All authors contributed equally to this work.
\end{itemize}

\noindent

%%===========================================================================================%%
%% If you are submitting to one of the Nature Portfolio journals, using the eJP submission   %%
%% system, please include the references within the manuscript file itself. You may do this  %%
%% by copying the reference list from your .bbl file, paste it into the main manuscript .tex %%
%% file, and delete the associated \verb+\bibliography+ commands.                            %%
%%===========================================================================================%%

\end{document}